\newenvironment{theorem}[1]
{\vskip 2mm\noindent \textbf{Theorem #1}  \it}{\vskip 2mm}
\newenvironment{corollary}[1]
{\vskip 2mm\noindent \textbf{Corollary #1}  \it}{\vskip 2mm}
\newtheorem{defn}{Definition}[section]
\newtheorem{thm}[defn]{Theorem}
\newtheorem{prop}[defn]{Proposition}
\newtheorem{lem}[defn]{Lemma}
\newtheorem{cor}[defn]{Corollary}
\newtheorem{rem}[defn]{Remark}
\newtheorem{note}[defn]{Constraints}
\newcommand{\proof}{\vskip 2mm \noindent {\textsc{Proof : }}\rm}
\newcommand{\fin}{\hfill{\Large$\Box$}\\}
\newcommand{\finsec}{\hfill{\Large$\Box$}}
\newcommand{\al}{\alpha}
\newcommand{\aire}{Area \,}
\newcommand{\si}{\sigma}
\newcommand{\om}{\omega}\newcommand{\epsi}{\epsilon}
\newcommand{\C}{\mathbb {C}}
\newcommand{\R}{\mathbb {R}}
\newcommand{\N}{\mathbb {N}}
\newcommand{\Z}{\mathbb {Z}}
\newcommand{\Pj}{\mathbb {P}}
\newcommand{\cdb}{\textsf {Card }}
\newcommand{\orb}{\textsf {O}}
\newcommand{\Id}{{\rm Id}}
\newcommand{\Lip}{{\rm Lip \, }}
\newcommand{\Jac}{{\rm Jac \,  }}
\newcommand{\supp}{{\rm supp \, }}
\newcommand{\D}{\mathbb {D}}
\def\abs#1{\vert #1\vert}
\def\RRR{{\mathfrak R}}
\def\EE{{\cal E}}
\def\AA{{\cal A}}
\def\CC{{\cal C}}
\def\PP{{\cal P}}
\def\KK{{\cal K}}
\def\FF{{\cal F}}
\def\RR{{\cal R}}
\def\LL{{\cal L}}
\def\MM{{\cal M}}
\def\WW{{\cal W}}
\def\com{\ar@{}[rd]|{\circlearrowleft}}
\title {A distortion theorem for the iterated inverse branches of a holomorphic endomorphism of $\Pj^k$}
\author{Fran\c cois Berteloot and Christophe Dupont}
\date{ \today }
\begin{document}

\maketitle
 
\begin{abstract} We linearize the inverse branches of the iterates of holomorphic endomorphisms of $\Pj^k$ and thus overcome the lack of Koebe distortion theorem
in this setting when $k\ge 2$. We review several applications of this result in holomorphic dynamics.\\

 MSC 2010: 37F10, 37G05 , 32H50. \\

Key Words: Linearization, Lyapunov exponents
\end{abstract}

\section{Introduction}

Let $f : \Pj^k \to \Pj^k$ be a holomorphic endomorphism of algebraic degree $d \geq 2$. This is a ramified covering of $\Pj^k$ of  degree $d^k$.  The equilibrium measure $\mu$ of $f$  is a mixing $f$-invariant measure, see  \cite[Section 1.3]{DS}. The Lyapunov exponents of $(f,\mu)$ are  larger than or equal to $\log \sqrt d$, see \cite{BD} or \cite[Section 1.7]{DS}. Setting $J:=\textrm{Supp}\;\mu$, we  thus have a non-uniform hyperbolic dynamical system $(J,f,\mu)$. 

The aim of the present article is to provide a substitute to the Koebe distortion theorem, which is only valid for $k=1$, to control the geometry of the iterated inverse branches of $f$ on $\Pj^k$. Our proof is based on a normalization of $f$ along $\mu$-typical backward orbits. Although it should face a major difficulty  due to resonances on the Lyapunov spectrum, we use a simple trick to get rid of all possible resonances and stay in the setting of linear normalizations.

Our Distortion Theorem allows to skip the use of delicate results on the Lie group structure of resonant maps, see \cite{GK, JV,KS, BDM}. This is a typical feature of our approach. At the end of the article we review various applications which illustrate how it can be used. \\

Let us now introduce our framework. To deal with inverse branches, we introduce the natural extension of $f$. Let $$\orb := \{ \hat x = (x_n)_{n \in \Z} \, \colon \,  x_{n+1} = f (x_n) \} $$ be the set of orbits of $f$ and let $\tau : \orb \to \orb$ be the right shift sending 
  $(\cdots,x_{-1},x_0,x_1,\cdots)$ to
$(\cdots,x_{-2},x_{-1},x_0,\cdots)$. We say that a function $\phi_\epsi : \orb \to ]0,1]$ (resp. $\orb \to [1,+\infty[$) is
    \emph{$\epsi$-slow} (resp.  \emph{$\epsi$-fast}) if 
    $$\forall \hat x \in \orb \ , \  e^{- \epsilon} \phi_\epsi (\hat x) \leq \phi_\epsi(\tau(\hat x)) \leq e^{
      \epsilon} \phi_\epsi (\hat x) . $$
Similarly, a sequence $(\delta_n)_{n \in \Z}$ in $]0,1]$ is \emph{$\epsilon$-slow} if $e^{-\epsilon} \delta_n \leq \delta_{n+1} \leq e^{\epsilon} \delta_n$ for every $n \in \Z$. Let $\hat \mu$ be the unique $\tau$-invariant measure on $\orb$ satisfying $\pi_*\hat \mu = \mu$, where $\pi(\hat x) := x_0$,  see  \cite[Chapter 10]{CFS}. The measure $\hat \mu$ is mixing as $\mu$. Since the equilibrium measure $\mu$ is a Monge-Amp\`ere mass with bounded local potentials, it does not give mass to the critical set $\CC$ of $f$. In particular, the $\tau$-invariant subset 
$$X := \{ \hat x = (x_n)_{n
  \in \Z} \, \colon \,  x_n \notin \CC \ , \
\forall n \in \Z \}  $$ 
satisfies $\hat \mu(X)=1$. For every $\hat x \in X$, we denote $f^{-n}_{\hat x}$ the inverse branch of $f^n$ which sends $x_0$ to $x_{-n}$, it is defined in a neighbourhood of $x_0$. We denote $dist$ the distance on $\Pj^k$ induced by the Fubini-Study metric and $B_x(r)$ the ball centered at $x$ of radius $r$.

\begin{theorem}{A :}\label{thmA} 
Let $f$ be a holomorphic  endomorphism of $\Pj^k$ and let $\mu$ be its equilibrium measure. Let $\Lambda_l < \cdots < \Lambda_1$ be the distinct Lyapunov  exponents of $(f,\mu)$ and let  $ k_j \ge 1$ be their multiplicities.

\noindent Then for every  $\epsilon  < \gamma \ll \Lambda_l $ and for   $\hat \mu$-almost every $\hat x \in X$, there exist

- an integer $n_{\hat x} \geq 1$ and real numbers $h_{\hat x}\ge 1$ and  $0 < r_{\hat x} , \rho_{\hat x} \le 1$,

- a sequence $(\varphi_{\hat x,n})_{n \geq 0}$ of injective holomorphic maps 

$$ \varphi_{\hat x,n} : B_{x_{-n}} (r_{\hat x} e^{-n(\gamma + 2\epsilon)}) \to \D^k(\rho_{\hat x} e^{n \epsi}) $$ 

sending
$x_{-n}$ to $0$  and satisfying

$$  e^{n( \gamma - 2\epsi )} dist(u,v) \leq \vert  \varphi_{\hat x,n} (u) - \varphi_{\hat x,n} (v)   \vert  \leq  e^{n (\gamma + 3 \epsi)} \, h_{\hat x} \, dist(u,v) , $$
  
- a sequence of linear maps  $(D_{\hat x,n})_{n \geq 0}$ which stabilize each
$$L_j := \{ 0 \} \times \cdots \times \C^{k_j}  \times \cdots \times \{0 \} , $$ 

satisfying

  \[  \forall v \in L_j  \ \ ,Ê\ \ e^{- n\Lambda_j    + n (\gamma - \epsilon) } \abs{v}
\leq \abs{D_{\hat x,n}(v)}  \leq e^{- n \Lambda_j + n (\gamma +  \epsilon) }  \abs{v} , \]

 for which the following diagram commutes for every $n \geq n_{\hat x}$:

\[  \xymatrix{
  B_{x_0}(r_{\hat x}) \ar[rrr]^{ f^{-n}_{\hat x}  } \ar[d]_{\varphi_{\hat x , 0}}& & &
      B_{x_{-n}}(r_{\hat x} e^{-n(\gamma + 2 \epsilon)}) \ar[d]^{\varphi_{\hat x , n}}     \\
      \D^k(\rho_{\hat x})  \ar[rrr]^{ D_{\hat x,n} }  & & & \D^k (\rho_{\hat x} e^{n\epsi}) . }   \]
Moreover, the fonctions  $\hat x \mapsto h^{-1}_{\hat x}, r_{\hat x}, \rho_{\hat x}$ are measurable and $\epsilon$-slow on $X$.\\

\end{theorem}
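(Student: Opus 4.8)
The strategy is to combine Oseledets' multiplicative ergodic theorem with a resonance-avoiding linearization of the inverse branches along $\hat\mu$-typical backward orbits. First I would apply the Oseledets theorem to the cocycle $n \mapsto D f^n$ over $(\orb, \tau, \hat\mu)$: for $\hat\mu$-almost every $\hat x$ this produces a measurable splitting $T_{x_0}\Pj^k = \bigoplus_j E_j(\hat x)$ into Oseledets subspaces with $\dim E_j = k_j$, along which the forward iterates of $Df$ expand at rate $\Lambda_j$ and the inverse branches $Df^{-n}_{\hat x}$ contract at rate $-\Lambda_j$. All the quantitative estimates in the statement (the bounds on $D_{\hat x,n}$, the size of the domains $r_{\hat x} e^{-n(\gamma+2\epsilon)}$, the target radii $\rho_{\hat x} e^{n\epsilon}$) have the shape ``true rate $\pm$ an $\epsilon$-small error with an $\epsilon$-slow multiplicative constant'': this is exactly the content of the Lyapunov charts / tempered-function machinery, so the slow functions $h_{\hat x}, r_{\hat x}, \rho_{\hat x}$ arise as the usual tempered regularizations of the Oseledets constants, and their measurability and $\epsilon$-slowness are built in from the start.

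The key point — and the ``simple trick to get rid of all possible resonances'' advertised in the introduction — is how the maps $\varphi_{\hat x,n}$ are constructed so that the conjugated dynamics $\varphi_{\hat x,n}\circ f^{-n}_{\hat x}\circ \varphi_{\hat x,0}^{-1}$ is \emph{exactly linear} rather than merely linear-plus-resonant-polynomial. Here I would choose a coordinate chart $\psi_{\hat x}$ at $x_0$ adapted to the Oseledets splitting (sending $E_j(\hat x)$ to $L_j$) and, crucially, absorb the factor $e^{n\gamma}$ scaling: work with the renormalized inverse branches $g_n := e^{n\gamma}\cdot\big(\text{chart at }x_{-n}\big)\circ f^{-n}_{\hat x}\circ\big(\text{chart at }x_0\big)^{-1}$. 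Because every Lyapunov exponent $\Lambda_j$ is $\gg\gamma$, after this renormalization the linear parts $D_{\hat x,n}$ are \emph{strong contractions} on every factor $L_j$ — the bounds $e^{-n\Lambda_j+n(\gamma\pm\epsilon)}$ all tend to $0$ — so there are no small-denominator/resonance obstructions at all: any would-be resonant monomial of degree $\ge 2$ is killed because a product of at least two contraction eigenvalues beats a single one. Consequently a Sternberg-type normalization (solving the cohomological equation term by term, or simply taking a limit of the conjugating maps) linearizes $g_n$ on a definite polydisc, and rescaling back by $e^{-n\gamma}$ yields the $\varphi_{\hat x,n}$ with the stated distortion inequalities.

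Concretely the steps are: (1) invoke Oseledets to get the splitting and the tempered/$\epsilon$-slow comparison functions; (2) fix Lyapunov charts realizing the splitting as the $L_j$ and producing the linear maps $D_{\hat x,n}$ with the claimed two-sided bounds; (3) renormalize the inverse branches by $e^{n\gamma}$ and observe that all linear parts become uniform contractions since $\gamma\ll\Lambda_l$, so the nonlinear remainders are controlled on a polydisc of $\hat x$-dependent but $\epsilon$-slow radius; (4) run the resonance-free normalization to straighten $f^{-n}_{\hat x}$ to $D_{\hat x,n}$, getting the commuting diagram for $n\ge n_{\hat x}$; (5) track radii through the construction to pin down the domains $B_{x_{-n}}(r_{\hat x}e^{-n(\gamma+2\epsilon)})$ and targets $\D^k(\rho_{\hat x}e^{n\epsilon})$, and read off the Lipschitz-type bounds on $\varphi_{\hat x,n}$; (6) check measurability and $\epsilon$-slowness of $h_{\hat x}^{-1},r_{\hat x},\rho_{\hat x}$, which is automatic from the tempered-function construction.

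The main obstacle I anticipate is Step (5) together with the bookkeeping in Step (3): one must propagate the estimates \emph{uniformly in $n$} along the whole backward orbit, and the chart at $x_{-n}$ may itself distort by an $e^{n\epsilon}$-type factor (this is why the target polydisc has radius $\rho_{\hat x}e^{n\epsilon}$ and the source ball shrinks like $e^{-n(\gamma+2\epsilon)}$). Getting the exponents to match the precise constants $\gamma\pm 2\epsilon$, $\gamma+3\epsilon$ in the statement requires carefully separating the genuine contraction rate $e^{-n\gamma}$ from the slow errors and from the distortion of the non-linear change of coordinates, and verifying that $n_{\hat x}$ can be chosen so that the normalization converges and the diagram commutes for \emph{all} $n\ge n_{\hat x}$ simultaneously. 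The resonance-avoidance itself, by contrast, should be painless once the $e^{n\gamma}$-renormalization is in place.
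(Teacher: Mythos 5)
Your overall architecture coincides with the paper's: an Oseledec--Pesin $\epsilon$-reduction of the bundle map generated by the inverse branches, followed by an $e^{n\gamma}$-rescaling of the chain and a normal-form linearization along the bi-infinite orbit, with tempered ($\epsilon$-slow/fast) functions supplying $h_{\hat x}, r_{\hat x}, \rho_{\hat x}$. However, there is a genuine gap at the central point, namely your justification of why the rescaled chain is resonance-free. You assert that after multiplying by $e^{n\gamma}$ the linear parts are strong contractions and therefore ``any would-be resonant monomial of degree $\ge 2$ is killed because a product of at least two contraction eigenvalues beats a single one.'' This is false: a resonance is an equality $\al_1\lambda_1+\cdots+\al_k\lambda_k=\Lambda_j$ with $\abs{\al}\ge 2$, and such equalities occur perfectly well among contractions (e.g.\ rates $e^{-1}$ and $e^{-2}$ give $(e^{-1})^2=e^{-2}$); your ``beats a single one'' claim would require $2\Lambda_l>\Lambda_1$, which need not hold. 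Contractivity only guarantees that the set of \emph{potentially} resonant multi-indices is finite ($\abs{\al}\le[\Lambda_1^\gamma/\Lambda_l^\gamma]$); it does not empty it. Without ruling out these finitely many resonances you can only reach a Poincar\'e--Dulac polynomial normal form, not the linear maps $D_{\hat x,n}$ of the statement.

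The actual mechanism in the paper is different and is worth isolating: the rescaling replaces each exponent $\Lambda_j$ by $\Lambda_j^\gamma=\Lambda_j-\gamma$ \emph{uniformly}, and since the two sides of a resonance relation are shifted by different amounts ($\abs{\al}\gamma$ versus $\gamma$), an exact resonance $\al\cdot\lambda=\Lambda_j$ becomes $\al\cdot\lambda^\gamma-\Lambda_j^\gamma=-(\abs{\al}-1)\gamma\le-\gamma\neq 0$, while non-resonant indices stay at distance $>a/2$ from zero provided $\gamma([\Lambda_1^\gamma/\Lambda_l^\gamma]-1)<a/2$. This yields the uniform gap $b$ that makes the homological equations solvable with convergent series. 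A secondary point your sketch glosses over: those equations are solved by summing over the \emph{future} or the \emph{past} of the orbit according to the sign of $\al\cdot\lambda^\gamma-\Lambda_j^\gamma$, which is why the construction genuinely needs the bi-infinite sequence $(W_n)_{n\in\Z}$ and not just the backward branch; a one-sided ``term by term'' Sternberg argument does not suffice here.
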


The following corollary shows how Theorem A can be used to estimate the convexity defect of the inverse branches
$f_{\hat x}^{-n}\left(B_{x_0}(r_{\hat x})\right)$. A special case of such a property was recently put forward in \cite{BB}. 

\begin{corollary}{(Convexity defect)}\label{Cor} 
Let $f$ be a holomorphic  endomorphism of $\Pj^k$ and let $\mu$ be its equilibrium measure. We keep the notations of Theorem A. Let $r'_{\hat x}:= r_{\hat x} / h_{\hat x}$ and, for every $0<t\le 1$, let 
$$E_{\hat x}^{-n}(t):=f_{\hat x}^{-n}\left(B_{x_0}(t r'_{\hat x})\right) \subset {\widetilde E}_{\hat x}^{-n}(t):=f_{\hat x}^{-n}\left(B_{x_0}(t r_{\hat x})\right) .$$
 Then for every pair of points
$p,q\in E_{\hat x}^{-n}(t)$ there exists a smooth path  connecting $p$ and $q$ in ${\widetilde E}_{\hat x}^{-n}(t)$
and whose length is smaller than $e^{5n\epsilon} h_{\hat x} d(p,q)$.
\end{corollary}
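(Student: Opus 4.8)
The plan is to transport the problem into the ball $\D^k(\rho_{\hat x})$ via the chart $\varphi_{\hat x,0}$, where the inverse branch $f_{\hat x}^{-n}$ becomes the linear map $D_{\hat x,n}$, and then exploit convexity of balls in $\C^k$. First I would observe that, by the lower bound in the distortion estimate of Theorem A applied at scale $n=0$, the image $\varphi_{\hat x,0}(B_{x_0}(tr_{\hat x}))$ contains the ball $\D^k(tr_{\hat x})$, while by the upper bound, $\varphi_{\hat x,0}(B_{x_0}(tr'_{\hat x})) = \varphi_{\hat x,0}(B_{x_0}(tr_{\hat x}/h_{\hat x}))$ is contained in the ball $\D^k(tr_{\hat x})$ (here I use $h_{\hat x}\ge 1$ so that both balls indeed sit inside $\D^k(\rho_{\hat x})$, adjusting constants as needed, and I note $\varphi_{\hat x,0}$ sends $x_0$ to $0$). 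Thus, writing $P:=\varphi_{\hat x,0}(p)$ and $Q:=\varphi_{\hat x,0}(q)$ for $p,q\in E_{\hat x}^{-n}(t)$, the commuting diagram gives that $\varphi_{\hat x,n}(p)=D_{\hat x,n}(P)$ and $\varphi_{\hat x,n}(q)=D_{\hat x,n}(Q)$ both lie in the convex set $D_{\hat x,n}(\D^k(tr_{\hat x}))$, since $P,Q\in\D^k(tr_{\hat x})$ and $D_{\hat x,n}$ is linear.

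Next I would take the straight segment $[\varphi_{\hat x,n}(p),\varphi_{\hat x,n}(q)]$ in $\D^k(\rho_{\hat x}e^{n\epsilon})$: it lies entirely in $D_{\hat x,n}(\D^k(tr_{\hat x}))=\varphi_{\hat x,n}\big({\widetilde E}_{\hat x}^{-n}(t)\big)$ by convexity, and its Euclidean length is $|\varphi_{\hat x,n}(p)-\varphi_{\hat x,n}(q)|$. Pulling this segment back by $\varphi_{\hat x,n}^{-1}$ produces a smooth path from $p$ to $q$ inside ${\widetilde E}_{\hat x}^{-n}(t)$. To bound its length I would use the bi-Lipschitz estimate of Theorem A for $\varphi_{\hat x,n}$ on $B_{x_{-n}}(r_{\hat x}e^{-n(\gamma+2\epsilon)})$: the lower bound $e^{n(\gamma-2\epsilon)}\,dist(u,v)\le|\varphi_{\hat x,n}(u)-\varphi_{\hat x,n}(v)|$ means $\varphi_{\hat x,n}^{-1}$ contracts Fubini-Study length by at least the factor $e^{n(\gamma-2\epsilon)}$, hence the pulled-back path has length at most $e^{-n(\gamma-2\epsilon)}|\varphi_{\hat x,n}(p)-\varphi_{\hat x,n}(q)|$.

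Finally I would close the estimate by bounding $|\varphi_{\hat x,n}(p)-\varphi_{\hat x,n}(q)|$ from above in terms of $d(p,q)$. The cleanest route is to go back to the chart at level $0$: since $\varphi_{\hat x,n}(p)=D_{\hat x,n}(P)$, $\varphi_{\hat x,n}(q)=D_{\hat x,n}(Q)$, and the estimate on $D_{\hat x,n}$ gives operator-norm bound $e^{-n\Lambda_l+n(\gamma+\epsilon)}$ (the largest dilation being governed by the smallest exponent $\Lambda_l$, on $L_l$), we get $|\varphi_{\hat x,n}(p)-\varphi_{\hat x,n}(q)|\le e^{-n\Lambda_l+n(\gamma+\epsilon)}|P-Q|$; then the upper Lipschitz bound of $\varphi_{\hat x,0}$ yields $|P-Q|\le e^{3\epsilon}h_{\hat x}\,d(p,q)$ — wait, that route introduces the unwanted factor $e^{-n\Lambda_l}$, so instead I would bound $|\varphi_{\hat x,n}(p)-\varphi_{\hat x,n}(q)|$ directly by the upper Lipschitz bound of $\varphi_{\hat x,n}$, namely $|\varphi_{\hat x,n}(p)-\varphi_{\hat x,n}(q)|\le e^{n(\gamma+3\epsilon)}h_{\hat x}\,d(p,q)$, valid because $p,q\in{\widetilde E}_{\hat x}^{-n}(t)\subset B_{x_{-n}}(r_{\hat x}e^{-n(\gamma+2\epsilon)})$. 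Combining with the contraction factor $e^{-n(\gamma-2\epsilon)}$ from the previous paragraph gives a length bound of $e^{-n(\gamma-2\epsilon)}e^{n(\gamma+3\epsilon)}h_{\hat x}\,d(p,q)=e^{5n\epsilon}h_{\hat x}\,d(p,q)$, as claimed. The main obstacle I anticipate is the bookkeeping in the first paragraph: checking that the radii match up so that $\varphi_{\hat x,0}(B_{x_0}(tr'_{\hat x}))\subset\D^k(tr_{\hat x})\subset\varphi_{\hat x,0}(B_{x_0}(tr_{\hat x}))$ and that the relevant balls genuinely lie in the domains of the charts and of $f_{\hat x}^{-n}$ — everything else is convexity plus the two Lipschitz inequalities of Theorem A.
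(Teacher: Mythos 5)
Your proposal is correct and follows essentially the same route as the paper: transport $p,q$ into the convex set $D_{\hat x,n}(\D^k(tr_{\hat x}))$ via the commuting diagram, join their images by a straight segment, pull it back by $\varphi_{\hat x,n}^{-1}$, and combine the upper Lipschitz bound $e^{n(\gamma+3\epsilon)}h_{\hat x}$ of $\varphi_{\hat x,n}$ with its lower bound $e^{n(\gamma-2\epsilon)}$ to obtain $e^{5n\epsilon}h_{\hat x}\,d(p,q)$. Only two cosmetic points: you should set $P:=\varphi_{\hat x,0}(f^n(p))$ rather than $\varphi_{\hat x,0}(p)$, since $p$ lives near $x_{-n}$ while $\varphi_{\hat x,0}$ is defined near $x_0$, and the asserted equality $D_{\hat x,n}(\D^k(tr_{\hat x}))=\varphi_{\hat x,n}\bigl(\widetilde E_{\hat x}^{-n}(t)\bigr)$ should be the inclusion $\subset$, which is all your argument uses.
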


Let us finally mention that Theorem A  and its Corollary remain valid for every invariant ergodic measure $\nu$ with positive Lyapunov exponents $\Lambda_l' < \cdots < \Lambda_1'$ and, in particular,  when $\nu$ has metric entropy  $h(\nu) > \log d^{k-1}$, see \cite{dT,D2}. 

\section{Proof of Theorem A}

In this Section, we explain how to deduce Theorem A  from a linearization statement for chains of holomorphic contractions (Theorem B in Subsection \ref{contrholo}).

\subsection {The bundle map over $X$ generated by the inverse branches of $f$} \label{affchart}

We recall in Section \ref{app} basic definitions and facts concerning bundle maps. Let $(\psi_x)_{x \in \Pj^k}$ be a collection of  affine charts $\psi_x :\C^k \to \Pj^k$   with uniform bounded distortion and satisfying $\psi_x(0)=x$ (for instance fix an affine chart and rotate it thanks to unitary automorphisms of $\Pj^k$). To simplify the exposition, we shall ignore the distortions induced by these charts. For every $\hat x \in X$, let  $E_{\hat x}  := \{  \hat x \} \times \C^k$ and let $\psi_{\hat x} : E_{\hat x} \to \Pj^k$ defined by 
$$\psi_{\hat x} (\hat x, v) = \psi_{x_0} (v) .$$ 
Let $\hat f := \tau^{-1}$ be the left shift  
  $(\cdots,x_{-1},x_0,x_1,\cdots) \mapsto (\cdots,x_{0},x_{1},x_2,\cdots)$ on $X$, and let 
\[ F_{\hat
 x} := \psi^{-1}_{{\hat f(\hat x)}} \circ f \circ \psi_{{\hat x}} . \]
Since $f$ is continuous on $\Pj^k$ there exist constants $M_1 \leq M_0$ such that the bundle map 
\begin{displaymath} 
 \FF :
\begin{array}{rccc}
 &  E(M_1)          &  \longrightarrow    &         E(M_0)   \\
 &  (\hat x , v )   & \longmapsto &  \left(\hat f(\hat x), F_{\hat
 x}(v)  \right) 
\end{array}                                      
\end{displaymath}
is well defined. Note that $F_{\tau(\hat
 x)}$ is invertible near $0$ since $\hat x \in X$ implies $x_{-1} \notin \CC$.
Let  
$$ F^{-1}_{\hat x}:=  ( F_{\tau(\hat x)})^{-1} = \left( \psi^{-1}_{\hat x} \circ f \circ \psi_{\tau(\hat x)} \right)^{-1} . $$ 
The following lemma specifies the domains of definition of  these mappings.

\begin{lem}\label{abd}
For every $\epsilon >0$, there exists a $\epsilon$-slow function $\rho_\epsilon : X \to ]0,1]$ such that the bundle map
\begin{displaymath} 
 \FF^{-1} :
\begin{array}{rccc}
 &  E(\rho_\epsilon)          &  \longrightarrow    &         E(M_1)   \\
 &  (\hat x , v )   & \longmapsto &  \left( \tau(\hat x), F^{-1}_{\hat x}(v) \right)
\end{array}                                      
\end{displaymath}  
is well defined. 
\end{lem}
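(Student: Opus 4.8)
\textbf{Proof proposal for Lemma \ref{abd}.}

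The plan is to produce $\rho_\epsilon$ as the pullback along backward orbits of a fixed continuity modulus for $f$, and then to slow it down by taking a geometric minimum over the shift, in the standard Oseledets/Pesin fashion. First I would exploit the fact that $f$ is holomorphic, hence a local diffeomorphism away from $\CC$: for each point $y\notin\CC$ there is a radius $r(y)>0$ such that $f$ restricted to $B_y(r(y))$ is injective with holomorphic inverse, and this inverse maps $B_{f(y)}(s(y))$ into $B_y(r(y))$ for some $s(y)>0$. Transporting through the uniformly bounded-distortion charts $\psi_x$, this says exactly that $F^{-1}_{\hat x}=\bigl(\psi^{-1}_{\hat x}\circ f\circ\psi_{\tau(\hat x)}\bigr)^{-1}$ is defined on a ball $\D^k(r_0(\hat x))$ and takes values in $\D^k(M_1)$, where $r_0(\hat x)>0$ depends only on $x_0$ and $x_{-1}$ (through $\dist(x_{-1},\CC)$ and the local geometry of $f$ there). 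Since $\hat x\in X$ forces $x_n\notin\CC$ for all $n$, the function $r_0$ is strictly positive on $X$; after truncating at $1$ we may assume $0<r_0\le 1$. Measurability of $r_0$ is immediate from the continuity of $f$, of $\CC\mapsto\dist(\cdot,\CC)$, and of the charts.

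The function $r_0$ need not be $\epsilon$-slow, so the second step is the usual regularization: set
\[
\rho_\epsilon(\hat x):=\inf_{n\in\Z} e^{-\epsilon|n|}\, r_0(\tau^{n}(\hat x)).
\]
I would first check this infimum is positive $\hat\mu$-a.e. (indeed on all of $X$, if one only wants measurability a.e. it suffices on a full-measure set): because $\log r_0$ is finite and measurable, $\frac1n\log r_0(\tau^n\hat x)\to 0$ along the orbit by the Borel--Cantelli / sublinear-growth argument (integrability of $\log^- r_0$ is not automatic, so one truncates $r_0$ from below on a large-measure set, or invokes that for any measurable positive $g$ one has $\frac1n\log g(\tau^n\hat x)\to 0$ a.e., which follows from $g(\tau^n\hat x)/g(\tau^{n-1}\hat x)$ having no exponential drift — this is the one technical point to nail down). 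Granting the sublinear growth, the terms $e^{-\epsilon|n|}r_0(\tau^n\hat x)$ tend to $0$ only because of the $e^{-\epsilon|n|}$ decay while $r_0(\tau^n\hat x)=e^{o(n)}$, so the infimum is attained and positive. Then $\epsilon$-slowness is automatic: replacing $\hat x$ by $\tau(\hat x)$ shifts the index, and $e^{-\epsilon|n+1|}\ge e^{-\epsilon}e^{-\epsilon|n|}$ gives $\rho_\epsilon(\tau(\hat x))\ge e^{-\epsilon}\rho_\epsilon(\hat x)$, and symmetrically the upper bound. Finally $\rho_\epsilon\le r_0\le 1$, so $\FF^{-1}$ is well defined on $E(\rho_\epsilon)\subset E(r_0)$ and lands in $E(M_1)$ by construction of $r_0$; measurability of $\rho_\epsilon$ follows from that of each $\hat x\mapsto r_0(\tau^n\hat x)$ and the countable infimum.

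The main obstacle is the positivity of the infimum, i.e. controlling $r_0$ along a typical backward orbit: a priori $r_0(\tau^n\hat x)$ could decay faster than any exponential because $x_{-n}$ may approach the critical set $\CC$ arbitrarily fast. This is precisely where one uses that $\mu$ (hence $\hat\mu$) integrates $\log\dist(\cdot,\CC)$ — a consequence of $\mu$ having bounded local potentials, so $\log\dist(\cdot,\CC)\in L^1(\mu)$ — which via the Birkhoff ergodic theorem (or directly Borel--Cantelli on the sets $\{\dist(x_0,\CC)<e^{-\epsilon n}\}$, whose $\hat\mu$-measures are summable after the integrability bound) yields $\dist(x_{-n},\CC)\ge e^{-\epsilon n}$ eventually, hence $r_0(\tau^n\hat x)\ge e^{-C\epsilon n}$ for a suitable constant; absorbing $C$ into a relabelled $\epsilon$ closes the argument. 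Everything else is bookkeeping with the bounded-distortion charts, which the paper has agreed to suppress.
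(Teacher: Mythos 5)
Your overall strategy is the one the paper uses: a quantitative local inverse function theorem gives a radius $r_0(\hat x)$ on which $F^{-1}_{\hat x}$ is defined (the paper takes $c\,\vert(d_{x_{-1}}f)^{-1}\vert^{-2}$, citing Briend--Duval), the logarithm of that radius is $\hat\mu$-integrable (the paper quotes this from Dinh--Sibony; your route via $\log\dist(\cdot,\CC)\in L^1(\mu)$ plus a Lojasiewicz-type bound relating $\vert(d_{x_{-1}}f)^{-1}\vert$ to $\dist(x_{-1},\CC)$ is the standard way to prove it, though you leave that comparison implicit), and a tempering argument then produces the $\epsilon$-slow minorant (the paper outsources this step to Lemme 2.1 of [BDM], which you reprove).

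It is in that last step that there is a concrete error. With $\rho_\epsilon(\hat x)=\inf_{n\in\Z}e^{-\epsilon|n|}\,r_0(\tau^n\hat x)$ and $r_0\le 1$, every term is at most $e^{-\epsilon|n|}$, so the infimum is identically zero; this is not a harmless typo, because your justification (``the terms tend to $0$ \dots\ so the infimum is attained and positive'') is self-contradictory and shows the mechanism has not been pinned down. The correct kernel is $\rho_\epsilon(\hat x):=\inf_{n\in\Z}e^{+\epsilon|n|}\,r_0(\tau^n\hat x)$: the term $n=0$ gives $\rho_\epsilon\le r_0$, the terms tend to $+\infty$ because $r_0(\tau^n\hat x)=e^{-o(|n|)}$ by the subexponential decay you established, so the infimum is attained and positive, and the same $|n\pm 1|$ manipulation yields $\epsilon$-slowness. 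A secondary point: your parenthetical claim that $\frac1n\log g(\tau^n\hat x)\to0$ a.e.\ for \emph{any} measurable positive $g$ is false (for non-integrable $\log g$ the $\limsup$ of $\frac1n\vert\log g(\tau^n\hat x)\vert$ is a.e.\ infinite); the integrability of $\log\dist(\cdot,\CC)$, which you invoke correctly at the end via Borel--Cantelli, is genuinely needed and is precisely the input the paper imports from [DS, Theorem A.31].
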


\proof Let $t(\hat x) := \vert  ( d_{x_{-1}} f  )^{-1} \vert^{-2}$. There
exists $c >0$ depending on the first and second
derivatives of $f$ on $\Pj^k$ such that  $F^{-1}_{\hat x}$ exists on
$E_{\hat x}(c \, t(\hat x))$ (see \cite[Lemma 2]{BD}). We let $\rho :=
\min \{ c\,  t, 1 \}$. Since $\log \rho$ is $\hat \mu$-integrable (see \cite[Theorem A.31]{DS}), there exists a $\epsi$-slow function $\rho_\epsi : X \to ]0,1]$ such that $\rho_\epsi \leq \rho$ (see \cite[Lemme 2.1]{BDM}). \finsec

\subsection {Oseledec-Pesin $\epsilon$-reduction}\label{OR}

We recall that $\Lambda_l < \cdots < \Lambda_1$ denote the distinct Lyapunov exponents of $(f,\mu)$ and that $k_j \geq 1$ denote their multiplicities. For every $j \in \{ 1,\cdots, l \}$ we set 
$$\hat L_j :=  \cup_{\hat x \in X}  \{ \hat x \}  \times \left[ \{ 0 \} \times \dots \times \C^{k_j}  \times \cdots \times \{0 \} \right]  =  \cup_{\hat x \in X}  \{ \hat x \}  \times   L_j  , $$ so that $\cup_{\hat x \in X}  \{ \hat x \} \times \C^k$ is equal to $\hat L_1 \oplus \cdots \oplus \hat L_l$. The Oseledec-Pesin's theorem may be stated as follows, see \cite[Theorem S.2.10, page 666]{KH}.

\begin{thm}\label{OPR}
Let $d_0 \FF^{-1}$ be the linear part of $\FF^{-1}$. For every $\epsilon > 0$ there exist an invertible linear bundle map $\CC_\epsi$ over $\Id_X$ and a $\epsi$-fast function $h_\epsi : X \to [1,+\infty[$ such that 
\begin{enumerate}
\item the linear bundle map $\AA  := \CC_\epsi \circ d_0 \FF^{-1} \circ \CC_\epsi^{-1}$ satisfies for every $1\leq j \leq l$ :
\[ \ \AA(\hat L_j) = \hat L_j \ \ \textrm{and} \ \ \forall (\hat x,v) \in \hat L_j \ , \   e^{- \Lambda_j - \epsilon} \abs{v}
\leq \abs{A_{\hat x}(v)}  \leq e^{-\Lambda_j + \epsilon}  \abs{v} . \]

\item $\forall \hat x \in X$, $\forall v \in \C^k$, $\abs{v} \leq
\abs {C_{\epsi, \hat x} (v)}  \leq  h_\epsi(\hat x) \abs{v}$.
\end{enumerate}
\end{thm}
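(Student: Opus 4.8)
The plan is to recognize Theorem \ref{OPR} as the Oseledec--Pesin $\epsilon$-reduction of the linear cocycle generated by $d_0\FF^{-1}$ over the ergodic system $(X,\tau,\hat\mu)$, so that one may either quote \cite[Theorem S.2.10]{KH} verbatim or reconstruct the statement from Oseledec's multiplicative ergodic theorem; I will indicate the latter. First I would check the hypotheses of Oseledec's theorem for the $\C$-linear invertible cocycle $\hat x\mapsto d_0F^{-1}_{\hat x}=(d_0F_{\tau(\hat x)})^{-1}$ over $(X,\tau,\hat\mu)$ (recall $X$ is $\tau$-invariant with $\hat\mu(X)=1$ and $\hat\mu$ is mixing, hence ergodic): the function $\log^+\norm{(d_0F^{-1}_{\hat x})^{-1}}=\log^+\norm{d_{x_{-1}}f}$ is bounded by compactness of $\Pj^k$, while $\log^+\norm{d_0F^{-1}_{\hat x}}$ is $\hat\mu$-integrable because, up to the uniformly bounded distortion of the charts $\psi_x$, it is controlled by $\log^+\abs{(d_{x_{-1}}f)^{-1}}$, whose integrability is precisely what is recalled in the proof of Lemma~\ref{abd} (see \cite[Theorem A.31]{DS}). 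Oseledec's theorem then provides a $\tau$-invariant full-measure subset of $X$ together with a measurable splitting $\C^k=\hat E_1(\hat x)\oplus\cdots\oplus\hat E_l(\hat x)$ with $\dim\hat E_j=k_j$, $d_0F^{-1}_{\hat x}(\hat E_j(\hat x))=\hat E_j(\tau(\hat x))$ and Lyapunov exponent $-\Lambda_j$ along $\hat E_j$, all exponents being negative since $\Lambda_j\ge\log\sqrt d>0$.

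Next I would build an adapted (Lyapunov) Hermitian metric. Writing $(d_0F^{-1})^m_{\hat x}$, $m\in\Z$, for the iterates of the cocycle, I set, for $v,w\in\hat E_j(\hat x)$,
$$\langle v,w\rangle'_{\hat x}:=\sum_{m\in\Z}\big\langle (d_0F^{-1})^m_{\hat x}(v),\,(d_0F^{-1})^m_{\hat x}(w)\big\rangle\,e^{2m\Lambda_j-2\epsilon\abs m},$$
the series converging by the Oseledec growth estimates (the $m\to+\infty$ tail is dominated by the contraction $e^{-m\Lambda_j}$ and the $m\to-\infty$ tail by the expansion $e^{\abs m\Lambda_j}$ of the inverse cocycle, both beaten by $e^{-2\epsilon\abs m}$), and I extend $\langle\cdot,\cdot\rangle'_{\hat x}$ to $\C^k$ by declaring the blocks $\hat E_j(\hat x)$ mutually orthogonal. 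Shifting the summation index by one and using $\abs m-1\le\abs{m-1}\le\abs m+1$ gives, for $v\in\hat E_j(\hat x)$, the two-sided estimate $e^{-\Lambda_j-\epsilon}\abs v'_{\hat x}\le\abs{d_0F^{-1}_{\hat x}(v)}'_{\tau(\hat x)}\le e^{-\Lambda_j+\epsilon}\abs v'_{\hat x}$, where $\abs\cdot'_{\hat x}$ is the associated norm; comparing the $m=0$ terms with the triangle and Cauchy--Schwarz inequalities over the $l$ blocks gives $\abs v\le\sqrt l\,\abs v'_{\hat x}$ for every $v\in\C^k$, so after rescaling $\langle\cdot,\cdot\rangle'_{\hat x}$ by the dimensional constant $l$ (a conjugation, hence the block estimates for $\AA$ are unaffected) one may assume $\abs v\le\abs v'_{\hat x}$. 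Finally I let $C_{\epsi,\hat x}\in GL(k,\C)$ be the isomorphism sending a $\langle\cdot,\cdot\rangle'_{\hat x}$-orthonormal basis adapted to $\hat E_1(\hat x)\oplus\cdots\oplus\hat E_l(\hat x)$ onto the standard basis of $\C^k$ adapted to $L_1\oplus\cdots\oplus L_l$, a choice measurable in $\hat x$ by measurable Gram--Schmidt applied to a measurable Oseledec frame. By construction $C_{\epsi,\hat x}$ is an isometry $(\C^k,\abs\cdot'_{\hat x})\to(\C^k,\abs\cdot)$ carrying $\hat E_j(\hat x)$ onto $L_j$, so $\AA:=\CC_\epsi\circ d_0\FF^{-1}\circ\CC_\epsi^{-1}$ stabilizes each $\hat L_j$ and satisfies the stated block estimates, and $\abs{C_{\epsi,\hat x}(v)}=\abs v'_{\hat x}\ge\abs v$ gives the left inequality of assertion~(2).

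It remains to control $h(\hat x):=\norm{C_{\epsi,\hat x}}=\sup_{v\neq0}\abs v'_{\hat x}/\abs v$. The Oseledec estimates (temperedness of the Oseledec frame and of the restricted cocycles $d_0F^{-1}_{\hat x}|_{\hat E_j(\hat x)}$) show that $\log h$ is $\hat\mu$-integrable and tempered along $\tau$-orbits — this is exactly the temperedness of the Lyapunov change of coordinates in \cite[Theorem S.2.10]{KH}; replacing $h$ by an $\epsilon$-fast majorant, which exists by the dominating lemma already invoked for Lemma~\ref{abd} (applied to $1/h$, see \cite[Lemme 2.1]{BDM}), produces the $\epsilon$-fast function $h_\epsi:X\to[1,+\infty[$, and then $\abs{C_{\epsi,\hat x}(v)}\le h(\hat x)\abs v\le h_\epsi(\hat x)\abs v$ finishes assertion~(2). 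There is no serious obstacle here — this is the classical Pesin reduction; the only two points that deserve attention are the harmless dimensional normalization needed to obtain the clean lower bound $1$ in assertion~(2), and the passage from the merely \emph{tempered} adapted metric to a genuinely $\epsilon$-fast bound, which is exactly why the conclusion is phrased through the auxiliary function $h_\epsi$ rather than through $\norm{C_{\epsi,\hat x}}$ itself.
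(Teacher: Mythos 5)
Your proposal is correct and coincides with the paper's treatment: the paper gives no proof of this statement beyond the citation of \cite[Theorem S.2.10, page 666]{KH}, and your argument (integrability check for the cocycle $d_0F^{-1}_{\hat x}$, Oseledec splitting, the adapted metric $\sum_m\abs{(d_0F^{-1})^m v}^2e^{2m\Lambda_j-2\epsilon\abs m}$ with orthogonalized blocks, $C_{\epsi,\hat x}$ as the isometry onto the standard blocks, and tempering of $\norm{C_{\epsi,\hat x}}$ followed by an $\epsilon$-fast majorant) is exactly the classical proof of that cited result. The index-shift estimate, the $\sqrt l$ normalization giving the lower bound $1$, and the final passage from tempered to $\epsilon$-fast are all handled correctly.
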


We conjugate the bundle map $\FF^{-1}$ as follows:
$$ \WW := \CC_\epsi \circ \FF^{-1} \circ \CC_\epsi^{-1}  , $$ 
its expansion is of the form $\WW(\hat x , v) := (\tau(\hat x) , W_{\hat x}(v) )$, where $W_{\hat x} = C_{\epsi,\tau(\hat x)} \circ F^{-1}_{\hat x} \circ C_{\epsi, \hat x}^{-1}$.  Since $\FF^{-1} : E(\rho_\epsi) \to E(M_1)$ by Lemma \ref{abd}, Item 2 of Theorem \ref{OPR} implies that
$$\WW : E(\rho_\epsilon) \to E(M_1 h_\epsilon) . $$
By using $d_0 \WW = \AA$ and applying Lemma \ref{NN} on tame bundle maps with $\epsi' = \epsi$, we obtain a $\epsilon$-slow function, still denoted $\rho_\epsilon$, such that 
\begin{equation}\label{doublev}
\WW : E(\rho_\epsilon) \to E(\rho_\epsilon) \ \ \textrm{ and } \ \ \Lip \WW \leq e^{-\Lambda_l + 2\epsilon} . 
\end{equation}

\subsection {Resonances and constraints on $\gamma, \epsilon$}\label{RC}

As before, $\Lambda_l < \cdots < \Lambda_1$ denote the  distinct Lyapunov exponents of $(f,\mu)$ and $k_j \geq 1$ denote their multiplicities. One defines the  $k$-tuple
$$(\lambda_i)_{1\leq i \leq k} := (\Lambda_1, \cdots , \Lambda_1 , \cdots , \Lambda_l , \cdots , \Lambda_l)$$ 
by repeating $k_j$ times $\Lambda_j$.  For $j \in \{ 1, \cdots , l \}$, one defines the set of $j$-resonant indices by: 
\[ \RRR_j := \{ \al \in \N^k\;\colon\;
  \abs{\al} \geq 2 \  \textrm{ and }  \  \al_1\lambda_1 +\cdots +\al_k \lambda_k =
  \Lambda_j \} .  \] 
Since $\Lambda_l < \cdots < \Lambda_1$, one immediately sees that  one has  $2 \leq \abs{\al} \leq [\Lambda_j / \Lambda_l]  \leq [\Lambda_1 / \Lambda_l]$ and $\al_1 = \cdots =  \al_{k_1 + \cdots + k_j} = 0$ for every $\al \in \RRR_j$, where $[\cdot]$ stands for the entire part. \\

For each $\gamma > 0$ one defines a shifted Lyapunov spectrum by setting: $$ \Lambda_j^\gamma := \Lambda_j - \gamma . $$
Likewise,  one defines the $k$-tuple $(\lambda^\gamma_i)_{1\leq i \leq k}$ by repeating $k_j$ times the $\Lambda^\gamma_j$:
$$(\lambda^\gamma_i)_{1\leq i \leq k} = (\Lambda^\gamma_1, \cdots , \Lambda^\gamma_1 , \cdots , \Lambda^\gamma_l , \cdots , \Lambda^\gamma_l)$$
and one denotes
\[ \RRR^\gamma_j := \{ \al \in \N^k\;\colon\;
  \abs{\al} \geq 2 \  \textrm{ and}  \  \al_1\lambda^\gamma_1 +\cdots +\al_k \lambda^\gamma_k =
  \Lambda^\gamma_j \} .  \]

We now  fix  a constant $0 < a < \ln 4$  such that 
\begin{equation}\label {rol} 
 \al_1\lambda_1 +\cdots +\al_k \lambda_k  - \Lambda_j  \notin [-a,a] 
\end{equation}
for every $j \in \{ 1 , \cdots , l \}$ and for every $\al \in \N^k\setminus \RRR_j$  satisfying $2 \leq \abs{\al} \leq [2 \Lambda_1 / \Lambda_l]$.\\

One easily checks that  $\cup_{j=1}^l \RRR^\gamma_j =\emptyset$ if $\gamma$ is small enough, hence there is no resonance relation for the shifted Lyapunov spectrum.
More precisely, setting 
$$ b:= {1 \over 2} \min \{ \gamma , a  \} ,$$
one has $\al_1 \lambda^\gamma_1 +\cdots +\al_k \lambda^\gamma_k - \Lambda^\gamma_j  \notin [-b , b]$  for  $j \in \{ 1 , \cdots , l \}$ and   $2 \leq \abs{\al} \leq [\Lambda^\gamma_1 / \Lambda^\gamma_l]$ (see Lemma \ref{marge} in the Appendix). We actually impose the following precise Constraints on $\gamma$ and $\epsilon$, they will play an important role in our future estimates.
We stress that these constraints only depend  on the Lyapunov exponents $(\Lambda_j)_j$. 

\begin{note} \label{nota} We shall assume that $\gamma,\epsilon$ satisfy the following properties.

\begin{itemize} 
\item[1.]  The number  $\gamma >0$ is fixed and sufficently small so that:

$\gamma < \Lambda_l / 2 \ \ , \ \ \gamma ( [ \Lambda^\gamma_1 / \Lambda^\gamma_l ] -1)< a/2 \ \ \textrm{ and }  \ Ê\   4 \gamma (\Lambda^\gamma_1 / \Lambda^\gamma_l +1) \leq \Lambda_l^\gamma .$ 

\item[2.]   Any choice of $\epsilon >0$ is supposed to be small enough so that:

$2 \epsilon < \gamma \ \ , \ \  4\epsilon + 2\gamma  < \Lambda_l \ \ \textrm{ and }  \ Ê\    \epsi  ( [\Lambda^\gamma_1 / \Lambda^\gamma_l] + 3 )  <  b . $
\end{itemize}
\end{note}

\subsection{Preparatory diagram along a negative orbit}\label{tiro}

Let $\gamma , \epsilon >0$ satisfying Constraints \ref{nota}. Let $\rho_\epsi$ be the $\epsi$-slow function provided by (\ref{doublev}) in Section \ref{OR}. Let $\hat x \in X$.  For every $n \in \Z$ one sets:
$$ \rho_n := \rho_\epsilon (\tau^n(\hat x))    \ \ , \ \    W_n := W_{\tau^n(\hat x)} .$$
The sequence $(\rho_n)_n$ is $\epsi$-slow and according to (\ref{doublev}) we have: 
$$W_n :  \D^k ( \rho_n) \to \D^k(\rho_{n+1}) \ \ , \ \ \Lip W_n \leq \theta := e^{-\Lambda_l + 2\epsilon} . $$
Let $(r_n)_n$ be any $\epsi$-slow sequence  such that $(r_n)_n \leq (\rho_n)_n$. We set 
$$ r_{\hat x} := r_0  / h_{\epsi} (\hat x)  \ \ , \ \  r_n^\gamma := r_n e^{-n \gamma} \ \ , \ \ n_{\hat x} := \min \{ n \geq 0 \ , \ e^{n(-\Lambda_l + 4\epsi + \gamma)} h_\epsi({\hat x}) \leq 1 \} . $$
The integer $n_{\hat x}$ is well defined since $4\epsi + \gamma < \Lambda_l$ by our Constraints  \ref{nota}.
We have defined  the charts $\psi_{\hat x} : E_{\hat x} \to \Pj^k$ in Section \ref{affchart}.

 \begin{lem}\label{tiroir} 
The following  diagram  commutes for every $n \geq n_{\hat x}$:
\[  \xymatrix{
  B_{x_0}(r_{\hat x}) \ar[rrr]^{ f^{-n}_{\hat x}  } \ar[d]_{C_{\epsi, \hat x } \circ \psi_{\hat x}^{-1}}& & & 
   B_{x_{-n}}(r_{\hat x} e^{-n(2\epsi + \gamma)}  ) \ar[d]^{C_{\epsi, \tau^n(\hat x)}  \circ \psi_{\tau^n(\hat x)}^{-1} }     \\
      \D^k(r_0)  \ar[rrr]^{ W_{n-1} \circ \cdots \circ W_0}  & &  & \D^k (r_n^\gamma ) .   }   \]
\end{lem}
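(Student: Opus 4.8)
The diagram in Lemma~\ref{tiroir} has to be assembled from two ingredients already available: the conjugated bundle map $\WW$ of Subsection~\ref{OR}, which satisfies $W_n:\D^k(\rho_n)\to\D^k(\rho_{n+1})$ with $\Lip W_n\le\theta=e^{-\Lambda_l+2\epsilon}$, and the identity $F^{-1}_{\hat x}=\psi_{\hat x}^{-1}\circ f\circ\psi_{\tau(\hat x)}$ restricted near $0$ together with its conjugate $W_{\hat x}=C_{\epsi,\tau(\hat x)}\circ F^{-1}_{\hat x}\circ C_{\epsi,\hat x}^{-1}$. First I would unwind the definitions: composing the inverse branches step by step, $W_{n-1}\circ\cdots\circ W_0 = C_{\epsi,\tau^n(\hat x)}\circ F^{-1}_{\tau^{n-1}(\hat x)}\circ\cdots\circ F^{-1}_{\hat x}\circ C_{\epsi,\hat x}^{-1}$, and the middle composition $F^{-1}_{\tau^{n-1}(\hat x)}\circ\cdots\circ F^{-1}_{\hat x}$ is exactly $\psi_{\tau^n(\hat x)}^{-1}\circ f^{-n}_{\hat x}\circ\psi_{\hat x}$ on its domain of definition (a telescoping of $\psi^{-1}\circ f\circ\psi$ factors, with all intermediate charts cancelling). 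This gives the commutation of the diagram \emph{as an identity of germs at $x_0$}; what remains is to check that the stated domains and codomains are respected, i.e. that the germ is genuinely defined on the ball $B_{x_0}(r_{\hat x})$ and lands in $B_{x_{-n}}(r_{\hat x}e^{-n(2\epsilon+\gamma)})$.

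Next I would establish the codomain inclusion by a contraction estimate. Starting from $\D^k(r_0)$ with $r_0\le\rho_0$, the map $W_0$ sends it into $\D^k$ of radius $\theta r_0$, but I must compare with the allowed radius $\rho_1$; since $(\rho_n)$ is $\epsilon$-slow and $\theta=e^{-\Lambda_l+2\epsilon}<1$ (again by Constraints~\ref{nota}, $4\epsilon+2\gamma<\Lambda_l$ forces $\Lambda_l>2\epsilon$), one inductively gets $W_{n-1}\circ\cdots\circ W_0(\D^k(r_0))\subset\D^k(\theta^n r_0)$, and the $\epsilon$-slowness of $(r_n)$ lets one write $\theta^n r_0\le e^{n(-\Lambda_l+2\epsilon)}e^{n\epsilon}r_n\cdot(\text{bookkeeping})$; more simply, I want $\theta^n r_0 \le r_n^\gamma = r_n e^{-n\gamma}$, which is implied by $\theta^n\le e^{-n\gamma}e^{-n\epsilon}$ — i.e. $-\Lambda_l+2\epsilon\le-\gamma-\epsilon$, i.e. $3\epsilon+\gamma\le\Lambda_l$ — guaranteed by the Constraints. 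Then pulling the inclusion $\D^k(\theta^n r_0)\subset\D^k(r_n^\gamma)$ back through the chart $C_{\epsi,\tau^n(\hat x)}\circ\psi_{\tau^n(\hat x)}^{-1}$ (whose inverse, by Item~2 of Theorem~\ref{OPR}, distorts the Euclidean ball of radius $r_n^\gamma$ into a set containing a Fubini--Study ball of radius comparable to $r_n^\gamma/h_\epsilon$ — and here the definition $r_{\hat x}=r_0/h_\epsilon(\hat x)$ and the $\epsilon$-fastness of $h_\epsilon$ enter) yields exactly $B_{x_{-n}}(r_{\hat x}e^{-n(2\epsilon+\gamma)})$ after absorbing the $h_\epsilon$ factors via $\epsilon$-slow/fast bookkeeping.

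The role of $n_{\hat x}$, and the point I expect to be the main obstacle, is the domain issue: a priori the germ $f^{-n}_{\hat x}$ is only defined on a small, $\hat x$-dependent and $n$-dependent ball, and one must show this ball contains the fixed-radius ball $B_{x_0}(r_{\hat x})$ once $n\ge n_{\hat x}$. Here is where the definition $n_{\hat x}=\min\{n\ge0:e^{n(-\Lambda_l+4\epsilon+\gamma)}h_\epsilon(\hat x)\le1\}$ is used: running the contraction $\WW:E(\rho_\epsilon)\to E(\rho_\epsilon)$ backwards, the preimage $W_0^{-1}\circ\cdots\circ W_{n-1}^{-1}$ of the target disc $\D^k(r_n^\gamma)$ — which is where $f^{-n}_{\hat x}\circ\psi_{\hat x}$ must take its values — contains $\D^k(r_0)$ precisely when $e^{n(\text{rate})}h_\epsilon$-type quantities are $\le1$; the exponent $-\Lambda_l+4\epsilon+\gamma$ is the effective backward-expansion rate after accounting for the two $\epsilon$'s from slowness of $(r_n)$ and the distortion $h_\epsilon$, and $4\epsilon+\gamma<\Lambda_l$ (Constraints~\ref{nota}) makes $n_{\hat x}$ finite. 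So the argument is: for $n\ge n_{\hat x}$, the chart-pulled-back target disc, when transported backward by the $\theta$-Lipschitz contractions $W_j$, covers $\D^k(r_0)$, hence $f^{-n}_{\hat x}$ is defined on all of $B_{x_0}(r_{\hat x})$ and the diagram commutes there. I would close by remarking that all the constants $r_{\hat x}, n_{\hat x}$ depend measurably on $\hat x$ and that the needed inequalities among $\theta,\gamma,\epsilon,\Lambda_l$ are exactly those collected in Constraints~\ref{nota}, so no new smallness hypothesis is introduced.
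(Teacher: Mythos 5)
Your proposal is correct and follows essentially the same route as the paper: the commutation is definitional (telescoping the charts), and the whole content is the verification that each arrow respects the stated discs via $\Lip C_{\epsi,\hat x}\le h_\epsi(\hat x)$, $\Lip (W_{n-1}\circ\cdots\circ W_0)\le \theta^n$ and the slow/fast bookkeeping, with $n\ge n_{\hat x}$ supplying exactly $e^{n(-\Lambda_l+4\epsi+\gamma)}h_\epsi(\hat x)\le 1$. The one imprecision is where you locate the role of $n_{\hat x}$: in the paper's proof the map $\Phi_n:=(C_{\epsi,\tau^n(\hat x)}\circ\psi^{-1}_{\tau^n(\hat x)})^{-1}\circ W_{n-1}\circ\cdots\circ W_0\circ C_{\epsi,\hat x}\circ\psi^{-1}_{\hat x}$ is defined on all of $B_{x_0}(r_{\hat x})$ for every $n\ge 0$ and coincides with $f^{-n}_{\hat x}$ there (so there is no domain obstruction), and the condition $n\ge n_{\hat x}$ is used only to get $\Lip\Phi_n\le \theta^n h_\epsi(\hat x)\le e^{-n(2\epsi+\gamma)}$, i.e.\ to make the image fit into the prescribed target ball $B_{x_{-n}}(r_{\hat x}e^{-n(2\epsi+\gamma)})$ --- a step your second paragraph glosses over as ``absorbing the $h_\epsilon$ factors'' but which is precisely where $n_{\hat x}$ enters.
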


 \begin{proof}
 The commutation follows from our previous definitions so we only have to check that each arrow is well defined.
We recall that, to  simplify,  we do not take into account the distortion induced by the charts $\psi_{\hat x}$. 

The left vertical arrow is well defined since $r_{\hat x} = r_0  / h_{\epsi} (\hat x)$ and $\Lip C_{\epsi, \hat x}  \leq  h_\epsi(\hat x)$. Similarly, to see that the right vertical arrow is well defined too, we observe that
$$ r_{\hat x}  e^{-n(2\epsi + \gamma)} h_\epsi(\tau^n(\hat x))  = r_0  e^{-n(2\epsi + \gamma)} h_\epsi(\tau^n(\hat x)) / h_\epsi(\hat x) \leq  r_0  e^{-n(\epsi + \gamma)} \leq r_n e^{-n \gamma} = r_n^\gamma . $$ 
Now, by Constraints \ref{nota} one has :
$$ r_0 \theta^n =r_0 e^{n(-\Lambda_l + 2\epsi)}  \leq r_0 e^{-n (\epsi + \gamma)} \leq r_n e^{-n\gamma} = r_n^\gamma $$
which, since $\Lip W_{n-1}\circ \cdots\circ W_0 \leq \theta^n$,
 shows that the bottom horizontal arrow is well defined.
Finally, one sees that the top horizontal arrow is well defined  since, according to the definition
of  $n_{\hat x}$, the map $\Phi_n :=  (C_{\epsi, \tau^n(\hat x)}  \circ \psi^{-1}_{\tau^n(\hat x)}) ^{-1}  \circ W_{n-1} \circ \cdots \circ W_0 \circ  (C_{\epsi, \hat x} \circ \psi_{\hat x}^{-1} ) $ satisfies
$$\Lip ( \Phi_n ) \leq e^{n(-\Lambda_l + 2\epsi)} h_\epsi (\hat x)   \leq e^{-n(2\epsi + \gamma)}$$ 
on the ball $B_{x_0}(r_{\hat x})$ for every $n \geq n_{\hat x}$. \finsec
\end{proof}

\subsection{Theorem A from the linearization of chains}\label{contrholo}

 Theorem A follows from Theorem B below, whose proof will be given in Section \ref{proofTHMB}. 

\begin{theorem}{B} \label{linn}
Let $\Lambda_l < \cdots < \Lambda_1$ be positive real numbers. Let $\gamma , \epsilon > 0$ satisfying  Constraints \ref{nota}. Let $(\rho_n)_{n \in \Z}$ be a  $\epsilon$-slow sequence and let  
$$W_n :  \D^k ( \rho_n) \to \D^k(\rho_{n+1}) $$
be a sequence of holomorphic contractions fixing the origin in $\C^k$ and such that $$\forall n \in \Z \ , \ \Lip W_n \leq \theta := e^{-\Lambda_l + 2 \epsi} . $$
We assume the existence of a  decomposition $\C^k = \oplus_{j=1}^l L_j$, where $L_j = \{ 0 \} \times \cdots \times \C^{k_j}  \times \cdots \times \{0 \}$ such that, for every $n \in \Z$,  the linear map  $A_n := d_0 W_n$ 
satisfies
   \[ A_n(L_j) = L_j \ , \  
 e^{-\Lambda_j - \epsilon} \abs{v} \leq \abs{A_n(v)}  \leq e^{-\Lambda_j + \epsilon}  \abs{v}  \ \ \textrm{for every}  \ v \in L_j . \]
Then there exists a  $\epsi$-slow sequence  $(r_n)_n \leq (\rho_n)_n$ and a sequence of holomorphic maps $$\varphi_n :  \D^k ( r^\gamma_n)  \to  \D^k(r_n) \ \ \textrm{where } \ \  r_n^\gamma := r_n  e^{-n\gamma} $$ 
such that

\begin{enumerate}
\item  $ e^{n(\gamma - 2\epsi)} \vert u - v \vert     \leq     \vert \varphi_n(u) - \varphi_n(v)  \vert     \leq  e^{n (\gamma + 2\epsi)}  \vert u - v \vert$, 
\item the following diagram commutes:
  \[  \xymatrix{
 \cdots  \ar[rr]^{ W_{n-1} } & &   \D^k ( r^\gamma_n )  \ar[rr]^{ W_n }  \ar[d]_{\varphi_n}  & &    \D^k(r^\gamma_{n+1})   \ar[rr]^{ W_{n+1} }  \ar[d]_{\varphi_{n+1}}  & &\cdots  \\
  \cdots \ar[rr]^{ A^\gamma_{n-1} } & &  \D^k( 4 r_n)  \ar[rr]^{ A^\gamma_n }  & &     \D^k( 4 r_{n+1})  \ar[rr] ^{ A^\gamma_{n+1} }        & &\cdots  }   \]
  \end{enumerate}
  where the maps $A_n^\gamma$ are given by $A_n^\gamma:=e^\gamma A_n$.
\end{theorem}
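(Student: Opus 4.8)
The plan is to construct the linearizing maps $\varphi_n$ as limits of the natural "approximate conjugacies" obtained by composing forward iterates of the $W$'s with inverse iterates of the linear parts, after neutralizing the resonance obstruction by the shift trick already prepared in Section \ref{RC}. Concretely, I would first pass from $W_n$ and $A_n=d_0W_n$ to the shifted objects $A_n^\gamma:=e^\gamma A_n$, whose linear exponents are the $\Lambda_j^\gamma=\Lambda_j-\gamma$; by the computation recorded after Constraints \ref{nota}, the shifted spectrum $(\lambda_i^\gamma)$ carries no resonances $\RRR_j^\gamma$ in the relevant range, and moreover the gaps stay bounded away from $[-b,b]$. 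The point of the $e^\gamma$ factor is that $A_n^\gamma$ is "almost expanding by at most $e^{-\Lambda_l+\gamma}$", so that on one hand $\prod A_n^\gamma$ still contracts (since $\gamma<\Lambda_l$), while on the other hand the formal power series solving the conjugacy equation converges because the small denominators $|\sum\alpha_i\lambda_i^\gamma-\Lambda_j^\gamma|\ge b>0$ are uniformly bounded below. The maps $\varphi_n$ will satisfy $A_n^\gamma\circ\varphi_n=\varphi_{n+1}\circ W_n$.

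Next I would set up the iteration that produces $\varphi_n$. Writing $W_n = A_n + (\text{higher order})$, the candidate is
\[
\varphi_n := \lim_{m\to\infty} (A_{n-1}^\gamma)^{-1}\circ\cdots\circ (A_{n-m}^\gamma)^{-1}\circ W_{n-1}\circ\cdots\circ W_{n-m},
\]
i.e. one goes backward along the chain with the nonlinear maps and then corrects by the inverse linear cocycle. One checks the conjugacy relation formally term by term and verifies convergence on a ball $\D^k(r_n^\gamma)$ for a suitable $\epsilon$-slow sequence $(r_n)$, using: (i) that $W_{n-1}\circ\cdots\circ W_{n-m}$ contracts $\D^k(r_n^\gamma)$ into a ball of radius $\le\theta^m r_n^\gamma$ with $\theta=e^{-\Lambda_l+2\epsilon}$ (this is where $2\epsilon<\gamma$ and $4\epsilon+2\gamma<\Lambda_l$ from Constraints \ref{nota} enter, guaranteeing $e^{m\gamma}\theta^m\to 0$); and (ii) that applying $(A_{n-1}^\gamma)^{-1}\cdots(A_{n-m}^\gamma)^{-1}$ expands by at most $e^{m(\Lambda_1^\gamma+\epsilon)}$ on the $L_1$ directions, so the product of these two rates is summable. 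The resonance-freeness of the shifted spectrum is what lets the successive Taylor-coefficient corrections be absorbed rather than accumulate; alternatively, since $\cup_j\RRR_j^\gamma=\emptyset$, one can instead kill all monomials of degree $\le[\Lambda_1^\gamma/\Lambda_l^\gamma]$ by an explicit polynomial change of coordinates (a Poincaré–Dulac type normalization with no resonant terms, hence a genuine linearization on that jet) and then run a fixed-point argument on the remaining tail, which is contracted because higher-degree monomials are a fortiori non-resonant by the degree bound and Constraints \ref{nota}.

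The Lipschitz estimate in item 1 then follows by controlling $d\varphi_n$ along the iteration: $d\varphi_n$ differs from $\Id$ by terms of the same summable order, so $\varphi_n$ is bi-Lipschitz with constants $e^{\pm n(\gamma\pm2\epsilon)}$ after one accounts for the scaling $r_n^\gamma=r_ne^{-n\gamma}$ between source and target discs — the $e^{n\gamma}$ comes from the normalization choice, the $e^{\pm2\epsilon}$ absorbs the $\epsilon$-slow fluctuations of $(r_n)$ and the $\epsilon$-errors in the $A_n$ bounds. Finally I would check the target disc can be taken to be $\D^k(4r_{n+1})$ rather than $\D^k(r_{n+1})$, which is harmless and in fact convenient since $a<\ln 4$ is exactly the room used to state the resonance separation \eqref{rol}.

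The main obstacle is the convergence of the linearizing scheme in the non-autonomous (cocycle) setting with only $\epsilon$-tempered control of the data: one must arrange the $\epsilon$-slow sequence $(r_n)$ and track how the small-denominator bound $b$, the contraction rate $\theta$, the expansion rate $e^{\Lambda_1^\gamma+\epsilon}$, and the degree bound $[\Lambda_1^\gamma/\Lambda_l^\gamma]$ interact, so that every inequality in Constraints \ref{nota} is used and the geometric series in the fixed-point argument genuinely closes on a disc of $\epsilon$-slow radius. This bookkeeping — rather than any single conceptual step — is the heart of the proof of Theorem B, and it is what I would expect to occupy Section \ref{proofTHMB}.
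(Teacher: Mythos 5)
Your overall strategy --- shift the spectrum by $e^\gamma$ so that $\cup_j \RRR_j^\gamma=\emptyset$, then conjugate the chain to its linear part --- is the paper's, and the route you mention only as an ``alternative'' (first kill all homogeneous terms of degree up to $[\Lambda_1^\gamma/\Lambda_l^\gamma]+1$ by Poincar\'e--Dulac type changes of coordinates solving homological equations, then run a limit/fixed-point argument on the high-order tail) is exactly how Section \ref{proofTHMB} actually proceeds (Lemma \ref{decal}, Proposition \ref{thm1}, Proposition \ref{iter2}). The problem is that your \emph{primary} argument --- taking the naive limit of inverse-linear-cocycle corrections of the compositions of the $W_n$ themselves --- does not converge. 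The product of the two rates you invoke in (i)--(ii) is $\theta^m\cdot e^{m(\Lambda_1^\gamma+\epsi)}=e^{m(\Lambda_1-\Lambda_l-\gamma+3\epsi)}$, which tends to $0$ only if $\Lambda_1<\Lambda_l+\gamma-3\epsi$, i.e.\ essentially only when all the exponents coincide. This is the classical obstruction: the limit $\lim_p (A^\gamma_{p,n})^{-1}\circ X_{p,n}$ converges only when $X_n=A_n^\gamma+O(q+1)$ with $(Me^{2\epsi})^{q+1}<me^{-\epsi}$ (Proposition \ref{iter2}), and with $q=1$ --- which is all that $W_n^\gamma=A_n^\gamma+O(2)$ gives you --- this forces roughly $\Lambda_1^\gamma<2\Lambda_l^\gamma$. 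So the preliminary jet normalization up to degree $[\Lambda_1^\gamma/\Lambda_l^\gamma]+1$ is not an optional variant but the indispensable Step 2; and what makes the remaining tail linearizable is the rate inequality of Lemma \ref{donne}, not any ``a fortiori non-resonance'' of high-degree monomials.

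Two further points. First, your candidate $\varphi_n=\lim_m (A_{n-1}^\gamma)^{-1}\circ\cdots\circ(A_{n-m}^\gamma)^{-1}\circ W_{n-1}\circ\cdots\circ W_{n-m}$ is ill-formed: $W_{n-1}\circ\cdots\circ W_{n-m}$ is defined on the disc at time $n-m$ and lands at time $n$, so it cannot be applied to points of $\D^k(r_n^\gamma)$. The correct limit composes \emph{forward} from time $n$, $T_n^2=\lim_p(A^\gamma_{n+p}\circ\cdots\circ A^\gamma_n)^{-1}\circ X_{n+p}\circ\cdots\circ X_n$. Second, in this non-autonomous setting the homological equation $G_n^{(p)}+H_{n+1}\circ A_n^\gamma-A_n^\gamma\circ H_n=0$ is not solved by dividing by a small denominator: one sums a series over the future or over the past of the orbit according to the sign of $\al\cdot\lambda^\gamma-\Lambda_j^\gamma$, and the lower bound $b$ together with $\epsi([\Lambda_1^\gamma/\Lambda_l^\gamma]+3)<b$ is what makes these series converge against the $\epsi$-slow growth of the Taylor coefficients (Lemma \ref{iter}). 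This is also where the indexing by $\Z$ (both past and future of $\hat x$) is genuinely used, a point your sketch passes over.
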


 Theorem A is simply obtained by combining  Lemma \ref{tiroir} with Theorem B and setting :
$$   \rho_{\hat x} = 4 r_0 \ , \ \varphi_{\hat x , n} = \varphi_n \circ C_{\epsi, \tau^n(\hat x)} \circ \psi_{\tau^n(\hat x)}^{-1}  \ , \  D_{\hat x , n} = A_{n-1}^\gamma \circ \cdots \circ A_0^\gamma. $$

Let us stress that the perturbation $A_n^\gamma = e^\gamma A_n$ precisely aims to shift the Lyapunov spectrum of  $(A_n)_n$ and cancel the resonances.

\begin{rem}
The statement of Theorem A only takes into account the right hand side part of the commutative diagram of Theorem B, which corresponds to the negative coordinates (the past) of $\hat x$. The fact that  Theorem B concerns sequences of  mappings $(W_n)_n$ indexed by $\Z$ is crucial. Indeed, we shall see in the proof of Lemma \ref{iter} that the construction of the change of coordinates  $\varphi_{\hat x,n}$ actually requires the positive and negative coordinates of $\hat x$.
\end{rem}

\subsection{Proof of Corollary}

Let $y$ be either $p$ or $q$. One sees on the commuting diagram in Theorem A that $f^n(y) \in B_{x_0}(tr'_{\hat x})$ and that $D_{\hat x,n}\circ \varphi_{\hat x,0} \left(f^n(y)\right) =\varphi_{\hat x,n}(y)$. Taking into account the Lipschitz  estimates on $\varphi_{\hat x,0}$ it follows that
\begin{eqnarray}\label{C1}
\varphi_{\hat x,n}(p) \, , \,  \varphi_{\hat x,n}(q) \in D_{\hat x,n} ( \D^k(tr_{\hat x})).
\end{eqnarray}
Let us also check that
\begin{eqnarray}\label{C2}
 D_{\hat x,n} ( \D^k(tr_{\hat x}))\subset \varphi_{\hat x,n} \left( B_{x_{-n}}(tr_{\hat x} e^{-n(\gamma+2\epsilon)})\right).
\end{eqnarray}
To see this we simply observe that
\begin{eqnarray*}
D_{\hat x,n}\left( \D^k(tr_{\hat x})\right)\subset
\D^k\left(tr_{\hat x} e^{-n(\Lambda_l -\gamma -\epsilon)}\right) \subset
\D^k\left(tr_{\hat x} e^{-4n\epsilon}\right) \subset
\varphi_{\hat x,n} \left( B_{x_{-n}}(tr_{\hat x} e^{-n(\gamma+2\epsilon)})   \right)
\end{eqnarray*}
where the second inclusion comes from $\Lambda_l > 2\gamma + 4\epsilon>\gamma +5\epsilon$ (see the Constraints \ref{nota})
and the last one from the Lipschitz estimate on $\varphi_{\hat x,n}$.

Now, by (\ref{C1}) there exists a segment $\Gamma$ connecting the two points  $\varphi_{\hat x,n}(p), \varphi_{\hat x,n}(q)$ within the \emph{convex set} $D_{\hat x,n}\left( \D^k(tr_{\hat x})\right)$. By the Lipschitz estimate
on $\varphi_{\hat x,n}$, this segment $\Gamma$ satisfies 
$$\textrm{length}(\Gamma)\le e^{n(\gamma+3\epsilon)}h_{\hat x} d(p,q) .$$
By (\ref{C2}), the image $\widetilde{\Gamma}$ of $\Gamma$ by the map $\left(\varphi_{\hat x,n}\right)^{-1}$ is a well defined smooth path connecting
$p$ and $q$ and contained in $\varphi_{\hat x,n}^{-1}\circ D_{\hat x,n} (\D^k(tr_{\hat x}))$. Again by the Lipschitz estimate on $\varphi_{\hat x,n}$ we get 
$$ \textrm{length}(\widetilde{\Gamma}) \le e^{n(-\gamma+2\epsilon)} \textrm{length}(\Gamma)\le e^{5n\epsilon}h_{\hat x} d(p,q) . $$
 Finally, 
$\widetilde{\Gamma} \subset 
\varphi_{\hat x,n}^{-1}\circ D_{\hat x,n} \circ \varphi_{\hat x ,0} \left(B_{x_0}(tr_{\hat x})\right)={\widetilde E}_{\hat x}^{-n}(t)$ by the Lipschitz estimate
on $\varphi_{\hat x,0}$.

\section{Linearization of chains of holomorphic contractions}\label{proofTHMB}

This Section is devoted to the proof of Theorem B.

\subsection*{Step 1 : Shifting the spectrum}
 
\begin{lem} \label{decal}
Let $W_n :  \D^k (\rho_n) \to \D^k(\rho_{n+1})$ be a sequence of holomorphic contractions satisfying the assumptions of Theorem B.
Let
$$ \rho^\gamma_n := \rho_n e^{-n \gamma} \ \ , \ \ \Delta_n := e^{n\gamma} \Id_{\C^k} \ \ , \ \ W^\gamma_n := \Delta_{n+1} \circ W_n \circ \Delta_n^{-1} . $$
Then
\begin{itemize}
\item the following diagram commutes
\[  \xymatrix{
 \cdots  \ar[rr]^{ W_{n-1} } & &   \D^k ( \rho^\gamma_n)  \ar[rr]^{ W_n }  \ar[d]_{\Delta_n}  & &    \D^k(\rho^\gamma_{n+1})   \ar[rr]^{ W_{n+1} }  \ar[d]_{\Delta_{n+1}}  & &\cdots  \\
  \cdots \ar[rr]^{ W^\gamma_{n-1} } & &  \D^k(\rho_n)  \ar[rr]^{ W^\gamma_n }  & &     \D^k(\rho_{n+1})  \ar[rr] ^{ W^\gamma_{n+1} }        & &\cdots  }   \]
\item $\Lip W_n^\gamma = \theta e^\gamma  <1$ and the linear part $A_n^\gamma  := e^\gamma A_n$ of  $W_n^\gamma$ satisfies
\[  e^{-\Lambda_1 - \epsilon + \gamma} \abs{v} \leq \abs{A^\gamma_n(v)}  \leq e^{-\Lambda_l + \epsilon + \gamma}  \abs{v}  \ \ \textrm{for every}  \ v \in \C^k . \]
\end{itemize}
\end{lem}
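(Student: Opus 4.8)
This is essentially a bookkeeping lemma: it records what happens when one conjugates the chain $(W_n)$ by the explicit diagonal dilations $\Delta_n = e^{n\gamma}\Id$. So my plan is to verify the two bullet points directly from the definitions, paying attention only to the domains of definition and the Lipschitz/linear estimates.

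First I would check that the diagram makes sense, i.e. that $W_n^\gamma = \Delta_{n+1}\circ W_n\circ \Delta_n^{-1}$ is well defined on $\D^k(\rho_n)$ with image in $\D^k(\rho_{n+1})$. Indeed $\Delta_n^{-1}$ maps $\D^k(\rho_n)$ to $\D^k(\rho_n e^{-n\gamma}) = \D^k(\rho_n^\gamma)$, then $W_n$ maps this into $\D^k(\rho_{n+1}^\gamma)$ since $\rho_n^\gamma e^{-n\gamma}\cdots$ — more precisely, since $W_n$ contracts and $\rho_n^\gamma \le \rho_n$ while the target radius $\rho_{n+1}$ dominates $\Lip W_n \cdot \rho_n^\gamma \cdot e^{(n+1)\gamma}$; this uses $\Lip W_n \le \theta = e^{-\Lambda_l + 2\epsilon}$ together with $\theta e^\gamma < 1$, which follows from the Constraints $2\epsilon < \gamma < \Lambda_l/2$ (so $-\Lambda_l + 2\epsilon + \gamma < 0$). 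Then $\Delta_{n+1}$ maps $\D^k(\rho_{n+1}^\gamma)$ to $\D^k(\rho_{n+1})$, as required. The commutation of the diagram is then immediate: $\Delta_{n+1}\circ W_n = (\Delta_{n+1}\circ W_n\circ \Delta_n^{-1})\circ \Delta_n = W_n^\gamma\circ\Delta_n$, which is exactly the square.

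For the second bullet, the Lipschitz constant: since $\Delta_n$ and $\Delta_{n+1}$ are scalar multiples of the identity by $e^{n\gamma}$ and $e^{(n+1)\gamma}$ respectively, conjugation multiplies the Lipschitz constant by $e^{(n+1)\gamma}\cdot e^{-n\gamma} = e^\gamma$, so $\Lip W_n^\gamma = e^\gamma \Lip W_n$; combined with the hypothesis $\Lip W_n \le \theta$ this gives $\Lip W_n^\gamma \le \theta e^\gamma < 1$ (again from the Constraints). For the linear part, $d_0 W_n^\gamma = \Delta_{n+1}\circ (d_0 W_n)\circ \Delta_n^{-1} = e^{(n+1)\gamma}\cdot A_n\cdot e^{-n\gamma} = e^\gamma A_n =: A_n^\gamma$. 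The bounds on $A_n^\gamma$ then follow by multiplying the hypothesized block bounds $e^{-\Lambda_j-\epsilon}\abs v \le \abs{A_n(v)} \le e^{-\Lambda_j+\epsilon}\abs v$ on $L_j$ by $e^\gamma$ and taking the union over $j$: the smallest lower bound over all blocks is $e^{-\Lambda_1-\epsilon+\gamma}$ (since $\Lambda_1$ is the largest exponent) and the largest upper bound is $e^{-\Lambda_l+\epsilon+\gamma}$ (since $\Lambda_l$ is the smallest), giving the stated global estimate on all of $\C^k$.

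There is no real obstacle here; the only point requiring a little care is making sure every domain inclusion in the diagram is valid, which is where the Constraints \ref{nota} (specifically $2\epsilon < \gamma$ and $\gamma < \Lambda_l/2$, hence $\theta e^\gamma < 1$) get used. Everything else is a one-line computation with scalar dilations.
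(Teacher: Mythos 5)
Your proof is correct and follows essentially the same direct-verification route as the paper's; the paper phrases the domain check on the top row ($W_n(\D^k(\rho_n^\gamma))\subset\D^k(\rho_{n+1}^\gamma)$, using that $(\rho_n^\gamma)$ is $2\gamma$-slow and $\theta e^{2\gamma}<1$), which is equivalent to your check on the bottom row. One small imprecision: $\theta e^{\gamma}<1$ alone does not give $\theta e^{\gamma}\rho_n\le\rho_{n+1}$, since $\rho_{n+1}$ may be as small as $e^{-\epsilon}\rho_n$; you also need the $\epsilon$-slowness of $(\rho_n)$ together with $\theta e^{\gamma+\epsilon}\le 1$, which does follow from the Constraint $4\epsilon+2\gamma<\Lambda_l$.
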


\begin{proof} Recall that according to Constraints \ref{nota} we have $\theta e^{2 \gamma} = e^{-\Lambda_l +2\epsilon +2 \gamma} < 1$ and  $(\rho_n^\gamma)_{n\in \Z}$ is $2\gamma$-slow since $\epsilon < \gamma$. In particular 
$\Lip W_n^\gamma = e^\gamma \Lip W_n = e^\gamma  \theta<1$.
It remains to check that $W_n : \D^k(\rho^\gamma_n) \to \D^k(\rho^\gamma_{n+1})$, which is clear since
$\rho^\gamma_n \cdot \Lip W_n  \le \rho^\gamma_n \theta \leq \rho^\gamma_{n+1} \theta e^{2\gamma} <\rho^\gamma_{n+1}$.\fin
\end{proof}

\subsection*{Step 2 : Improving the order of contact with the linear part}

\begin{prop}\label{thm1}
Let $W_n :  \D^k (\rho_n) \to \D^k(\rho_{n+1})$ be a sequence of holomorphic contractions satisfying the assumptions of Theorem B.  Let $W^\gamma_n : \D^k ( \rho_n) \to \D^k(\rho_{n+1})$ be the sequence of holomorphic contractions given by Lemma \ref{decal}. There exist a $\epsi$-slow sequence  $(r_n)_n \leq (\rho_n)_n$ and a sequence of holomorphic maps $$T^1_n :  \D^k (r_n)  \to  \D^k (2r_n)$$ such that $d_0 T_n^1  = \Id_{\C^k}$ and
\begin{enumerate}
\item $\forall n \in \Z$, $\forall (u,v) \in \D^k(r_n) \times \D^k(r_n)$, $e^{-\epsi} \abs{u-v} \leq
  \abs{T_n^1 (u) - T_n^1 (v)} \leq e^\epsi \abs{u-v}$ ,
\item the following diagram commutes
\[  \xymatrix{
 \cdots  \ar[rr]^{ W^\gamma_{n-1} } & &   \D^k (r_n)  \ar[rr]^{ W^\gamma_n }  \ar[d]_{T^1_n}  & &    \D^k( r_{n+1})   \ar[rr]^{ W^\gamma_{n+1} }  \ar[d]_{T^1_{n+1}}  & &\cdots  \\
  \cdots \ar[rr]^{ X_{n-1} } & &  \D^k(2 r_n)  \ar[rr]^{ X_n }                  & &     \D^k(2 r_{n+1})  \ar[rr] ^{ X_{n+1} }        & &\cdots  }   \]
where $X_n = A^\gamma_n  + O([\Lambda^\gamma_1 / \Lambda^\gamma_l] +2)$ for every $n \in \Z$.
\end{enumerate}
\end{prop}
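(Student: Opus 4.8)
The goal is to kill, by a single near-identity change of coordinates, the Taylor jet of each $W_n^\gamma$ from order $2$ up to order $N:=[\Lambda_1^\gamma/\Lambda_l^\gamma]+1$, leaving $W_n^\gamma$ tangent to its linear part $A_n^\gamma$ to order $N+1$. The key point is that \emph{on this finite range of orders there are no resonances} for the shifted spectrum: Section \ref{RC} guarantees $\cup_j\RRR_j^\gamma=\emptyset$, and more precisely $\al_1\lambda_1^\gamma+\cdots+\al_k\lambda_k^\gamma-\Lambda_j^\gamma\notin[-b,b]$ for $2\le|\al|\le N$. So I would proceed degree by degree, $m=2,3,\dots,N$, and at each step solve a cohomological (homological) equation over $\Z$ for the degree-$m$ part of the conjugating map. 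The final $T_n^1$ is the composition $T_n^{(N)}\circ\cdots\circ T_n^{(2)}$ of these successive near-identity maps, and $X_n$ is the resulting conjugate $A_n^\gamma+O(N+1)$.

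First I would set up the degree-$m$ step. Suppose inductively that after the previous steps the chain has the form $V_n=A_n^\gamma+R_n^{(m)}$ with $R_n^{(m)}=O(m)$; write $R_n^{(m)}=P_n^{(m)}+O(m+1)$ with $P_n^{(m)}$ the homogeneous degree-$m$ part. Looking for a conjugacy $S_n=\Id+Q_n$ with $Q_n=O(m)$ homogeneous of degree $m$, the condition that $S_{n+1}\circ V_n\circ S_n^{-1}=A_n^\gamma+O(m+1)$ becomes, at degree $m$, the equation
\[
Q_{n+1}\circ A_n^\gamma - A_n^\gamma\circ Q_n = P_n^{(m)} .
\]
Decomposing into the monomial basis $v^\al e_i$ (with $\al$ ranging over $|\al|=m$ and $e_i$ along the block $L_{j(i)}$), the operator $Q\mapsto Q_{n+1}\circ A_n^\gamma-A_n^\gamma\circ Q$ acts diagonally with ``eigenvalue'' of size comparable to $e^{\al\cdot\lambda_n^\gamma-\lambda_{i,n}^\gamma}$, where the exponents lie within $\pm m\epsi$ of the unperturbed ones $\al\cdot\lambda^\gamma-\Lambda_{j(i)}^\gamma$; by the no-resonance margin $b$ and Constraints \ref{nota} (which force $N\epsi<b$), these are uniformly bounded away from $1$, and in fact are all $<e^{-b/2}<1$ since $A_n^\gamma$ is a contraction on each block. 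Hence I can solve for $Q_n$ by the convergent Neumann-type series $Q_n=-\sum_{p\ge0}(A_{n}^\gamma\cdots)$ — concretely $Q_n=\sum_{p\ge 0}(A_{n+p}^\gamma\circ\cdots\circ A_n^\gamma)^{-1}\circ P_{n+p}^{(m)}\circ(A_{n+p-1}^\gamma\circ\cdots\circ A_n^\gamma)$ or the analogous forward sum, whichever converges — using that the composed linear maps contract like $\theta^p e^{p\gamma}$ while the $P_{n+p}^{(m)}$ grow at most geometrically with rate controlled by the $\epsi$-slowness of the domains. This gives a uniform bound $\|Q_n\|\lesssim\|P_n^{(m)}\|$ with a constant depending only on the spectrum.

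Next I would control domains and the bi-Lipschitz estimate. Each $S_n^{(m)}=\Id+Q_n^{(m)}$ is defined and injective on a slightly shrunk polydisc, with $\|Q_n^{(m)}\|$ small on that polydisc once the radius is taken $\le$ some $\epsi$-slow sequence $(r_n^{(m)})_n$; shrinking finitely many times ($N-1$ steps) yields the final $\epsi$-slow sequence $(r_n)_n\le(\rho_n)_n$, and choosing each step's contribution to the distortion $\le e^{\epsi/(N-1)}$ gives the global estimate $e^{-\epsi}|u-v|\le|T_n^1(u)-T_n^1(v)|\le e^\epsi|u-v|$ of item 1, as well as $T_n^1:\D^k(r_n)\to\D^k(2r_n)$ and $d_0T_n^1=\Id$. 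Commutation of the diagram in item 2 is automatic from the construction, with $X_n:=T_{n+1}^1\circ W_n^\gamma\circ(T_n^1)^{-1}=A_n^\gamma+O(N+1)$.

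The main obstacle is the \emph{uniformity in $n$} of the solution of the homological equation: because the $W_n^\gamma$ are only an $\epsi$-slow chain, not a single map, one must show the relevant series converges with estimates independent of $n$ (and depending only on the Lyapunov data, not on $\hat x$), and that the cumulative loss of domain over the $N-1$ degrees stays inside an $\epsi$-slow sequence bounded by $(\rho_n)_n$. This is where Constraints \ref{nota} — in particular $\epsi([\Lambda_1^\gamma/\Lambda_l^\gamma]+3)<b$ and $2\epsi<\gamma$ — are used to absorb the $\epsi$-drift of the exponents and of the polydisc radii into the resonance margin $b$; everything else (solving each linear equation, Cauchy estimates for the homogeneous parts, composing finitely many near-identity maps) is routine.
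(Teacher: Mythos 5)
Your plan matches the paper's proof: the paper isolates the single-degree step as Lemma \ref{iter} (a homological equation $G_n^{(p)}+H_{n+1}\circ A_n^\gamma-A_n^\gamma\circ H_n=0$ solved by a one-sided series over the past or the future according to the sign of $\al\cdot\lambda^\gamma-\Lambda_j^\gamma$, with convergence supplied by the margin $b>\epsi([\Lambda_1^\gamma/\Lambda_l^\gamma]+3)$) and then applies it successively for $p=2,\dots,p^*=[\Lambda_1^\gamma/\Lambda_l^\gamma]+1$, composing the resulting near-identity maps with distortion $e^{\epsi/p^*}$ each, exactly as you propose. The only inaccuracy is your parenthetical claim that the relevant ratios are all $<e^{-b/2}<1$ --- both signs of $\al\cdot\lambda^\gamma-\Lambda_j^\gamma$ can occur, which is precisely why the two one-sided sums are needed --- but your ``whichever converges'' already covers this.
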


The proof consists in applying a finite number of times Lemma \ref{iter} below. Let us say for convenience that  a  sequence 
$(G_n)_{n \in \Z}$ of 
holomorphic mappings $G_n : \D^k(a_n) \to \D^k(a_{n+1})$
 is $\epsi$-\emph{slow} if  $(a_n)_{n \in \Z}$ is a $\epsilon$-slow sequence. 
Assume that the linear part of such a sequence $(G_n)_{n \in \Z}$ is equal to $(A^\gamma_n)_{n \in \Z}$. Then, according to  our discussion in Subsection \ref{RC}, the $\Lambda_j^\gamma$'s do not satisfy any resonance relation and thus, given any integer $p \geq 2$, Lemma \ref{iter} allows to conjugate $(G_n)_{n \in \Z}$ to some new 
$\epsi$-\emph{slow} sequence whose linear part is still equal to $(A^\gamma_n)_{n \in \Z}$ but with no homogeneous part
of degree $p$ in its Taylor expansion. The cancellation of the $p$-homogeneous part of $(G_n)_{n \in \Z}$ relies on the resolution 
of  some so-called homological equations. We shall only sketch the proofs,  the details are in 
\cite[Subsection 3.2]{BDM} or \cite{JV}. 

\begin{lem}\label{iter} Let  $\left(G_n \right)_{n\in \Z}$ be a  $\epsi$-slow sequence of holomorphic contractions fixing the origin and whose linear part is equal to  $(A^\gamma_n)_{n \in \Z}$. Let $p \geq 2$ and let  $G_n^{(p)}$  be the $p$-homogeneous part of the  Taylor expansion of $G_n$.
Then:
\begin{enumerate}
\item there exists a sequence of $p$-homogeneous polynomial maps  $(H_n)_{n \in \Z}$ such that
\[ \forall n \in \Z \ \ , \ \ G_n^{(p)} +  H_{n+1} \circ A^\gamma_n - A^\gamma_n \circ H_n  = 0 , \]
\item for $S^{(p)}_n := \Id + H_n$ and $\epsi' > 0$ 
there exists a $\epsi$-slow sequence $(\tau_n)_{n \in \Z}$ such that
$$ \forall n \in \Z \ \ , \ \ \forall (u,v) \in \D^k(\tau_n) \times \D^k(\tau_n) \ \ , \ \ e^{-\epsi '} \abs{u-v} \leq
  \abs{S^{(p)}_n (u) - S^{(p)}_n (v)} \leq e^{\epsi '} \abs{u-v} $$
  and for which the following diagram commutes
\[  \xymatrix{
 \cdots  \ar[rr]^{ G_{n-1} } & &   \D^k (\tau_n)  \ar[rr]^{ G_n }  \ar[d]_{S^{(p)}_n}  & &    \D^k(\tau_{n+1})   \ar[rr]^{ G_{n+1} }  \ar[d]_{S^{(p)}_{n+1}}  & &\cdots  \\
  \cdots \ar[rr]^{ G^{\widehat p}_{n-1} } & &  \D^k(e^\epsi \tau_n)  \ar[rr]^{ G^{\widehat p}_n }                  & &     \D^k(e^\epsi \tau_{n+1})  \ar[rr] ^{ G^{\widehat p}_{n+1} }        & &\cdots  }   \]
where  $G^{\widehat p}_n = A_n^\gamma +  G_n^{(2)} + \cdots + G_n^{(p-1)}  + 0 + O(p+1)$.
\end{enumerate}
 \end{lem}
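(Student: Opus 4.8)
The plan is to solve the homological equation coordinate-by-coordinate and then control the domains of the resulting conjugacy uniformly along the $\Z$-chain. First I would expand everything in homogeneous parts: writing $A_n^\gamma$ for the linear part and $G_n^{(p)}$ for the $p$-homogeneous part, the conjugation $S_n^{(p)} = \Id + H_n$ transforms $G_n$ into a map whose $p$-homogeneous part is $G_n^{(p)} + H_{n+1}\circ A_n^\gamma - A_n^\gamma\circ H_n$; demanding that this vanish is exactly equation (1). To solve it, I would diagonalize (block-diagonalize along the $L_j$) the family $(A_n^\gamma)_n$ and look at the action of the linear operator $H \mapsto H\circ A_n^\gamma - A_n^\gamma\circ H$ on the finite-dimensional space of $p$-homogeneous maps: on the monomial $v \mapsto v^\alpha e_i$ (with $|\alpha| = p$, $e_i$ a basis vector of the $j$-th block) this operator is multiplication by a scalar whose modulus is comparable to $e^{\alpha\cdot\lambda^\gamma - \Lambda_j^\gamma}$, up to the $\epsi$-errors in the Oseledec estimates. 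The point of the spectral shift carried out in Subsection \ref{RC} is precisely that $\alpha\cdot\lambda^\gamma - \Lambda_j^\gamma$ stays outside $[-b,b]$ for all relevant $\alpha$, so these scalars are bounded away from $1$; hence the operator is invertible and $H_n$ is obtained by applying its inverse to $G_n^{(p)}$, with $\|H_n\|$ controlled by $\|G_n^{(p)}\|$ times a constant depending only on $b$ and the range of $|\alpha|$. Since $(G_n)_n$ is $\epsi$-slow and the $G_n$ are contractions on $\D^k(a_n)$, Cauchy estimates bound $\|G_n^{(p)}\|$ on a slightly smaller polydisc, giving a uniform (in $n$) bound on the size of $H_n$ on such a polydisc.

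Next I would set $S_n^{(p)} = \Id + H_n$ and choose the radii $\tau_n$. Because $H_n$ is $p$-homogeneous with $p\ge 2$ and uniformly bounded on a fixed-proportion subpolydisc, on a small enough polydisc $\D^k(\tau_n)$ the map $S_n^{(p)}$ is bi-Lipschitz with constants $e^{\pm\epsi'}$ — this is the standard estimate $\|H_n(u)-H_n(v)\| \le C\,(\max(\|u\|,\|v\|))^{p-1}\|u-v\|$, which is $\le (e^{\epsi'}-1)\|u-v\|$ once $\tau_n$ is comparable to a small constant times $a_n$. Taking $(\tau_n)_n$ proportional to $(a_n)_n$ keeps it $\epsi$-slow. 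One then checks $S_n^{(p)}$ maps $\D^k(\tau_n)$ into $\D^k(e^\epsi\tau_n)$, that $G_n$ still maps $\D^k(\tau_n)$ into $\D^k(\tau_{n+1})$ (using $\Lip G_n \le \theta e^\gamma < 1$ and $\epsi$-slowness, exactly as in the proof of Lemma \ref{decal}), and finally that the conjugated map $G_n^{\widehat p} := S_{n+1}^{(p)}\circ G_n\circ (S_n^{(p)})^{-1}$ has the announced Taylor expansion: its linear part is unchanged, its homogeneous parts of degrees $2,\dots,p-1$ are unchanged, its $p$-homogeneous part vanishes by (1), and the remainder is $O(p+1)$. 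Here one uses that $(S_n^{(p)})^{-1} = \Id - H_n + O(p+1)$ (a formal-inverse computation valid because $H_n$ has no terms of degree $< p$) so that composing shifts nothing below degree $p+1$.

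The main obstacle, and the only genuinely delicate point, is the \emph{uniformity in $n$ of all these estimates} along the bi-infinite chain: the invertibility bound on the homological operator, the Cauchy bounds on $\|G_n^{(p)}\|$, and the bi-Lipschitz radius $\tau_n$ must all be controlled by constants depending only on the Lyapunov data $(\Lambda_j)_j$ and on $b$, $\epsi$, not on $n$ — otherwise the $\epsi$-slow sequence $(\tau_n)_n$ could degenerate. This is exactly where Constraints \ref{nota} are used: the bound $\epsi([\Lambda_1^\gamma/\Lambda_l^\gamma]+3) < b$ guarantees that the $\epsi$-perturbations in the Oseledec estimates never swamp the resonance gap $b$ for any $|\alpha|$ in the allowed range, so the inverse of the homological operator has an $n$-independent norm, and the iteration over finitely many degrees $p = 2, 3, \dots, [\Lambda_1^\gamma/\Lambda_l^\gamma]+1$ (as will be done in Proposition \ref{thm1}) accumulates only a bounded loss. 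Everything else is a routine translation, for bundle chains, of the classical Sternberg-type normalization argument; as the text says, the full details are in \cite[Subsection 3.2]{BDM} or \cite{JV}, so I would only sketch these verifications.
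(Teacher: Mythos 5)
Your treatment of Item 2 (the bi-Lipschitz estimates for $S_n^{(p)}=\Id+H_n$ via homogeneity, the formal inverse $(S_n^{(p)})^{-1}=\Id-H_n+O(p+1)$, and the domain bookkeeping) matches the paper, which delegates these verifications to Lemma \ref{NNbis}. But your argument for Item 1 has a genuine gap. The homological equation is
\[ G_n^{(p)} + H_{n+1}\circ A_n^\gamma - A_n^\gamma\circ H_n = 0, \]
which couples \emph{two consecutive unknowns} $H_n$ and $H_{n+1}$; for each fixed $n$ it is underdetermined, and there is no finite-dimensional operator on $p$-homogeneous maps that you can invert to get ``$H_n$ from $G_n^{(p)}$''. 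The operator you propose to invert, $H\mapsto H\circ A_n^\gamma-A_n^\gamma\circ H$, is not the operator appearing in the equation (the paper's operator $\Gamma$ acts on \emph{sequences}, $(H_n)_n\mapsto(H_{n+1}\circ A_n^\gamma-A_n^\gamma\circ H_n)_n$), and in the nonautonomous setting the matrices $A_n^\gamma$ vary with $n$, so even the autonomous diagonalization you invoke (where the scalar would be $e^{-\alpha\cdot\lambda^\gamma}-e^{-\Lambda_j^\gamma}$, not something of modulus $e^{\alpha\cdot\lambda^\gamma-\Lambda_j^\gamma}$) is not available. Consequently your claim that ``$\|H_n\|$ is controlled by $\|G_n^{(p)}\|$ times a constant'' has no supporting mechanism.

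What is actually needed — and what the paper does — is to solve the chain equation by an explicit one-sided infinite series: when $\alpha\cdot\lambda^\gamma-\Lambda_j^\gamma>b$ one sets
\[ H_n := (A_n^{\gamma})^{-1}G_n^{(p)} + \sum_{r\ge 1}(A_n^\gamma)^{-1}\cdots(A_{n+r}^\gamma)^{-1}\,G_{n+r}^{(p)}\,A_{n+r-1}^\gamma\cdots A_n^\gamma, \]
and when $\alpha\cdot\lambda^\gamma-\Lambda_j^\gamma<-b$ one uses the analogous sum over $n-r$; the sign of $\alpha\cdot\lambda^\gamma-\Lambda_j^\gamma$ dictates in which direction along the orbit the series converges, and the convergence estimate is precisely where the gap $b$ and the constraint $\epsi([\Lambda_1^\gamma/\Lambda_l^\gamma]+3)<b$ enter (each term is bounded by $e^{-rb}e^{r([\Lambda_1^\gamma/\Lambda_l^\gamma]+3)\epsi}|a_n||z^\alpha|$, using the $\epsi$-slowness of the Taylor coefficients $(a_n)_n$ obtained from Cauchy estimates). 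In particular $H_n$ genuinely depends on the whole future or past of the chain — this is why the lemma requires sequences indexed by $\Z$, as the paper's Remark after Theorem B emphasizes — and no per-$n$ inversion can produce it. Your proposal is missing this central idea.
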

 
 \subsection*{Proof of Proposition \ref{thm1}}

Let $p^* := [\Lambda^\gamma_1 / \Lambda^\gamma_l] + 1$ and $\epsilon ' := \epsilon / p^*$. We successively apply  Lemma \ref{iter} with
 $$p = 2 \ , \ G_n = W_n^\gamma \ ,$$
and then with
$$ p=3 \ , \ G_n = (W_n^{\gamma})^{\widehat 2} \  , $$
and so on, up to 
$$p = p^*  \ , \ G_n = (\cdots ((((W^\gamma_n)^{\widehat {2}})^{\widehat 3})^{\widehat 4}) \cdots ) ^{\widehat{p^*-1}} \ . $$
This yieds a $\epsi$-slow sequence $(r_n)_n \leq (\rho_n)_n$ such that the following diagram commutes
\[  \xymatrix{
 \cdots  \ar[rr]^{ W^\gamma_{n-1} } & &   \D^k (r_n)  \ar[rr]^{ W^\gamma_n }  \ar[d]_{T^1_n}  & &    \D^k( r_{n+1})   \ar[rr]^{ W^\gamma_{n+1} }  \ar[d]_{T^1_{n+1}}  & &\cdots  \\
  \cdots \ar[rr]^{ X_{n-1} } & &  \D^k(e^{p^* \epsi} r_n)  \ar[rr]^{ X_n }                  & &     \D^k(e^{p^* \epsi} r_{n+1})  \ar[rr] ^{ X_{n+1} }        & &\cdots  }   \]
where $$X_n :=  (\cdots ((((W^\gamma_n)^{\widehat {2}})^{\widehat 3})^{\widehat 4}) \cdots ) ^{\widehat{p^*}} = A^\gamma_n  + O([\Lambda^\gamma_1 / \Lambda^\gamma_l] +2)$$
and $$ T^1_n := S^{(p^*)}_n \circ \cdots \circ S^{(2)}_n . $$
Since  $p^* \epsi < \frac{1}{2} \ln 4$ by Constraints \ref{nota}, we have  $T^1_n : \D^k(r_n) \to \D^k(2 r_n)$. Since $p^* \epsi' = \epsilon$, we also have  $e^{-\epsi} \abs{u-v} \leq \abs{T_n^1 (u) - T_n^1 (v)} \leq e^\epsi \abs{u-v}$ on $\D^k(r_n)$. \finsec

 \subsection*{Proof of Lemma \ref{iter}}
 
To prove Item 1 amounts to show that the following linear operator  $\Gamma$, defined on the space of slow sequences of $p$-homogeneous mappings, is surjective:
\[   \Gamma :  (H_n)_{n \in \Z}   \mapsto (H_{n+1} \circ A^\gamma_n - A^\gamma_n \circ H_n)_{n \in \Z} .  \]
To simplify we suppose that every exponent $\Lambda_j$ has multiplicity $k_j = 1$ (therefore $l=k$ and $\lambda_j = \Lambda_j$ for $1\le j\le k$). Let us fix $$ \al \in \N^k \textrm{ such that }  2 \leq \vert \alpha \vert \leq [\Lambda^\gamma_1 / \Lambda^\gamma_l] + 1.$$
We are going to exhibit a preimage for $\Gamma$ to the sequence 
$$G_n^{(p)} =  (0,\cdots, a_n z^\al , \cdots , 0) , $$
where $a_n z^\al$ is at the $j$-th place. 

For this purpose, we shall use the  control 
$\al_1 \lambda^\gamma_1 +\cdots +\al_k \lambda^\gamma_k - \Lambda^\gamma_j  \notin [-b , b] $
(see Subsection \ref{RC}) and 
 treat separately the  cases $\al_1 \lambda^\gamma_1 +\cdots +\al_k \lambda^\gamma_k - \Lambda^\gamma_j>b$ and $\al_1 \lambda^\gamma_1 +\cdots +\al_k \lambda^\gamma_k - \Lambda^\gamma_j< -b$. In the first case, we shall solve the equation $\Gamma ( (H_n)_{n \in \Z} ) = ( G_n ^{(p)})_{n \in \Z}$ by averaging on $n\geq 0$ (which correspond to the past of $\hat x$) and by averaging on  $n\leq 0$ (which correspond to the future of $\hat x$) in the second case.\\

$\bullet$ If $\al_1\lambda^\gamma_1 +\cdots +\al_k \lambda^\gamma_k -  \Lambda^\gamma_j > b$, we set 
\begin{equation}\label{plp} \forall n \in \Z \ \ , \ \ H_n := (A_n^{\gamma}) ^{-1} G_n^{(p)} + \sum_{r \geq 1} (A^\gamma_n) ^{-1} \cdots (A^\gamma_{n+r})^{-1} \ G_{n+r}^{(p)} \  A^\gamma_{n+r-1} \cdots A^\gamma_n . 
\end{equation}
A formal computation shows that $\Gamma ( (- H_n)_{n \in \Z} ) = ( G_n ^{(p)} )_{n \in \Z}$, which means:    
\[ \forall n \in \Z \ \ , \ \ G_n^{(p)} +  H_{n+1} \circ A^\gamma_n - A^\gamma_n \circ H_n  =  0  . \]

The convergence of the series (\ref{plp}) relies on the block diagonal property of $(A^\gamma_n)_{n \in \Z}$. To simplify, let us assume that the $(A^\gamma_n)$ are truly diagonal. Writing  $H_n$ as  $H_n = P_{n,0} + \sum_{r \geq 1} P_{n,r}$, we obtain 
\begin{displaymath}
\begin{array}{rcl}
 \abs{P_{n,r}(z)} &  \leq  &  \abs{ (A^\gamma_n) ^{-1} \, \cdots  \, (A^\gamma_{n+r}) ^{-1} \ G_{n+r}^{(p)} \, \left( \, e^{ - r \lambda^\gamma_1 + r \epsi} \, z_1 \, , \, \cdots \, , \, e^{- r \lambda^\gamma_k + r \epsi } \, z_k \, \right)  } \\
   &   \leq  &   \abs{ (A^\gamma_n) ^{-1} \, \cdots  \, (A^\gamma_{n+r}) ^{-1}  \ \left( 0 \, , \, \cdots \, , \, a_{n+r} \, e^{ - r (\, \al_1 \lambda^\gamma_1 \, + \, \cdots \, + \, \al_k  \lambda^\gamma_k \, ) + \abs{\al}  r \epsi } \, z^\al \, , \, \cdots \, , \, 0 \, \right)  }   \\
   &   \leq  &  e^{r \Lambda^\gamma_j + r \epsi} \,  e^{ - r ( \, \al_1 \lambda^\gamma_1 \, + \, \cdots \, + \, \al_k \lambda^\gamma_k \, ) + \abs{\al} r \epsi} \,  \abs{a_{n+r}} \,  \abs{z^\al} \\
   &   \leq  &  e^{ - r b}   e^{ r ( \abs{\al} + 1 ) \epsi } \,  \abs{a_{n+r}} \,  \abs{z^\al} \\
   &   \leq  &  e^{ - r b }  e^{ r ( [\Lambda^\gamma_1 / \Lambda^\gamma_l] + 3 ) \epsi } \,  \abs{a_n} \,  \abs{z^\al}.
\end{array}                                      
\end{displaymath} 
where the last inequality uses  $\vert \alpha \vert \leq [\Lambda^\gamma_1 / \Lambda^\gamma_l] + 1$ and the fact, easily checked by using  Cauchy's estimates, that  $(a_n)_{n \in \Z}$ is a $\epsi$-slow sequence. By Constraints \ref{nota}, $b  -  ( [\Lambda^\gamma_1 / \Lambda^\gamma_l] + 3 ) \epsi > 0$ and thus the series (\ref{plp}) converge.\\

$\bullet$ If $\al_1\lambda^\gamma_1 +\cdots +\al_k \lambda^\gamma_k -  \Lambda^\gamma_i < -b$, we proceed as before by setting 
\[ \forall n \in \Z \ \ , \ \ H_n := - G_{n-1}^{(p)} (A^\gamma_n)^{-1}  - \sum_{r \geq 1} A^\gamma_{n-1} \cdots A^\gamma_{n-r} \ G_{n-(r+1)}^{(p)} \ (A^\gamma_{n-r}) ^{-1} \cdots (A^\gamma_n)^{-1}  .\]

Let us now deal with Item 2 of Lemma \ref{iter}. We first set $S^{(p)}_n :=  \Id + H_n$ and observe that $(S^{(p)}_n)^{-1} = \Id - H_n + O(p+1)$ and \[ (S^{(p)}_{n+1})^{-1} \circ G_n \circ S^{(p)}_n    = \left( A^\gamma_n +  G_n^{(2)} + \cdots + G_n^{(p-1)}  \right) + \left(  G_n^{(p)} +  H_{n+1} \circ A^\gamma_n - A^\gamma_n \circ H_n  \right) + O(p+1). \]
The second term in the right hand side vanishes by construction of $(H_n)_n$. Finally, Lemma \ref{NNbis} on bundle maps ensures the existence of a $\epsi$-slow sequence $(\tau_n)_n$ satisfying the required properties.  \\

In order to prove the general case where $G_n^{(p)}$ is a $p$-homogeneous map but not a monomial map, we proceed by linearity in Item 1 to obtain:
\[ G_n^{(p)} +  \sum  H^{j,\alpha}_{n+1} \circ A^\gamma_n - A^\gamma_n \circ \sum  H^{j,\alpha}_n  =  0  , \]
where the sum ranges over $j \in \{ 1 , \cdots , l \}$ et $\vert \alpha \vert = p$.
Item 2 then follows by performing the change of coordinates $S^{(p)}_n :=  \Id +  \sum  H^{j,\alpha}_n$. \fin


\subsection*{Step 3: Conjugation to a linear mapping and conclusion}

Proposition \ref{iter2} below is a special case of Theorem 1.1 in \cite{BDM}, it shows that a slow sequence of holomorphic mappings $(X_n)_{n \in \Z}$ is conjugated to the sequence of its linear part  $(A^\gamma_n)_{n \in \Z}$ once these two sequences have a sufficiently large contact order. Let us stress that this step does not require the fact that  $(A_n^\gamma)_{n \in \Z}$ is block diagonal. 
 
 \begin{prop}\label{iter2} Let $(X_n)_{n \in \Z}$ be a $\epsi$-slow sequence of holomorphic mappings with linear parts  $(A^\gamma_n)_{n \in \Z}$. Assume that there exist $0 < m \leq M <1$ such that
\[ \forall n \in \Z \ \ , \ \  \forall v  \in \C^k \ \ , \ \ m \abs v \leq \abs{A^\gamma_n (v)} \leq M \abs v   \] 
and that
 \[  X_n = A^\gamma_n + O(q+1) \ \  \textrm{ where $q \geq 1$ satisfies  } \ \ ( M e^{2\epsi} ) ^{q+1} < m e^{-\epsilon} .  \] 
Then there exist a $\epsi$-slow sequence $(r_n)_{n\in \Z}$ and a sequence $\left(T_n^2\right)_{n\in Z}$ of holomorphic mappings $T_n^2 :  \D^k (r_n)  \to  \D^k (2r_n)$ such that $d_0 T_n^2  = \Id$ and 
\begin{enumerate}
\item $\forall n \in \Z$, $\forall (u,v) \in \D^k(r_n) \times \D^k(r_n)$, $e^{-\epsi} \abs{u-v} \leq
  \abs{T_n^2 (u) - T_n^2 (v)} \leq e^\epsi \abs{u-v}$ ,
\item the following diagram commutes:
\[  \xymatrix{
 \cdots  \ar[rr]^{ X_{n-1} } & &   \D^k (r_n)  \ar[rr]^{ X_n }  \ar[d]_{T^2_n}  & &    \D^k(r_{n+1})   \ar[rr]^{ X_{n+1} }  \ar[d]_{T^2_{n+1}}  & &\cdots  \\
  \cdots \ar[rr]^{ A^\gamma_{n-1} } & &  \D^k(2r_n)  \ar[rr]^{ A^\gamma_n }                  & &     \D^k(2r_{n+1})  \ar[rr] ^{ A^\gamma_{n+1} }        & &\cdots  }   \]
\end{enumerate}
\end{prop}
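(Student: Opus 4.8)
The plan is to build the conjugating sequence $(T_n^2)_{n\in\Z}$ directly as a limit of compositions of the $X_n$ and $A_n^\gamma$, exactly as one does in the classical (single–map) linearization of a contraction. The natural candidate is
\[
T_n^2 := \lim_{N\to+\infty} \left(A_{n+N-1}^\gamma\right)^{-1}\circ\cdots\circ \left(A_n^\gamma\right)^{-1}\circ X_{n+N-1}\circ\cdots\circ X_n,
\]
so that $T_n^2$ intertwines $X_n$ with $A_n^\gamma$: indeed $A_n^\gamma \circ T_n^2 = T_{n+1}^2\circ X_n$ follows formally by shifting the index in the composition. First I would fix a $\epsi$-slow sequence $(r_n)_n\le(\rho_n)_n$ small enough that all the $X_m$ are defined on $\D^k(2r_m)$ and map into the next disc of the chain, using that $(r_n)_n$ can be chosen $\epsi$-slow and that $\Lip X_n$ is controlled (as in Lemma \ref{abd} and the construction in Lemma \ref{tiroir}). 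Then I would show inductively that the partial compositions $T_{n,N}:=\left(A_{n+N-1}^\gamma\right)^{-1}\circ\cdots\circ\left(A_n^\gamma\right)^{-1}\circ X_{n+N-1}\circ\cdots\circ X_n$ are well defined from $\D^k(r_n)$ into $\D^k(2r_n)$ for all $N$: the forward composition $X_{n+N-1}\circ\cdots\circ X_n$ shrinks radii like $(Me^{2\epsi})^N$ up to slow factors, and then applying $N$ inverse linear maps expands by at most $m^{-N}e^{N\epsi}$-type factors; the hypothesis $(Me^{2\epsi})^{q+1}<me^{-\epsi}$ is precisely what makes the relevant product of these two rates summable once one also pays the price of the error term $O(q+1)$.

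The heart of the argument is the convergence estimate. Writing $X_m = A_m^\gamma + R_m$ with $R_m = O(q+1)$, I would estimate the difference $T_{n,N+1}-T_{n,N}$ on $\D^k(r_n)$. Telescoping, this difference is a sum of terms in which one factor $X_m$ has been replaced by $R_m$; each such term is bounded, via Cauchy estimates on $R_m$ applied at the scale $r_m$ (which by slowness is comparable to $r_n$ up to $e^{\pm|m-n|\epsi}$), by a constant times
\[
\left(\text{product of linear expansions } m^{-1}\right)\cdot\left(\text{radius reached by the forward composition}\right)^{q+1},
\]
and the forward radius is of order $r_n (Me^{2\epsi})^{m-n}$. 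Collecting the exponents one gets a geometric bound in $|m-n|$ with ratio $(Me^{2\epsi})^{q+1} m^{-1}e^{\epsi}<1$, so $(T_{n,N})_N$ is uniformly Cauchy on $\D^k(r_n)$ and converges to a holomorphic $T_n^2:\D^k(r_n)\to\overline{\D^k(2r_n)}$ (shrinking $r_n$ slightly, into $\D^k(2r_n)$). The identity $d_0T_n^2=\Id$ is immediate since every $T_{n,N}$ has linear part $\Id$ (the linear parts cancel), and the intertwining relation passes to the limit. To upgrade the mere convergence into the bi-Lipschitz bound $e^{-\epsi}|u-v|\le|T_n^2(u)-T_n^2(v)|\le e^\epsi|u-v|$ of Item 1, I would note that $T_n^2 = \Id + (\text{something }O(q+1)\text{ and uniformly small on }\D^k(r_n))$: each correction $T_{n,N}-\Id$ is $O(q+1)$ and bounded by the same geometric series times $r_n^{q}$, so after one final shrink of $(r_n)_n$ (still keeping it $\epsi$-slow, using Lemma \ref{NN}/\ref{NNbis} on tame bundle maps as invoked elsewhere in the paper) the derivative of $T_n^2$ is within $e^{\pm\epsi}$ of the identity, and the mean value inequality on the convex disc gives Item 1.

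The main obstacle I expect is purely bookkeeping: keeping the $\epsi$-slow sequence $(r_n)_n$ under control through several successive shrinkings (domain of definition of the $X_n$, convergence of the series, bi-Lipschitz estimate) while making sure the exponential losses $e^{\pm|m-n|\epsi}$ coming from slowness never overwhelm the geometric gain $(Me^{2\epsi})^{q+1}m^{-1}$. This is exactly the kind of estimate handled by the ``tame bundle map'' lemmas (Lemma \ref{NN}, Lemma \ref{NNbis}) recalled in the appendix and already used in Sections \ref{affchart}--\ref{OR}, so I would package the final radius adjustment through them rather than redo it by hand. Since the statement is explicitly only ``a special case of Theorem 1.1 in \cite{BDM}'', I would in fact simply verify that the hypotheses of that theorem are met — a slow sequence of holomorphic maps with common linear parts $(A_n^\gamma)$ satisfying $m|v|\le|A_n^\gamma(v)|\le M|v|$ and a contact order $q$ with $(Me^{2\epsi})^{q+1}<me^{-\epsi}$ — and invoke it, indicating that the proof is the chain-of-contractions version of the argument sketched above. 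Note that this step, unlike Step 2, does not use the block-diagonal structure of $(A_n^\gamma)$: only the two-sided spectral bounds $m,M$ enter.
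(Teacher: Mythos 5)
Your proposal follows essentially the same route as the paper: the same candidate $T_n^2=\lim_p (A^\gamma_{n+p}\circ\cdots\circ A^\gamma_n)^{-1}\circ X_{n+p}\circ\cdots\circ X_n$, convergence via a geometric bound with ratio $(Me^{2\epsi})^{q+1}/m<1$ controlling the consecutive differences (the paper phrases this as an induction rather than a telescoping sum, but the estimate is the same), and the final bi-Lipschitz statement obtained by shrinking the slow sequence through the tame bundle map lemmas; the paper likewise reduces to Theorem 1.1 of \cite{BDM} and notes that block-diagonality is not needed here. No gaps.
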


 \subsection*{Proof of Proposition \ref{iter2}}
To deduce the Proposition from Theorem 1.1 of \cite{BDM} we observe that the proof of this theorem starts by fixing a number $\theta$ such that $M^{q+1}/m<\theta<1$ and then chosing $\epsilon >0$ small enough so that $Me^{2\epsilon} \le 1$ and $e^{(q+3)\epsilon} M^{q+1}/m \le \theta$. Our assumption $(M e^{2\epsi} ) ^{q+1} < me^{-\epsilon}$ implies both of these conditions, with $\theta=e^{-q\epsilon}$ for the second one.

For the reader's convenience we now sketch the proof of Proposition \ref{iter2},  details can be found in  \cite[Subsection 3.1]{BDM}. 
Let $n \in \Z$ and let 
\[ \forall p \geq 0 \ ,  \ A^\gamma_{p,n} := A^\gamma_{n+p} \circ \cdots \circ A^\gamma_n \ \ , \ \ X_{p,n} := X_{n+p} \circ \cdots \circ X_n \]
with the convention $A^\gamma_{-1,n} = X_{-1,n} = \Id$. The germs of formal limits 
\[ T^2_n := \lim_{p \to + \infty} (A^\gamma_{p,n}) ^{-1} \circ X_{p,n} \] 
satisfy $d_0 T_n^2 = Id$ and give the commutative diagram of Proposition \ref{iter2}.  Let us show the convergence by induction. Let $\beta := (Me^{2\epsi}) ^{q+1} / m < 1$ and consider the assertion
\[ \PP(p) \ : \ \forall n \in \Z \ , \  \abs{ ((A^\gamma_{p,n}) ^{-1} \circ X_{p,n}  - (A^\gamma_{p-1,n}) ^{-1} \circ X_{p-1,n}) (v)  } \leq ( \beta^{p}  /   m r_n^{q+1}  )   \, \abs{v}^{q+1}  .   \]
Then $\PP(0)$ is satisfied: the property $X_n = A^\gamma_n + O(q+1)$ indeed shows that  
 \[  \forall n \in \Z \ , \  \abs{ ( (A^\gamma_n)^{-1} \circ X_n  - \Id) (v)  } = \abs{  (A^\gamma_n)^{-1} \circ ( X_n  - A^\gamma_n) (v)  }  \leq  ( 1  /   m r_n^{q+1}  ) \abs{v}^{q+1} .   \]
The real numbers $m$ and $r_n^{q+1}$ respectively come from the lower bound $m \abs v \leq \abs{A^\gamma_n (v)}$ and from an estimate of $(X_n  - A^\gamma_n) (v)$ using Cauchy's estimates. Now we show that $\PP(p)$ implies $\PP(p+1)$. Let us perform a  right composition of $\PP(p)$ by $X_n$ (we have also replaced $n$ by $n+1$):  
 \[ \forall n \in \Z \ , \   \abs{ ((A^\gamma_{p,n+1})^{-1} \circ X_{p,n+1}  - (A^\gamma_{p-1,n+1}) ^{-1} \circ X_{p-1,n+1} ) (X_n(v))  } \leq ( \beta^{p}  /   m r_{n+1}^{q+1}  ) \abs{X_n(v)}^{q+1} .    \]
Then, by using the identity $X_{p,n+1} \circ X_n = X_{p+1,n}$ and the comparison of $X_n$ to its linear part $A_n^\gamma$ given by Lemma \ref{NN}:
  \[ \forall n \in \Z \ , \   \abs{ ((A^\gamma_{p,n+1})^{-1} \circ X_{p+1,n }  - (A^\gamma_{p-1,n+1})^{-1} \circ X_{p,n} ) (v)  } \leq  ( \beta^{p}  /   m r_{n+1}^{q+1} ) (M e^\epsi)^{q+1} \abs{v}^{q+1} .    \]
A left composition by $(A^\gamma_n)^{-1}$ gives by using the fact that  $(r_n)_n$ is slow: 
 \[  \forall n \in \Z \ , \   \abs{ ((A^\gamma_{p+1,n})^{-1} \circ X_{p+1,n }  - (A^\gamma_{p,n})^{-1} \circ X_{p,n} ) (v)  } \leq ( \beta^{p}  /   m r_n^{q+1} ) \, {(M e^{2\epsi}) ^{q+1} \over m }  \abs{v}^{q+1} .   \]
This last quantity is  equal to $( \beta^{p+1}  /   m r_n^{q+1} )  \abs{v}^{q+1}$, which shows $\PP(p+1)$. Finally, it suffices to apply Lemma \ref{NNbis} to end the proof of Proposition \ref{iter2}.\fin \\

To complete the proof of  Theorem B, we successively apply Lemma \ref{decal}, Proposition \ref{iter} and Proposition \ref{iter2}  with 
$$  q = [\Lambda^\gamma_1 / \Lambda^\gamma_l] +1 \ \ , \ \ m = e^{-\Lambda^\gamma_1 - \epsilon + \gamma} \ \ , \ \ M  = e^{-\Lambda^\gamma_l + \epsilon + \gamma} . $$
We have $m \abs v \leq \abs{A^\gamma_n (v)} \leq M \abs v$ from  Lemma \ref{decal} and $(M e^{2\epsi})^{q+1} < m e^{-\epsilon}$ from Lemma  \ref{donne} (a consequence of Constraints \ref{nota}). Theorem B follows by setting $$\varphi_n := T^2_n \circ T^1_n \circ \Delta_n .$$
This completes the proof of Proposition \ref{iter2}.

\section{Applications}

We review here some results whose proofs crucially rely on our Distortion Theorem. Proving similar results in the one-dimensional setting  requires the classical Koebe distortion Theorem.

\subsection{Multipliers of repelling cycles}\label{mult}

The following result, first proved in \cite{BDM}, plays an important role in the study of bifurcations within holomorphic families of endomorphisms (see \cite{Be} and \cite{BBD}).

\begin{thm}\label{replyap}
Let $f : \Pj^k \to \Pj^k$ be a holomorphic endomorphism of degree $d \geq 2$.  Let $\lambda_k \leq \cdots \leq \lambda_1$ be the Lyapunov exponents of its equilibrium measure. Then 
\[ \lim_{n \to + \infty}  {1 \over d^{kn}} \sum_{p \in \RR_n}  \log \Jac f(p)  = (\lambda_1 + \cdots + \lambda_k)  \]
where $\RR_n$ is the set  of $n$-periodic repelling points of $f$.
\end{thm}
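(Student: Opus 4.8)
The plan is to reduce the periodic-point statement to an equidistribution statement plus the Distortion Theorem A. First I would recall that the repelling $n$-cycles equidistribute towards the equilibrium measure $\mu$: more precisely, $d^{-kn}\sum_{p\in\RR_n}\delta_p \to \mu$ weakly, and in fact the vast majority of points of $\RR_n$ are ``$\mu$-generic'' in a quantitative sense (this is classical, going back to Briend--Duval; see also \cite{DS}). The quantity $\log\Jac f(p)$ for $p$ an $n$-periodic point is a Birkhoff sum: writing $p,f(p),\dots,f^{n-1}(p)$ for the orbit, $\log\Jac f(p) = \log|\det d_pf^n|$ up to the factor coming from the ambient Hermitian metric, so $\tfrac1n\log\Jac f(p)$ is a time-$n$ Birkhoff average of the function $x\mapsto \log|\det d_xf|$ along the periodic orbit. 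Hence $d^{-kn}\sum_{p\in\RR_n}\log\Jac f(p)$ should converge, after dividing by $n$, to $\int \log|\det d_xf|\,d\mu(x) = \lambda_1+\cdots+\lambda_k$ by Oseledec; but one wants the statement \emph{without} the $1/n$ normalization, which is sharper and is exactly where Theorem A enters.

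The key steps, in order, would be: (1) For $\hat\mu$-a.e. $\hat x$ fix the linearizing data $(\varphi_{\hat x,n}, D_{\hat x,n}, r_{\hat x},\dots)$ of Theorem A, and observe that the commuting diagram identifies the inverse branch $f^{-n}_{\hat x}$ on $B_{x_0}(r_{\hat x})$, up to the bi-Lipschitz maps $\varphi_{\hat x,0}$ and $\varphi_{\hat x,n}$, with the linear map $D_{\hat x,n}=A^\gamma_{n-1}\circ\cdots\circ A^\gamma_0$. Since the $A^\gamma_j$ are block-diagonal with the prescribed norm bounds, $\log|\det D_{\hat x,n}| = -n(\Lambda_1+\cdots+\Lambda_k) + n\gamma k + O(n\epsilon)$; more usefully, chasing the diagram gives, for the \emph{forward} map, $\log\Jac f^n(x_{-n}) = -\log|\det d_0 (f^{-n}_{\hat x})| + (\text{bounded distortion terms}) = \log|\det D_{\hat x,n}|^{-1} + O(n\epsilon) + O(\log h_{\hat x})$. (2) Use the equidistribution of $\RR_n$ toward $\hat\mu$ to replace the sum over periodic points by an integral against $\hat\mu$: a repelling $n$-cycle $\{p,f(p),\dots\}$ with $p$ close to a generic point $x_0$ has an inverse branch $f^{-n}$ fixing a point near $x_{-n}$, and $\log\Jac f^n$ at that fixed point is comparable to $\log|\det D_{\hat x,n}|^{-1}$. (3) Control the error terms uniformly: here one exploits that $h_{\hat x}$, $r_{\hat x}$, $\rho_{\hat x}$ are measurable and $\epsilon$-slow, hence essentially bounded on a set of $\hat\mu$-measure close to $1$, so the contribution of the ``bad'' set is negligible after normalizing by $d^{kn}$ and the errors on the good set contribute at most $O(\epsilon)$ per unit, giving in the limit an error $O(\epsilon)$; letting $\epsilon\to 0$ finishes. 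The cleanest route to (2)--(3) is probably to follow the argument of \cite[Subsection 3.2 and 4]{BDM}, of which this Theorem is the main output, using Theorem A as the replacement for Koebe: the point is that Theorem A gives a genuine bi-Lipschitz (indeed almost-linear) model for $f^{-n}_{\hat x}$ on a ball of controlled radius $r_{\hat x}e^{-n(\gamma+2\epsilon)}$, which is exactly what one needs to locate and count repelling points with the correct multiplicity and to evaluate their Jacobians.

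The main obstacle I expect is step (3), the uniform control of the distortion constants: the radius $r_{\hat x}e^{-n(\gamma+2\epsilon)}$ shrinks exponentially while the number of inverse branches grows like $d^{kn}$, and $h_{\hat x}$ is only $\hat\mu$-integrable-type controlled, not bounded, so one must argue carefully that the set where $h_{\hat x}$ is large — and where the model of Theorem A is only valid on a very small ball — carries a proportion of repelling $n$-cycles that tends to $0$, using the equidistribution statement together with the $\epsilon$-slowness to get an honest Egorov-type uniformity. A secondary subtlety is the exchange of limits: one must first take $n\to\infty$ with $\gamma,\epsilon$ fixed and only afterwards let them tend to $0$, which forces all intermediate estimates to be explicit in $\gamma,\epsilon$ (this is why Constraints \ref{nota} are phrased to depend only on the $\Lambda_j$). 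Since the statement is quoted from \cite{BDM} and the present paper's purpose is to isolate Theorem A as the abstract tool, I would present this section as a guided reduction rather than a from-scratch proof, citing \cite{BDM} for the combinatorial counting of repelling points and emphasising only how Theorem A substitutes for the one-dimensional Koebe theorem in that argument.
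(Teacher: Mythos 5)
Your proposal follows essentially the same route as the paper's sketch: Briend--Duval-type counting of contracting inverse branches on a ball around a $\mu$-generic point to equidistribute the repelling cycles, then Theorem A to identify $\log \Jac f^n$ at each resulting repelling fixed point with its value along the generic backward orbit (equivalently with $\log|\det D_{\hat x,n}|^{-1}$) up to $e^{O(n\epsilon)}$ and $h_{\hat x}$ errors, controlled via the natural extension and the $\epsilon$-slow functions before letting $\epsilon\to 0$. The points you flag as delicate (Egorov-type uniformity in $h_{\hat x}$, order of limits in $n$, $\gamma$, $\epsilon$) are exactly the ones the paper defers to \cite{BDM}, so no substantive difference.
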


\textsc{Sketch of proof:}
 To start with, we reprove that  the equilibrium measure $\mu$ of $f$ equidistributes the repelling cycles of $f$
 (this is a theorem due to Lyubich \cite{L} for $k=1$ and Briend-Duval \cite{BD} for $k\ge 1$) we follow here Briend-Duval approach. Let $B : = B_x(r)$ be a small ball around  a $\mu$-generic point $x$. Since $\mu$ is mixing, we have $\mu(f^{-n}B \cap B) \simeq \mu(B)^2$ for $n$ large enough. Now let $\FF_n(B)$ be the set of inverse branches $g_n$  of $f^n$ defined on $B$ and with image in $B$. By using $f^*\mu = d^k \mu$ and the fact that the inverse branches are pairwise disjoint, the mixing property gives $\cdb \FF_n(B)  \cdot\mu(B) / d^{kn} \simeq \mu (B)^2 $, therefore:
\[   {1  \over d^{kn}} \cdb \FF_n(B)   \simeq \mu (B) . \] 
Since the Lyapunov exponents of  $\mu$ are positive, every element $g_n$ of $\FF_n(B)$ is a contracting map from $B$ to $B$, hence produces a repelling point $p$ for the iterate $f^n$. This implies
\[  {1 \over d^{kn}} \cdb \RR_n \cap B \geq \mu(B) . \]
Thus every cluster measure $\mu'$ of  ${1 \over d^{kn}} \sum_{p \in \RR_n} \delta_p$ satisfies $\mu' \geq \mu$. This implies $\mu' = \mu$  since the number of $n$-periodic point of $f$ is bounded above by $d^{kn}$, see \cite[Proposition 1.3]{DS}.\\

To obtain Theorem \ref{replyap}, we combine the Distortion Theorem with the above arguments to get:
\begin{equation}\label{gqd}
\forall g_n \in \FF_n(B) \ , \ \forall p \in g_n(B) \ , \ d_p f^n \simeq d_{g_n(x)} f^n .  
\end{equation}
Since $x$ is $\mu$-generic, we deduce from (\ref{gqd}) and from the definition of the Lyapunov exponents:
 \begin{equation*}\label{estsi}
 \  {1 \over n} \log \vert  \Jac f^n(p)\vert  \ \simeq \  (\lambda_1 + \cdots + \lambda_k). 
\end{equation*}
Let us specify that to make the approximations $\simeq$  precise, one has to work with the natural extension of $f$ and use the estimate concerning the change of coordinates $\varphi_{\hat x , n}$.

\subsection{Dimension of measures}\label{minor}

Let $f : \Pj^k \to \Pj^k$ be a holomorphic endomorphism and $\nu$ be an invariant measure.
One defines the pointwise dimensions $\underline{d}_{\nu}$ and $\bar{d_{\nu}}$ by
$$\forall x \in \Pj^k \ , \ \underline{d}_{\nu}(x)  := \liminf_{r \to 0} \, { \log \nu (B_x(r)) \over  \log r } \ \ , \ \ \bar{d_{\nu}}(x) :=  \limsup_{r \to 0} \, { \log \nu (B_x(r)) \over  \log r } .$$
When $\nu$ is ergodic, these functions are  $\nu$-almost everywhere constant and their generic values are denoted  $\underline{d}_{\nu}$ and $\bar{d_{\nu}}$. Young (\cite{Y}) proved that if $ a \leq \underline{d}_{\nu} \leq \bar{d_{\nu}} \leq b$, then
$$a \leq \dim_H(\nu) \leq b,$$ 
where $\dim_H(\nu) := \inf \{\dim_H(A), A \textrm{ Borel set } , \nu(A) = 1 \}$ is the Hausdorff dimension of $\nu$.\\

For the equilibrium measure $\mu$ of any degree $d\ge 2$ holomorphic endomorphism on $\Pj^k$,
it has been conjectured by Binder and DeMarco \cite{BDeM} that
\[  \dim_H (\mu)  = { \log d \over \lambda_1 } + \cdots + { \log d \over \lambda_k } \] 
where $\lambda_k \leq \cdots \leq \lambda_1$ are the Lyapunov exponents of $\mu$.

When $k=1$, this conjecture corresponds to a result of Ma\~n\'e \cite{M} who actually proved  that $\underline d_{\mu} = \bar d_\mu = \log d / \lambda$.  Our Distortion Theorem allows to obtain the following lower bounds in any dimension. 

\begin{thm}[Dupont \cite{D1}]\label{mino} Let $f : \Pj^k \to \Pj^k$ be a holomorphic endomorphism of degree $d \geq 2$. Let $\nu$ be an ergodic measure with positive Lyapunov exponents $\lambda_k \leq \cdots \leq \lambda_1$ and  entropy $h_\nu$. Then
$$ \underline{d}_{\nu} \geq {\log d^{k-1} \over \lambda_1} + { h_\nu -  \log d^{k-1} \over \lambda_k }  .$$
\end{thm}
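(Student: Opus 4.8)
The plan is to use Theorem A (in the form extended to the ergodic measure $\nu$ with positive Lyapunov exponents, as indicated at the end of Section~1) to transfer the estimate of $\nu(B_x(r))$ to the linearized picture, where balls become (comparable to) polydiscs on which the dynamics acts by the block-diagonal linear maps $D_{\hat x,n}$. First I would pass to the natural extension $(\orb,\tau,\hat\nu)$ and work $\hat\mu$-almost everywhere (here $\hat\nu$-a.e.) on the set $X$ of orbits avoiding the critical set, so that the maps $\varphi_{\hat x,n}$, the constants $h_{\hat x}, r_{\hat x}, \rho_{\hat x}$, and the integer $n_{\hat x}$ of Theorem~A are available and slowly varying. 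The point is that $\varphi_{\hat x,0}$ is a bi-Lipschitz chart near $x_0$ with uniformly (in a slow sense) controlled distortion, and under it the inverse branch $f^{-n}_{\hat x}$ becomes the linear map $D_{\hat x,n}=A^\gamma_{n-1}\circ\cdots\circ A^\gamma_0$ which stretches the $L_j$-direction by $e^{-n\Lambda_j+n(\gamma\pm\epsilon)}$.

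The heart of the argument is a covering/counting estimate. Fix a small radius $r$ and choose $n=n(r)$ so that $f^{-n}_{\hat x}\big(B_{x_0}(r_{\hat x})\big)$ has diameter roughly $r$: since the smallest contraction rate is $e^{-n\Lambda_1}$ and the largest is $e^{-n\Lambda_k}$ (up to $e^{\pm n(\gamma+\epsilon)}$), the set $E^{-n}_{\hat x}$ is trapped between balls of radii $\approx r_{\hat x}e^{-n\Lambda_1}$ and $\approx r_{\hat x}e^{-n\Lambda_k}$. One covers $E^{-n}_{\hat x}$ by balls of radius $\approx r_{\hat x}e^{-n\Lambda_1}$; in the linearized polydisc coordinates such a ball pulls back, under $D_{\hat x,n}^{-1}$ composed with $\varphi_{\hat x,0}$, to a region of controlled shape, and the number of such balls needed is governed by the ratio of the contraction rates, i.e.\ by $\prod_{j}e^{-n\Lambda_1+n\Lambda_j}$ in the worst case — but this is exactly where the entropy must enter to do better than the trivial volume bound. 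I would instead estimate $\nu\big(E^{-n}_{\hat x}\big)$ from below: by $f^n$-invariance $\nu\big(E^{-n}_{\hat x}(t)\big)$ relates to $\nu\big(B_{x_0}(tr'_{\hat x})\big)$ weighted by the number of preimage branches landing there, and the Shannon--McMillan--Breiman theorem gives that a $\mu$-generic backward orbit is one of $\approx e^{nh_\nu}$ such branches, while the total number of $n$-th preimages is $d^{kn}$; so a single branch carries $\nu$-mass $\approx e^{-nh_\nu}$ of the corresponding piece. Combining this with the geometric control — each piece $E^{-n}_{\hat x}$ is comparable, via Theorem~A and the Corollary, to an ellipsoid with semi-axes $r_{\hat x}e^{-n\Lambda_j}$ — and covering by round balls of the smallest radius, one gets that $B_y(r)$ for $r\approx r_{\hat x}e^{-n\Lambda_1}$ meets at most $\approx e^{nh_\nu}/d^{kn}\cdot(\text{something})$ pieces, leading after taking $\tfrac{\log\nu(B_y(r))}{\log r}$ and letting $r\to 0$ (so $n\to\infty$) to the claimed bound $\tfrac{\log d^{k-1}}{\lambda_1}+\tfrac{h_\nu-\log d^{k-1}}{\lambda_k}$, where the split reflects that $h_\nu-\log d^{k-1}$ worth of entropy is ``absorbed'' at the slowest rate $\lambda_k$ and the residual $\log d^{k-1}$ at the fastest rate $\lambda_1$.

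The main obstacle — and the step requiring real care — is making the covering argument quantitatively sharp enough to produce the specific splitting between $\lambda_1$ and $\lambda_k$ rather than a cruder bound involving an average of the $\lambda_j$'s. This is precisely the point where one cannot treat $E^{-n}_{\hat x}$ as a round ball: its anisotropic ellipsoidal shape, controlled by Theorem~A's direction-by-direction estimates on $D_{\hat x,n}$ along the $L_j$, together with the bounded convexity defect from the Corollary, is what allows an efficient covering by balls of radius $r_{\hat x}e^{-n\Lambda_1}$ using only $\approx\prod_{j\ge 2}e^{n(\Lambda_j-\Lambda_1)}$ of them along the thin directions, and this exponent must be balanced against the entropy count $e^{nh_\nu}$ versus $d^{kn}=e^{nk\log d}$. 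Controlling the $\epsilon$-slow fluctuations of $r_{\hat x},h_{\hat x}$ uniformly along the orbit (so that they contribute only $e^{o(n)}$ and wash out in the limit) and handling the dependence $n=n(r)$ consistently are the remaining technical points; the detailed execution is carried out in Dupont~\cite{D1}.
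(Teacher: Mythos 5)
There is a genuine gap at the heart of your argument, and it is exactly the step you defer to \cite{D1}: you never produce the counting estimate that creates the term $\log d^{k-1}/\lambda_1$. The paper's proof hinges on a specific \emph{Fact}: a generic ball $B_x(e^{-n\lambda_k})$ (note: radius given by the \emph{smallest} exponent $\lambda_k$, i.e.\ the \emph{largest} semi-axis of the inverse-branch parallelepipeds — not $e^{-n\Lambda_1}$ as in your set-up) meets at most $d^{(k-1)(n-q_n)}e^{n\epsilon}$ inverse branches of $f^n$, where $q_n=[n\lambda_k/\lambda_1]$. This count is obtained by an area-growth argument that is entirely absent from your proposal: one bounds $\int_{\D^{k-1}}(f^m\circ\eta)^*\omega^{k-1}\le d^{(k-1)m}$ for holomorphic $(k-1)$-polydiscs via the Green current, applies it with $m=n-q_n$ after first pushing the small ball forward by $f^{q_n}$ to unit size (this delay is where $\lambda_k/\lambda_1$, hence the $1/\lambda_1$ in the final formula, enters), and then uses the Distortion Theorem to show that each branch piece, being a parallelepiped with smallest dimension $e^{-n\lambda_1}$ transverse to a suitable polydisc from a family of cardinality $e^{n\epsilon}$, contributes area at least $1$ to the image. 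Disjointness of the branches then gives the count. Your proposed substitutes — counting by the ``ratio of contraction rates $\prod_j e^{-n\Lambda_1+n\Lambda_j}$'' or by ``$e^{nh_\nu}/d^{kn}\cdot(\text{something})$'' — cannot yield $d^{(k-1)(n-q_n)}$: the first is a purely geometric volume comparison with no trace of the degree $d$, and the second confuses a measure ratio with a cardinality.

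The parts you do have right (natural extension, Theorem A giving the ellipsoidal shape of the branches, Brin--Katok giving mass $\approx e^{-nh_\nu}$ per branch, slow functions contributing only $e^{o(n)}$) match the paper's scheme, and once the Fact is granted the conclusion follows exactly as you indicate: $\nu(B_x(e^{-n\lambda_k}))\lesssim \mathrm{Card}\,\PP_n(x)\cdot e^{-nh_\nu}$, and dividing logs by $\log(e^{-n\lambda_k})$ gives $\frac{h_\nu-(k-1)(1-\lambda_k/\lambda_1)\log d}{\lambda_k}=\frac{\log d^{k-1}}{\lambda_1}+\frac{h_\nu-\log d^{k-1}}{\lambda_k}$. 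But since the area/slicing mechanism is the one genuinely new idea of the proof and the only source of the exponent splitting, the proposal as written does not constitute a proof.
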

The following corollary yields the lower bound of the above Conjecture in dimension $k=2$.
\begin{cor}
Let $f : \Pj^2 \to \Pj^2$ be a holomorphic endomorphism of degree $d \geq 2$. Let $\mu$ be its equilibrium measure and let $\lambda_1 \geq \lambda_2$ be the Lyapunov exponents of $\mu$. Then
$$   \underline d_\mu  \geq  { \log d \over \lambda_1 } +  { \log d \over \lambda_2 }. $$
\end{cor}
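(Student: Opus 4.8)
The plan is to apply Theorem \ref{mino} directly to the equilibrium measure $\mu$ of $f : \Pj^2 \to \Pj^2$, so really all the work has been done already and only a numerical verification remains. First I would recall the two facts about $\mu$ that make this immediate: by \cite{BD} (or \cite[Section 1.7]{DS}) the Lyapunov exponents of the equilibrium measure satisfy $\lambda_i \ge \log\sqrt d > 0$, so in particular $\mu$ has positive Lyapunov exponents and Theorem \ref{mino} applies; and by the variational principle for $\mu$ (it is the measure of maximal entropy) its metric entropy equals the topological entropy $h_\mu = \log d^k = \log d^2$. Plugging $k=2$ and $h_\nu = h_\mu = 2\log d$ into the lower bound of Theorem \ref{mino} gives
\[
\underline d_\mu \ \ge\ \frac{\log d^{\,k-1}}{\lambda_1} + \frac{h_\mu - \log d^{\,k-1}}{\lambda_k}
\ =\ \frac{\log d}{\lambda_1} + \frac{2\log d - \log d}{\lambda_2}
\ =\ \frac{\log d}{\lambda_1} + \frac{\log d}{\lambda_2},
\]
which is exactly the claimed inequality.

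Second, I would make sure the hypotheses of Theorem \ref{mino} are genuinely met: the measure $\mu$ is ergodic (indeed mixing, being the equilibrium measure, see \cite[Section 1.3]{DS}), its Lyapunov exponents are positive as noted, and its entropy $h_\mu = \log d^2$ is finite; these are the only conditions required. I would then simply observe that the arithmetic above is forced: $\log d^{k-1} = \log d$ when $k=2$, and $h_\mu - \log d^{k-1} = \log d^2 - \log d = \log d$.

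There is no real obstacle here — the corollary is a one-line specialization. If anything, the only point worth a sentence is the identification $h_\mu = \log d^2$, which is the statement that the equilibrium measure of a degree $d$ endomorphism of $\Pj^k$ has maximal entropy $k\log d$; this is classical and can be cited from \cite[Section 1.3]{DS} or the references therein. Once that is in hand, substituting into Theorem \ref{mino} yields the corollary, and the comparison with the Binder--DeMarco conjecture $\dim_H(\mu) = \log d/\lambda_1 + \log d/\lambda_2$ shows that this is precisely the conjectured lower bound in dimension two, via Young's inequality $\underline d_\mu \le \dim_H(\mu)$.
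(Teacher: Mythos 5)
Your proposal is correct and is exactly how the corollary follows in the paper: one specializes Theorem \ref{mino} to $\nu=\mu$, $k=2$, using that $\mu$ is ergodic with exponents $\ge \log\sqrt d>0$ and maximal entropy $h_\mu=\log d^2$, so that $\log d^{k-1}=\log d$ and $h_\mu-\log d^{k-1}=\log d$. Nothing further is needed.
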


The proof consists in studying the distribution of inverse branches of $f^n$ in $\Pj^k$ and, in particular, uses an area growth argument which requires a precise description of the geometry of these  branches. This is where the  Distortion Theorem enters into the picture.\\ 

\textsc{Sketch of proof:}
 We first establish an upper bound for the cardinality of inverse branches of $f^n$ in generic balls of radius $e^{-n \lambda_k}$. Let $q_n$ be the entire part of $n \lambda_k / \lambda_1$ and let $B_x^{A}(r) := B_x(r) \cap A$.\\

{\bf Fact }{ \it  For every $\epsi >0$, there exist $\Omega_\epsi
  \subset \Pj^k$ and $r  > 0$ satisfying $\nu(\Omega_\epsi) > 1 - \epsi$ and the following property. Let $\EE_{r} \subset \Pj^k$ be a maximal $r$-separated subset. Then for every $x \in  \Omega_\epsi$ and $n$ large enough, the collection of inverse branches
   \[ \PP_n(x) := \left \{  \, f^{-n}_{\hat y_n} B_p (r) \ , \ y \in  B_x^{\Omega_\epsi} (r e^{-n
  \lambda_k  } )  \ , \ p \in \EE_r  \ , \ d(p,y_n) < r  \,  \right \}    \]
 is well defined and satisfies $\cdb \PP_n(x) \leq  d^{(k-1)(n - q_n)} e^{n\epsi}$.
}\\

We outline the proof of this Fact in three steps. It relies on Area estimates and on our Distortion Theorem. 
 Let $\om$ denotes the Fubini-Study form on $\Pj^k$ and $\eta : \D^{k-1} (2) \to \Pj^k$ be a holomorphic polydisc. The first step is to show that 
\begin{eqnarray} \label{a} \forall m \geq 1 \ , \ \aire \,  f^{m} \circ \eta_{\vert \D^{k-1}} := \int_{\D^{k-1}} (f^m \circ \eta)^* \om^{k-1}  \leq d^{(k-1) m} .
\end{eqnarray}
This is obtained by replacing $\om$ by the Green current $T$ of $f$ (using cohomologous arguments and integration by parts) and then using $f^* T = d T$, see \cite{Di, D1} for more details.

For the second step, let $x \in
\Omega_\epsi$ and denote by  $\LL_n$  the set of polydiscs 
 $L_n :  \D^{k-1} \to B_x (e^{-n \lambda_k})$. Applying (\ref{a}) to $m = n - q_n$ and 
$\eta = f^{q_n} \circ L_n$ yields
\begin{eqnarray}\label{b}
 \aire \, f^n \circ L_n  = \aire  f^{n-q_n} ( f^{q_n} \circ L_n )  \leq d^{(k-1)(n-q_n)} \;\textrm{for every}\; L_n \in \LL_n.
\end{eqnarray}
Note that $q_n$ is chosen to produce polydiscs of uniformly bounded sizes. Indeed, since $\lambda_1$ is the largest exponent and since $q_n \lambda_1  \simeq n \lambda_k$, we have  $$  f^{q_n} (L_n ) \subset f^{q_n} ( B_x
(e^{-n \lambda_k}) ) \subset B_{f^{q_n}(x)} (e^{-n \lambda_k} \cdot
e^{q_n \lambda_1}) \simeq B_{f^{q_n}(x)}(1) .$$ 

The third step is to prove  the existence of $\FF_n \subset
  \LL_n$ whose cardinality is at most $e^{n\epsi}$ and such that
 for every $P_n \in \PP_n(x)$ there exists a $(k-1)$-polydisc $L_n \in \FF_n$ satisfying 
 \begin{eqnarray}\label{c}
    \aire \, f^n \circ  {L_n}_{\vert L_n^{-1}(P_n)}  \geq 1 . 
   \end{eqnarray}
This relies on the geometric description of inverse branches given by our Distortion Theorem: every $P_n \in \PP_n(x)$ is indeed a parallelepiped with  dimensions $e^{-n \lambda_1} \leq \cdots \leq e^{-n \lambda_k}$. Precisely, the polydisc $L_n$ is transverse to the $e^{-n\lambda_k}$-direction of $P_n$ and $\FF_n$ is a collection of hyperplanes parallel to the coordinates axis.

The Fact finally comes from (\ref{b}) and (\ref{c}) since the inverse branches are pairwise disjoint: this gives $\cdb \PP_n(x) \leq  d^{(k-1)(n - q_n)} e^{n\epsi}$ as desired. \\

Let us now explain how the Fact implies Theorem \ref{mino}. We shall ignore the $e^{\pm n \epsilon}$ error terms due to the non-uniform hyperbolic setting. Let $x \in \Omega_\epsi$ and  $\rho_n :=  e^{-n \lambda_k }$. Since $\PP_n(x)$ covers the generic ball $B_x^{\Omega_\epsi}(\rho_n)$ (Fact) and since $\nu ( P ) \simeq e^{-n h_\nu}$ for every $P \in \PP_n(x)$ (Brin-Katok Theorem), we get 
\[  \nu (B_x^{\Omega_\epsi}(\rho_n) )  \leq  \sum_{P \in \PP_n(x)} \nu( P ) \leq  \, \cdb
\PP_n (x) \cdot e^{-n h_\nu }.  \]  
The Density Theorem then implies for $\nu$ almost every $x \in \Omega_\epsi$ and for $n$ large enough: 
\[  \nu \left( B_x(\rho_n) \right) \leq 2 \, \cdb \PP_n (x) \cdot e^{-n h_\nu }. \]
The upper bound on $\cdb \PP_n(x)$ given by the Fact yields:
\begin{equation*} \label{dff}
 \liminf_{r \to 0} \, { \log \nu (B_x(r)) \over  \log r } \, \geq \, 
   { \log d^{k-1} \over \lambda_1 }  + {h_\nu - \log d^{k-1} \over
   \lambda_k}  .
\end{equation*}
This estimate occurs $\nu$-almost everywhere since $\Omega :=\limsup \Omega_{1/q}$ has full $\nu$-measure.

\subsection{Dimension of currents}\label{currents}

In this Subsection we study the thickness of currents $S$ supporting dilating ergodic measures. Precisely, we focus on  a lower bound on their local upper pointwise dimension.
Let $S$ be a positive closed current of bidegree $(1,1)$ on $\Pj^2$ and let
$$\forall x \in \Pj^2 \ , \ \bar{d_S}(x)  := \limsup_{r \to 0} \frac{\log S \wedge \omega (B_x(r))}{\log r} . $$
We have $\bar{d_S}(x) \geq 2$, see \cite[Chapitre 3, \S5]{D}. For every $\Lambda \subset \Pj^2$, we set 
$$ \bar{d_S}(\Lambda) := \sup_{x \in \Lambda} \bar{d_S}(x)  .$$

\begin{thm}[de Th\'elin-Vigny \cite{dTV}]\label{dTV}
Let $f : \Pj^2 \to \Pj^2$ be a holomorphic endomorphism of degree $d \geq 2$.
Let $S$ be a positive closed current on $\Pj^2$ of bidegree $(1,1)$ and mass $1$. Let $\nu$ be an ergodic measure with positive Lyapunov exponents $\lambda_1 \geq \lambda_2$. Assume that $\supp(\nu) \subset \supp (S)$. Let $\Lambda \subset \supp(\nu)$ be a Borel set such that $\nu(\Lambda) > 0$. Then
$$ \bar{d_S}(\Lambda) \geq 2 {\lambda_2 \over \lambda_1}  + {h_\nu - \log d \over \lambda_1} .$$
\end{thm}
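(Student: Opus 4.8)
\textsc{Sketch of proof:} The plan is to adapt the scheme of Theorem \ref{mino}, replacing in the area–counting step the measure $\nu$ by the trace measure $\sigma_S := S\wedge\omega$ and supplying, in place of the missing Koebe theorem, the geometric control provided by Theorem A. Concretely, one estimates from above $\sigma_S\big(B_x(\rho)\big)$ for $\nu$-generic $x$ along a suitable sequence $\rho=\rho_n\to 0$. Since $\bar{d_S}(\Lambda)=\sup_{x\in\Lambda}\bar{d_S}(x)$ and $\nu(\Lambda)>0$, it suffices to show that $\bar{d_S}(x)\ge\beta:=(2\lambda_2+h_\nu-\log d)/\lambda_1$ on a set of full $\nu$-measure, for such a set meets $\Lambda$.

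First fix $\epsilon>0$, apply Theorem A to the ergodic measure $\nu$ (legitimate by the remark following Corollary \ref{Cor}), and choose $\Omega_\epsilon\subset\supp\nu$ with $\nu(\Omega_\epsilon)>1-\epsilon$ over which $h_{\hat x},r_{\hat x}^{-1},\rho_{\hat x}^{-1}$ are bounded and every point is Birkhoff generic. For $\hat x$ lying over $\Omega_\epsilon$ and $n$ large, Theorem A describes $f^{-n}_{\hat x}\big(B_{x_0}(r_{\hat x})\big)$, up to a distortion $e^{O(n\epsilon)}$, as a thin holomorphic ``parallelepiped'' through $x_{-n}$ with one side of size $\approx e^{-n\lambda_1}$ along the fast direction $\varphi_{\hat x,n}^{-1}(L_1)$ and one of size $\approx e^{-n\lambda_2}$ along the slow direction $\varphi_{\hat x,n}^{-1}(L_2)$; it is foliated by holomorphic discs of radius $\approx e^{-n\lambda_2}$ transverse to the fast direction, each carried by $f^n$ onto a single fixed disc of bounded size through $x_0$. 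Since $\supp\nu\subset\supp S$, this fixed disc has $S$-area bounded below by a constant $c_\epsilon>0$ uniformly over $\Omega_\epsilon$, by lower semicontinuity of the $S$-area of a disc.

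Next comes the area–growth input, where $S$ and $\nu$ actually meet. With $q_n:=[n\lambda_2/\lambda_1]$, the map $f^{q_n}$ sends a disc of radius $\approx e^{-n\lambda_2}$ into a ball of bounded radius; then, replacing $\omega$ by the Green current $T_f$ of $f$ (cohomologous arguments and integration by parts, using $f^*T_f=dT_f$) and invoking Bezout on $\Pj^2$, the $S$-area of the image under $f^{\,n-q_n}$ of such a disc is at most $C\,d^{\,n-q_n}$. Together with the lower bound $c_\epsilon$ on the $S$-area of the transverse slices and the pairwise disjointness of the inverse branches, this bounds how many inverse branches of $f^n$ can meet a fixed transverse polydisc, and yields, exactly as in Theorem \ref{mino}, a pairwise disjoint family $\PP_n(x)$ of inverse branches covering $B_x^{\Omega_\epsilon}(e^{-n\lambda_2})$ with $\cdb\PP_n(x)\le d^{\,n-q_n}e^{n\epsilon}$. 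One then bounds $\sigma_S(P)$ for a single piece $P\in\PP_n(x)$ by slicing it along its transverse discs and reusing the growth estimate, while the Brin--Katok theorem for $\nu$ supplies $\nu(P)\asymp e^{-nh_\nu}$ and the mass relation $\|(f^n)^*S\|=d^n$ converts the degree growth into the announced exponent; inserting $q_n\simeq n\lambda_2/\lambda_1$ and these estimates into $\sigma_S\big(B_x^{\Omega_\epsilon}(e^{-n\lambda_2})\big)\le\sum_{P\in\PP_n(x)}\sigma_S(P)$ and passing to $\log/\log$ would give $\bar{d_S}(x)\ge\beta$ for $\nu$-almost every $x$ — provided one can pass from the generic ball $B_x^{\Omega_\epsilon}$ to the genuine ball $B_x$.

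The hard part is precisely this last point, and more generally the fact that $\supp S$ may be strictly, and wildly, larger than $\supp\nu$: unlike for $\nu$ there is no density theorem for $\sigma_S$ at $\nu$-generic points, so discarding the part of $\Pj^2$ lying off $\Omega_\epsilon$ is not automatic, and one must genuinely estimate $\sigma_S$ on each thin inverse branch. This is feasible only because those branches form a regular Vitali-type family (after linear rescaling they are balls, with eccentricities controlled uniformly in $n$), along which every Radon measure — in particular $\sigma_S$ — has almost-everywhere well-defined densities, and because each branch carries honest holomorphic discs transverse to the fast direction of $f$ along $\supp\nu$. Both facts are exactly what the linear normalization of Theorem A delivers and what the non-linear, $n$-dependent Pesin charts would not. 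What remains is the careful bookkeeping of the $e^{\pm n\epsilon}$ errors and of the interplay between $\lambda_1$, $\lambda_2$, $h_\nu$ and $\log d$ in the final $\log/\log$ estimate.
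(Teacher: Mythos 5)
Your proposal transplants the covering--counting scheme of Theorem \ref{mino} (inverse branches $\PP_n(x)$ covering a generic ball, area growth of transverse discs, Brin--Katok per piece), whereas the paper's argument for Theorem \ref{dTV} is a much shorter global mass computation that never needs to control $\sigma_S:=S\wedge\omega$ on individual inverse branches. The paper writes $d^n=\int_{\Pj^2}S\wedge(f^n)^*\omega$ (cohomology), bounds this below by the sum over an $(n,2\eta)$-separated set $\{x_i\}\subset\Lambda$ of cardinality $N_n\simeq e^{nh_\nu}$ of the integrals over the disjoint dynamical balls $B_n(x_i,\eta)$, and uses Theorem A only twice: to get $B_{x_i}(\eta e^{-n\lambda_1})\subset B_n(x_i,\eta)$ and to get $(f^n)^*\omega\ge e^{2n\lambda_2}\omega$ there. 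The lower bound $\sigma_S\bigl(B_{x_i}(\eta e^{-n\lambda_1})\bigr)\ge(\eta e^{-n\lambda_1})^{\bar d_S(\Lambda)}$ is then \emph{immediate from the definition} of the upper pointwise dimension (a $\limsup$ gives a lower bound on the mass of all small balls), and comparing exponential rates finishes. Note that the target inequality is about $\sup_{x\in\Lambda}\bar d_S(x)$, which is exactly what such an averaging argument delivers; no upper bound on $\sigma_S$ of any ball is ever needed.

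This matters because the step you yourself flag as ``the hard part'' is, in my view, a genuine gap and not mere bookkeeping. To prove the pointwise statement $\bar d_S(x)\ge\beta$ $\nu$-a.e.\ by your route you must bound $\sigma_S\bigl(B_x(\rho_n)\bigr)$ from above, but your branches only cover $B_x^{\Omega_\epsi}(\rho_n)$, and $\sigma_S$ of the remainder of the ball is completely uncontrolled: $\sigma_S$ and $\nu$ may be mutually singular, so any Vitali/differentiation theorem for the Radon measure $\sigma_S$ holds $\sigma_S$-almost everywhere, which says nothing at $\nu$-generic points. Two further steps are unjustified: the uniform lower bound $c_\epsi>0$ for the $S$-mass of a fixed transverse disc through a point of $\supp S$ (slicing a positive closed $(1,1)$-current by a disc is exactly the delicate operation the paper's approach is designed to avoid, and there is no general lower bound for the sliced mass), and the choice of radius $\rho_n=e^{-n\lambda_2}$, which is calibrated to an exponent with $\lambda_k$ in the denominator as in Theorem \ref{mino}, while the target here has $\lambda_1$ in both denominators --- consistent with the paper's use of balls of radius $\eta e^{-n\lambda_1}$. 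I would redo the proof along the paper's mass-comparison lines; the ingredients from Theorem A that you correctly identified (the parallelepiped shape of $f^{-n}_{\hat x}(B_{x_0}(r_{\hat x}))$ and the comparison of $d_pf^n$ across a branch) are precisely what justifies $(f^n)^*\omega\ge e^{2n\lambda_2}\omega$ on the dynamical balls.
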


The proof in \cite{dTV} uses delicate slicing arguments to analyse the pullback action of $f^n$ on $\omega$, our Distortion theorem allows to replace these arguments. As before we shall ignore the $e^{\pm n \epsilon}$ error terms due to the non-uniform hyperbolic setting. \\

{\sc Sketch of the proof:} Since $(f^n)^* \omega$ is cohomologous to $d^n \omega$ on $\Pj^2$ we have
\begin{equation}\label{dt1} 
d^n = \int_{\Pj^2} S \wedge (f^n)^* \omega =  \int_{\Pj^2} (f^n)_* S \wedge \omega .
\end{equation}
 Let $\{ x_i \, , \, 1 \leq i \leq N_n \}$ be a $(n,2\eta)$-separated subset of $\Lambda$. Brin-Katok Theorem implies that $N_n \simeq e^{n h_\nu}$. Since the dynamical balls $(B_n(x_i, \eta))_i$ are pairwise disjoint, we get
\begin{equation}\label{dt2}   \int_{\Pj^2} (f^n)_* S \wedge \omega \geq  \sum_{i=1}^{N_n} \int_{\Pj^2} (f^n)_* \left( 1_{B_n(x_i, \eta)} S \right) \wedge \omega =   \sum_{i=1}^{N_n} \int_{B_n(x_i, \eta)} S \wedge (f^n)^* \omega .
\end{equation}
Now we use $B_{x_i} (\eta e^{-n \lambda_1 }) \subset f^{-n}_{{\hat f}^n(\hat x_i)}  (B_{f^n(x_i)}(\eta)) \subset B_n(x_i, \eta)$ and $(f^n)^* \omega \geq e^{2n \lambda_2 } \omega$, which can be proved by using the Distortion Theorem.

Combining this with (\ref{dt1}) and (\ref{dt2}) we obtain
$$ d^n \geq  e^{2n \lambda_2 }  \sum_{i=1}^{N_n}( S\wedge \omega ) ( B_{x_i} (\eta e^{-n \lambda_1}) )  .  $$
From the definition of $\bar{d_S}(x_i)$, and then from the definition of $\bar{d_S}(\Lambda)$ and  $N_n \simeq e^{n h_\nu}$ we get 
$$ d^n  \geq   e^{2n \lambda_2 }   \sum_{i=1}^{N_n}  (\eta e^{-n \lambda_1}) ^{\bar{d_S}(x_i)}    \geq  e^{2n \lambda_2 }   e^{n h_\nu}  (\eta e^{-n \lambda_1}) ^{\bar{d_S}(\Lambda)} . $$
The comparison of the exponential growth rates gives the desired lower bound on ${\bar{d_S}(\Lambda)}$.

\section{Appendix}\label{app}

\subsection{Bundle maps}\label{bundle}

Let us recall that  $\CC$ is the critical set of $f$, that 
$$X = \{ \hat x = (x_n)_{n
  \in \Z} \, \colon \,  x_{n+1}=f(x_n) \ , \ x_n \notin \CC \ , \
\forall n \in \Z \}  $$ 
and that  $\tau : X \to X$ is the right shift sending 
  $(\cdots,x_{-1},x_0,x_1,\cdots)$ to
$(\cdots,x_{-2},x_{-1},x_0,\cdots)$. We denote $$E  := \cup_{\hat x \in X}  \{ \hat x \} \times \C^k  $$
and $E_{\hat x} :=  \{ \hat x \} \times \C^k$. For every positive real number $a >0$ we denote  $E_{\hat x}(a) := \{ \hat x \} \times \D^k(a)$.  
More generally,  for every positive function $a : X \to \R^+_*$, we let $E_{\hat x}(a) := E_{\hat x}(a(\hat x))$ and 
$$E(a) := \cup_{\hat x \in X} E_{\hat x}(a)     =  \cup_{\hat x \in X}  \{ \hat x \} \times \D^k(a (\hat x)) .$$
Let $\si \in \{ \Id_X , \tau \}$ and $a, b : X \to \R^+_*$ be two positive functions. A \emph{bundle map $\KK : E(a) \to E(b)$ over $\si$} is a map of the form 
$$\KK(\hat x,v) = (\si(\hat x) , K_{\hat x}(v)), $$ where 
$$  \ K_{\hat x} : E_{\hat x}(a(\hat x)) \to E_{\si(\hat x)}(b (\si(\hat x)))$$ 
is holomorphic and satisfies  $K_{\hat x}(0)=0$ for every $ \hat x \in X$. 
The \emph{linear tangent} bundle map $d_0\KK : E \to E$ is defined by 
$$d_0\KK (\hat x,v) = (\sigma(\hat x) , d_0 K_{\hat x} (v)). $$ We say that $\KK$ is \emph{tame} if  there exist a $\epsi$-slow function
$r_\epsi$ and a $\epsi$-fast function $s_\epsi$ satisfying $$\KK : E(r_\epsi) \to E(s_\epsi) .$$

\subsection{Results on tame bundle maps}\label{tamebundle}

The following lemma  simply relies on Cauchy's estimates, see \cite[Lemma 2.3]{BDM}. It implies in particular that if $\KK$ is tame with a contracting linear part, then   $\KK : E(a_\epsi) \to E(a_\epsi)$ for some $\epsilon$-slow function $a_\epsi$ and thus $\KK$ can be iterated.  

\begin{lem}\label{NN}
Let $\si \in \{ \Id_X, \tau \}$, let $\epsilon >0$ and let $\KK$ be a tame bundle map over $\si$. Assume that there exist $0< \alpha \leq \beta$ such that  $$ \forall {\hat x} \in X \ \ , \ \ \forall v \in \C^k \ \ , \ \ \alpha \abs{v} \leq \abs{d_0 K_{\hat x}(v)} \leq  \beta \abs{v} .$$ 
Then the following estimates occur.
\begin{enumerate}
\item For every $\kappa > 0$, there exists a $\epsi$-slow function $\phi_\epsi : X \to ]0,1]$ such that 
$$\forall \hat x \in X \ , \   \Lip (K_{\hat x} - d_0 K_{\hat x}) \leq \kappa \textrm{ on  }  E_{\hat x}(\phi_\epsi) .$$

\item For every $\epsi' >0$, there exists a $\epsi$-slow function $\phi_\epsi : X \to ]0,1]$ such that 
$$\forall \hat x \in X \ , \  \forall (u,v) \in E_{\hat x}(\phi_\epsi) \times  E_{\hat x}(\phi_\epsi) \ , \ \alpha e^{-\epsi '} \abs{u-v} \leq
  \abs{K_{\hat x}(u) - K_{\hat x}(v)} \leq \beta e^{\epsi '} \abs{u-v} . $$

In particular, if $\beta < 1$ (contracting case) and if $\beta e^{\epsi '} \leq e^{-\epsi}$, then 
$$\KK : E(\psi_\epsi) \to E(\psi_\epsi) $$ 
for every $\epsilon$-slow function $\psi_\epsi$ satisfying $\psi_\epsi \leq \phi_\epsi$.

\end{enumerate}
\end{lem}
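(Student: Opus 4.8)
The plan is to derive both items from a single estimate: a Cauchy--Schwarz bound measuring how far $K_{\hat x}$ deviates from its linear part on small balls, together with a careful choice of the radius function so that it stays slow. This is the argument of \cite[Lemma 2.3]{BDM}. First I would use tameness: since $\KK$ is tame I take an $(\epsi/3)$-slow function $r_\epsi \le 1$ and an $(\epsi/3)$-fast function $s_\epsi \ge 1$ with $\KK : E(r_\epsi) \to E(s_\epsi)$ (working with parameter $\epsi/3$ rather than $\epsi$ is what will make the output exactly $\epsi$-slow). Set $g_{\hat x} := K_{\hat x} - d_0 K_{\hat x}$: it is holomorphic on $\D^k(r_\epsi(\hat x))$, vanishes at $0$ together with its differential, and is bounded there by $(1+\beta)\,s_\epsi(\hat x)$, since $K_{\hat x}(\D^k(r_\epsi(\hat x))) \subset \D^k(s_\epsi(\hat x))$, $\abs{d_0 K_{\hat x}(v)} \le \beta \abs v$ and $r_\epsi \le 1 \le s_\epsi$. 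Cauchy's inequality on $\D^k(r_\epsi(\hat x)/2)$ followed by the Schwarz lemma applied to the matrix-valued map $d g_{\hat x}$, which vanishes at the origin, then produces a constant $C = C(k,\beta)$ such that, for every $\hat x \in X$ and all $u,v \in \D^k(t)$ with $t \le r_\epsi(\hat x)/2$,
\[ \abs{g_{\hat x}(u) - g_{\hat x}(v)} \;\le\; C\,\frac{s_\epsi(\hat x)}{r_\epsi(\hat x)^2}\, t \;\abs{u-v}. \]

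For Item 1, given $\kappa>0$, I would simply set $\phi_\epsi := \min\{\, 1\,,\ \tfrac12 r_\epsi\,,\ \tfrac{\kappa}{C}\, r_\epsi^2/s_\epsi \,\}$. The displayed bound, evaluated at $t = \phi_\epsi(\hat x) \le r_\epsi(\hat x)/2$, gives $\Lip(K_{\hat x} - d_0 K_{\hat x}) \le \kappa$ on $E_{\hat x}(\phi_\epsi)$. That $\phi_\epsi$ is $\epsi$-slow is the point to check: $\tfrac12 r_\epsi$ is $(\epsi/3)$-slow, and $r_\epsi^2/s_\epsi$ is the product of the $(2\epsi/3)$-slow function $r_\epsi^2$ with the $(\epsi/3)$-slow function $1/s_\epsi$, hence $\epsi$-slow; a minimum of $\epsi$-slow functions and constants is again $\epsi$-slow.

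Item 2 follows from Item 1 by the mean value inequality on the convex set $\D^k(\phi_\epsi(\hat x))$. Writing $K_{\hat x}(u) - K_{\hat x}(v) = d_0 K_{\hat x}(u-v) + \big(g_{\hat x}(u) - g_{\hat x}(v)\big)$ and combining $\alpha\abs{u-v} \le \abs{d_0 K_{\hat x}(u-v)} \le \beta\abs{u-v}$ with $\abs{g_{\hat x}(u) - g_{\hat x}(v)} \le \kappa\abs{u-v}$ yields $(\alpha-\kappa)\abs{u-v} \le \abs{K_{\hat x}(u) - K_{\hat x}(v)} \le (\beta+\kappa)\abs{u-v}$; so I apply Item 1 with $\kappa := \min\{\alpha(1-e^{-\epsi'}),\, \beta(e^{\epsi'}-1)\} > 0$ to get the claimed two-sided estimate on $E_{\hat x}(\phi_\epsi)$. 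For the final assertion, assume $\beta<1$ and $\beta e^{\epsi'} \le e^{-\epsi}$ and let $\psi_\epsi \le \phi_\epsi$ be $\epsi$-slow: for $(\hat x,v) \in E(\psi_\epsi)$ one has $\abs{K_{\hat x}(v)} = \abs{K_{\hat x}(v) - K_{\hat x}(0)} \le \beta e^{\epsi'}\abs v \le e^{-\epsi}\abs v < e^{-\epsi}\psi_\epsi(\hat x) \le \psi_\epsi(\si(\hat x))$, the last step because $\psi_\epsi$ is $\epsi$-slow and $\si \in \{\Id_X,\tau\}$; hence $\KK(\hat x,v) = (\si(\hat x), K_{\hat x}(v)) \in E(\psi_\epsi)$, that is, $\KK : E(\psi_\epsi) \to E(\psi_\epsi)$.

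The main --- essentially the only --- obstacle is the slowness bookkeeping: the radius function $\phi_\epsi$ is built from the square of a slow function divided by a fast one, which a priori varies by a factor $e^{\pm 3\epsi}$ per step, so one must feed in tameness data with the sharper parameter $\epsi/3$ to land on an honestly $\epsi$-slow $\phi_\epsi$. Everything else is the standard Cauchy/Schwarz comparison of a holomorphic map to its linear part, carried out uniformly in $\hat x$ thanks to tameness, exactly as in \cite[Lemma 2.3]{BDM}.
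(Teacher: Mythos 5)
Your proof is correct and follows essentially the same route as the paper's: invoke tameness with the sharpened parameter $\epsi/3$, use Cauchy's estimates to bound the deviation of $K_{\hat x}$ from $d_0K_{\hat x}$ by $(\textrm{const})\cdot s\, t/r^2$ on $\D^k(t)$, take $\phi_\epsi$ proportional to $r^2/s$, and deduce Item 2 and the invariance statement by the triangle inequality. The only (harmless) slip is that the bound on $K_{\hat x}$ should involve $s_{\epsi}(\si(\hat x))$ rather than $s_\epsi(\hat x)$, which changes nothing since the two differ by at most $e^{\epsi/3}$.
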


\begin{proof}
Let $\epsilon, \epsilon' , \kappa >0$. Since $\KK$ is tame, there exist a $\epsi/3$-slow (resp. fast) function $r_{\epsi/3}$ (resp. $s_{\epsi/3})$  such that $\KK : E(r_{\epsi/3}) \to E(s_{\epsi/3})$. Let $\hat x \in X$. Cauchy's
estimates on $E_{\hat x}({1 \over 2} r_{\epsi/3})$ bound the second derivatives of $K_{\hat x}$ by $c s_{\epsi/3}(\si(\hat x)) /r_{\epsi/3}(\hat x)^2$ where the  constant $c$ only depends  on the dimension $k$. We deduce that for every $\rho \leq {1 \over 2} r_{\epsi/3}(\hat x)$:
\begin{equation}\label{aze}
 \forall t \in E_{\hat x}(\rho) \ , \  \abs{d_t (d_0 K_{\hat x}  - K_{\hat x})} = \abs{d_0 K_{\hat x} - d_t K_{\hat x}} \leq   {  c s_{\epsi/3}(\si(\hat x)) \over r_{\epsi/3}(\hat x)^2} \rho.
\end{equation}
Let us define
\[ \phi_\epsi :=  {r_{\epsi/3}^2 \over c  s_{\epsi/3}\circ \si}  \min \{ (e^{\epsi'} - 1) \beta \, , \, (1-e^{-\epsi'}) \alpha \, , \, \kappa   \} , \]
which is a $\epsi$-slow function. Item 1 is then a consequence of (\ref{aze}). To verify Item 2 we put $\rho = \phi_\epsi$ in (\ref{aze}) to obtain the following estimates on $E_{\hat x}(\phi_\epsi)$: 
\begin{displaymath}
\begin{array}{rcl}
   \abs{K_{\hat x}(u) - K_{\hat x}(v)}   & \leq  & \abs{ d_0 K_{\hat x} (u) - d_0 K_{\hat x} (v) } + \abs {(d_0 K_{\hat x} 
  - K_{\hat x})(u) - (d_0 K_{\hat x}  - K_{\hat x})(v)}    \\
                   &   \leq   &  \beta \abs{u-v} +  {c s_{\epsi/3}(\si(\hat x)) \over r_{\epsi/3}(\hat x)^2}
  \phi_\epsi(\hat x) \abs{u-v} \\
                   & \leq   &  \beta e^{\epsi'}  \abs{u-v}. 
\end{array}                                      
\end{displaymath} 
We obtain similarly $\abs {K_{\hat x}(u) - K_{\hat x}(v)} \geq \alpha e^{-\epsi'}
\abs{u-v}$. If $\beta e^{\epsi'} \leq e^{-\epsi}$ and if  $\psi_\epsi$ is a $\epsilon$-slow function satisfying $\psi_\epsi \leq \phi_\epsi$  then $\abs {K_{\hat x}(u)} \leq \beta e^{\epsi'} \psi_\epsi(\hat x) \leq e^{-\epsi} \psi_\epsi(\hat x) \leq  \psi_\epsi(\si(\hat x))$ on $E(\psi_\epsi)$. \fin
 \end{proof} 
 
The next lemma is useful for conjugating bundle maps, it is a corollary of Lemma \ref{NN}. 

\begin{lem} \label{NNbis}
Let $\MM$ be a tame bundle map over $\Id_X$ and let $\LL$ be a tame bundle map over $\tau$.
We assume that $d_0 \MM = (\Id_X , \Id_{\C^k})$ and that there exist $0< \alpha \leq \beta < 1$ such that 
$$ \forall {\hat x} \in X \ \ , \ \ \forall v \in \C^k \ \ , \ \ \alpha \abs{v} \leq \abs{d_0 L_{\hat x}(v)} \leq  \beta \abs{v} .$$ 
Let $0 < \epsi' < \epsi$ such that $\beta e^{\epsi'} < e^{-\epsi}$. Then there exists a $\epsilon$-slow function $\phi_\epsi$ such that for every $\epsilon$-slow function $\psi_\epsi \leq \phi_\epsi$,

\begin{enumerate}
\item the bundle map $\tilde \LL := \MM \circ \LL \circ \MM^{-1}$ is well defined on $E(e^\epsi \psi_\epsi)$.
\item the following diagram commutes:
\[  \xymatrix{
  E(\psi_\epsi) \ar[rrr]^{ \LL  } \ar[d]_{\MM}& & & E(\psi_\epsi)   \ar[d]_{\MM}  \\
    E(e^\epsi \psi_\epsi)  \ar[rrr]^{ \tilde \LL }  & & & E(e^\epsi \psi_\epsi)  . }  \]
 \item $\forall \hat x \in X$, $\forall (u,v) \in E_{\hat x}(\psi_\epsi) \times  E_{\hat x}(\psi_\epsi)$, $e^{-\epsi '} \abs{u-v} \leq
  \abs{M_{\hat x}(u) - M_{\hat x}(v)} \leq e^{\epsi '} \abs{u-v}$ .

\end{enumerate}
\end{lem}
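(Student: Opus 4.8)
The plan is to derive this corollary directly from Lemma \ref{NN}, applied separately to the two tame bundle maps $\MM$ and $\LL$. First I would fix the exponents $\epsi$, $\epsi'$ with $\beta e^{\epsi'} < e^{-\epsi}$, and record that $\alpha \le \beta < 1$. The key point is that $\MM$ has the property that $d_0 \MM$ is the identity on each fibre, so Item 1 of Lemma \ref{NN} (applied with linear bounds $\alpha = \beta = 1$) produces, for any prescribed Lipschitz threshold $\kappa < 1$, a $\epsi$-slow function on whose associated tube $E(\cdot)$ the map $M_{\hat x} - \Id$ is $\kappa$-Lipschitz; in particular $M_{\hat x}$ is then bi-Lipschitz with constants $e^{\pm \epsi'}$ after shrinking (this is exactly Item 3 and follows from Item 2 of Lemma \ref{NN} applied to $\MM$). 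Since $M_{\hat x}$ is injective with $d_0 M_{\hat x} = \Id$, the inverse $M_{\hat x}^{-1}$ exists on a ball of comparable (again $\epsi$-slow) radius and is itself $e^{\pm\epsi'}$-bi-Lipschitz; I would fold this into a single $\epsi$-slow function $\phi_\epsi^{(1)}$ controlling $\MM$, its inverse and their distortion.

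Next I would apply Lemma \ref{NN} to $\LL$, whose linear part satisfies $\alpha \abs v \le \abs{d_0 L_{\hat x}(v)} \le \beta\abs v$ with $\beta e^{\epsi'} \le e^{-\epsi}$: this gives a $\epsi$-slow function $\phi_\epsi^{(2)}$ such that $\LL : E(\psi_\epsi) \to E(\psi_\epsi)$ for every $\epsi$-slow $\psi_\epsi \le \phi_\epsi^{(2)}$, with $\Lip L_{\hat x} \le e^{-\epsi}$ on that tube. Now set $\phi_\epsi := \min\{\phi_\epsi^{(1)}, \phi_\epsi^{(2)}, \dots\}$, the $\min$ also incorporating whatever further shrinking is needed so that the composition $\MM \circ \LL \circ \MM^{-1}$ makes sense. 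Concretely, fix any $\epsi$-slow $\psi_\epsi \le \phi_\epsi$. Starting from $E(e^\epsi \psi_\epsi)$, the map $\MM^{-1}$ (being $e^{\epsi'}$-bi-Lipschitz with $\epsi' < \epsi$) sends it into $E(\psi_\epsi)$ up to the slowness factor, $\LL$ maps $E(\psi_\epsi)$ into $E(\psi_\epsi)$ over $\tau$, and $\MM$ expands by at most $e^{\epsi'} < e^\epsi$, landing back in $E(e^\epsi\psi_\epsi)$ over $\tau$. This gives Item 1. Item 2, the commutation of the square, is then a formal identity $\MM \circ \LL = (\MM \circ \LL \circ \MM^{-1}) \circ \MM$ on the domain where every term is defined, which is precisely $E(\psi_\epsi)$ by the inclusions just established. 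Item 3 is the bi-Lipschitz estimate on $\MM$ already produced above, restricted from $E(\phi_\epsi)$ to the smaller tube $E(\psi_\epsi)$.

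The main obstacle, and the only place requiring care rather than bookkeeping, is the bound on $M_{\hat x}^{-1}$: Lemma \ref{NN} as stated controls $K_{\hat x}$ itself, not its inverse, so I must argue that when $d_0 M_{\hat x} = \Id$ and $\Lip(M_{\hat x} - \Id) \le \kappa$ with $\kappa$ small, the inverse exists on a definite ball and inherits comparable Lipschitz bounds — a quantitative inverse function argument, uniform in $\hat x$ because all the relevant radii are $\epsi$-slow. Once the radius of definition of $M_{\hat x}^{-1}$ is tracked as a $\epsi$-slow function, everything else is a matter of choosing the $\min$ defining $\phi_\epsi$ large enough to absorb the finitely many slowness factors $e^{\pm\epsi}$, $e^{\pm\epsi'}$ that appear when chasing a point around the diagram. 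I would present this inverse estimate first, then assemble $\phi_\epsi$, then verify the three items in the order $1, 3, 2$, since Item 2 is purely formal once the domains in Item 1 are pinned down.
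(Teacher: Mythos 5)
Your proposal is correct and follows essentially the same route as the paper: apply Lemma \ref{NN} separately to $\LL$ (to get $\Lip L_{\hat x}\le \beta e^{\epsi'}\le e^{-\epsi}$, hence $\LL:E(\phi^1_\epsi)\to E(\phi^1_\epsi)$) and to $\MM$ (to get the $e^{\pm\epsi'}$ bi-Lipschitz bounds of Item 3), take the minimum of the resulting slow functions, and chase domains around the square; the paper settles the well-definedness of $\tilde\LL$ on the larger tube $E(e^{\epsi}\psi_\epsi)$ by one further application of Lemma \ref{NN} to $\tilde\LL$ itself (whose linear part equals $d_0\LL$ and is contracting), which is the same bookkeeping you propose to absorb into the $\min$. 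Your explicit attention to the domain and Lipschitz control of $M_{\hat x}^{-1}$ is warranted — the paper uses $\MM^{-1}$ without comment — and the quantitative inverse-function argument you sketch is exactly what fills that (routine) step.
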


\begin{proof}
Let $0 < \epsi' < \epsi$ such that $\beta e^{\epsi'} < e^{-\epsi}$. Let $\phi^1_\epsi$ be a $\epsi/3$-slow function provided by Item 2 of Lemma \ref{NN} such that 
$$\forall (u,v) \in E_{\hat x}(\phi^1_\epsi) \times E_{\hat x}(\phi^1_\epsi) \ , \ 
  \abs{L_{\hat x}(u) - L_{\hat x}(v)} \leq \beta e^{\epsi '} \abs{u-v} \leq e^{- \epsi} \abs{u-v} . $$
In particular we have $\LL  : E(\phi^1_\epsi) \to E(\phi^1_\epsi)$.  
Let $\phi^2_\epsi$ be a $\epsi/3$-slow function provided by the same lemma such that  
$$\forall (u,v) \in E_{\hat x}(\phi^2_\epsi) \times \in E_{\hat x}(\phi^2_\epsi) \ , \ e^{-\epsi '} \abs{u-v} \leq
  \abs{M_{\hat x}(u) - M_{\hat x}(v)} \leq e^{\epsi '} \abs{u-v} , $$
hence we have  $\MM  : E(\phi^2_\epsi) \to E(e^\epsi \phi^2_\epsi)$.
We set $\phi_\epsi := \min \{ \phi^1_\epsi, \phi^2_\epsi \}$, which is a $\epsi$-slow function.
Then $\tilde \LL := \MM \circ \LL \circ \MM^{-1}$ is well defined on $E(e^{-\epsi}\phi_\epsi)$ and takes its values in $E(e^{\epsi}\phi_\epsi)$. Since $d_0 \tilde \LL = d_0 \LL$ is contracting, we can replace $\phi_\epsi$ by a smaller $\epsi$-slow function to have $\tilde \LL : E(e^{\epsi}\phi_\epsi) \to E(e^{\epsi}\phi_\epsi)$. All these properties obviously hold for every $\epsilon$-slow function $\psi_\epsi \leq \phi_\epsi$.  \finsec
\end{proof}

\subsection{Constraints on $\gamma$, $\epsilon$ and cancellation of resonances}\label{gamma}

We use here the notations introduced in Subsection \ref{RC}. Let us recall that  $0 < a < \ln 4$ is fixed such that
\begin{equation}\label{rol2} 
 \al_1\lambda_1 +\cdots +\al_k \lambda_k  - \Lambda_j  \notin [-a,a] 
\end{equation}
holds for every $j \in \{ 1 , \cdots , l \}$ and for every $\al \in \N^k\setminus \RRR_j$  satisfying $2 \leq \abs{\al} \leq [2 \Lambda_1 / \Lambda_l]$. Let us  also recall the Constraints \ref{nota}.

\begin{itemize} 
\item[1.]  The number  $\gamma >0$ is fixed and sufficently small so that:

$\gamma < \Lambda_l / 2 \ \ , \ \ \gamma ( [ \Lambda^\gamma_1 / \Lambda^\gamma_l ] -1)< a/2 \ \ \textrm{ and }  \ Ê\   4 \gamma (\Lambda^\gamma_1 / \Lambda^\gamma_l +1) \leq \Lambda_l^\gamma .$ 

\item[2.]   Any choice of $\epsilon >0$ is supposed to be small enough so that:

$2\epsilon < \gamma \ \ , \ \  4\epsilon + 2\gamma  < \Lambda_l \ \ \textrm{ and }  \ Ê\    \epsi  ( [\Lambda^\gamma_1 / \Lambda^\gamma_l] + 3 )  < {1 \over 2 } \min \{ \gamma , a  \} =: b . $
\end{itemize}
We now prove two elementary results.

\begin{lem}\label{donne} $ $
\begin{enumerate}
\item For every  $j \in \{ 1 , \cdots , l \}$ and every $\al \in \N^k \setminus\RRR_j$ such that $2 \leq \abs{\al} \leq [\Lambda^\gamma_1 / \Lambda^\gamma_l]$, we have
$$ \al_1\lambda_1 +\cdots +\al_k \lambda_k  - \Lambda_j  \notin [-a,a] . $$  
\item If $q := [\Lambda^\gamma_1 / \Lambda^\gamma_l] +1$,  $m := e^{-\Lambda^\gamma_1 - \epsilon + \gamma}$ et $M  := e^{-\Lambda^\gamma_l + \epsilon + \gamma}$, then  $(M e^{2\epsi})^{q+1} < m e^{-\epsilon}$.
\end{enumerate}
\end{lem}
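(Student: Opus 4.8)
The plan is to verify the two inequalities in Lemma \ref{donne} directly from the Constraints \ref{nota}, treating them as elementary arithmetic consequences of the stated bounds on $\gamma$ and $\epsilon$.

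\textbf{Item 1.} First I would observe that, since $\Lambda_l^\gamma = \Lambda_l - \gamma > \Lambda_l/2$ by the constraint $\gamma < \Lambda_l/2$, and $\Lambda_1^\gamma = \Lambda_1 - \gamma < \Lambda_1$, one has $[\Lambda_1^\gamma/\Lambda_l^\gamma] \le [2\Lambda_1/\Lambda_l]$. Hence any $\al$ with $2 \le \abs\al \le [\Lambda_1^\gamma/\Lambda_l^\gamma]$ also satisfies $2 \le \abs\al \le [2\Lambda_1/\Lambda_l]$, so the defining property (\ref{rol2}) of $a$ applies verbatim and gives $\al_1\lambda_1 + \cdots + \al_k\lambda_k - \Lambda_j \notin [-a,a]$. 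This is the whole of Item 1; the only mildly delicate point is the comparison of the two integer parts, which follows from monotonicity of $x \mapsto [x]$ together with $\Lambda_1^\gamma < \Lambda_1 \le 2\Lambda_1$ and $\Lambda_l^\gamma > \Lambda_l/2$.

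\textbf{Item 2.} Here I would simply take logarithms. With $q+1 = [\Lambda_1^\gamma/\Lambda_l^\gamma] + 2$, the inequality $(Me^{2\epsi})^{q+1} < me^{-\epsilon}$ is equivalent to
\[ (q+1)\bigl(-\Lambda_l^\gamma + \epsilon + \gamma + 2\epsilon\bigr) < -\Lambda_1^\gamma - \epsilon + \gamma - \epsilon, \]
that is, after rearranging,
\[ (q+1)\,\Lambda_l^\gamma - \Lambda_1^\gamma \;>\; 3(q+1)\epsilon + 2\epsilon + (q+1)\gamma - \gamma . \]
Writing $\Lambda_1^\gamma = [\Lambda_1^\gamma/\Lambda_l^\gamma]\,\Lambda_l^\gamma + \{\Lambda_1^\gamma/\Lambda_l^\gamma\}\,\Lambda_l^\gamma < ([\Lambda_1^\gamma/\Lambda_l^\gamma]+1)\Lambda_l^\gamma = (q)\Lambda_l^\gamma$, the left side exceeds $\Lambda_l^\gamma$. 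So it suffices to check $\Lambda_l^\gamma \ge 3(q+1)\epsilon + 2\epsilon + q\gamma$, which I would split: the constraint $4\gamma(\Lambda_1^\gamma/\Lambda_l^\gamma + 1) \le \Lambda_l^\gamma$ controls the $q\gamma = ([\Lambda_1^\gamma/\Lambda_l^\gamma]+1)\gamma$ term with room to spare, and the constraint $\epsi([\Lambda_1^\gamma/\Lambda_l^\gamma]+3) < b \le \gamma/2$ controls the $\epsilon$-terms (noting $3(q+1)+2 = 3[\Lambda_1^\gamma/\Lambda_l^\gamma]+11$ is a bounded multiple of $[\Lambda_1^\gamma/\Lambda_l^\gamma]+3$, and $\gamma$ itself is absorbed by the same $4\gamma(\cdots)\le\Lambda_l^\gamma$ inequality). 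Combining these with a little slack finishes the estimate.

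\textbf{Main obstacle.} The computation is routine, but the one place requiring care is bookkeeping the numerical constants so that the $\epsilon$-terms genuinely fit inside the gap left over after the $\gamma$-terms are subtracted from $\Lambda_l^\gamma$; I would use the generous factor $4$ in the constraint $4\gamma(\Lambda_1^\gamma/\Lambda_l^\gamma+1)\le\Lambda_l^\gamma$ to leave at least $\tfrac34\Lambda_l^\gamma$ available, and then invoke $2\epsilon < \gamma$ together with $\epsi([\Lambda_1^\gamma/\Lambda_l^\gamma]+3)<b$ to bound all $\epsilon$-contributions by a small multiple of $\gamma$, hence comfortably by $\tfrac34\Lambda_l^\gamma$. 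No genuinely hard step is involved; the content of the lemma is precisely that Constraints \ref{nota} were chosen to make these two inequalities hold.
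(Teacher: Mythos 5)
Your proof is correct and follows essentially the same route as the paper's: Item 1 is the identical observation that $\gamma<\Lambda_l/2$ gives $[\Lambda^\gamma_1/\Lambda^\gamma_l]\le[2\Lambda_1/\Lambda_l]$ so that (\ref{rol2}) applies, and Item 2 is the same logarithmic verification from Constraints \ref{nota} (the paper packages it as one chain after bounding $3\epsilon+\gamma$ by $4\gamma$, while you split the $\gamma$- and $\epsilon$-terms, but the content is identical). The only blemish is the arithmetic slip $3(q+1)+2=3[\Lambda^\gamma_1/\Lambda^\gamma_l]+8$, not $+11$; it is harmless since the slack you invoke absorbs either count.
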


\begin{proof}
The first statement immediately follows from (\ref{rol2}) after observing that $\gamma < \Lambda_l / 2$  yields $[ \Lambda^\gamma_1 / \Lambda^\gamma_l ] \leq [2 \Lambda_1 / \Lambda_l]$. For the second statement, one has
$$ ( - \Lambda^\gamma_l + 3\epsilon + \gamma ) ( [\Lambda^\gamma_1 / \Lambda^\gamma_l] +2) \leq  ( - \Lambda^\gamma_l + 3 \epsilon + \gamma)  (\Lambda^\gamma_1 / \Lambda^\gamma_l +1) = - \Lambda_1^\gamma - \big\{ \Lambda_l^\gamma  -  (3\epsi + \gamma) (\Lambda^\gamma_1 / \Lambda^\gamma_l +1)   \big\}  $$
and  $\big\{\;\;\big\}\geq \Lambda_l^\gamma  -  4\gamma (\Lambda^\gamma_1 / \Lambda^\gamma_l +1)   \geq 0 >  2\epsilon - \gamma$. \fin 
\end{proof}

\begin{lem}\label{marge}
For every $j \in \{ 1 , \cdots , l \}$ and every  $2 \leq \abs{\al} \leq [\Lambda^\gamma_1 / \Lambda^\gamma_l]$, one has
$$ \al_1 \lambda^\gamma_1 +\cdots +\al_k \lambda^\gamma_k - \Lambda^\gamma_j  \notin [-b , b] . $$ 
In particular, the $\lambda_i^\gamma$'s do not satisfy any resonant relation: $\cup_{j=1}^l  \RRR_j^\gamma = \emptyset$.
 \end{lem}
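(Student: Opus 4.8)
The plan is to reduce the shifted spectrum to the original one by the elementary identity
\[ \al_1\lambda^\gamma_1 +\cdots +\al_k \lambda^\gamma_k - \Lambda^\gamma_j = \big(\al_1\lambda_1 +\cdots +\al_k \lambda_k - \Lambda_j\big) - (\abs{\al}-1)\gamma , \]
valid because $\lambda_i^\gamma = \lambda_i - \gamma$ and $\Lambda_j^\gamma = \Lambda_j - \gamma$. Writing $s := \al_1\lambda_1 +\cdots +\al_k \lambda_k - \Lambda_j$ and $m := \abs{\al}-1$, the quantity to control is $s - m\gamma$, with $1 \le m \le [\Lambda_1^\gamma/\Lambda_l^\gamma]-1$; I will also use repeatedly that $b = \tfrac12\min\{\gamma,a\} \le \tfrac{\gamma}{2} < \gamma$.

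First I would dispose of the resonant case $\al \in \RRR_j$: then $s = 0$ by definition of $\RRR_j$, so $s - m\gamma = -m\gamma \le -\gamma < -b$, which already lies outside $[-b,b]$. For the non-resonant case $\al \notin \RRR_j$, the hypothesis $2 \le \abs{\al} \le [\Lambda_1^\gamma/\Lambda_l^\gamma]$ lets me apply Item 1 of Lemma \ref{donne}, which gives $\abs{s} > a$. If $s > a$, then Constraints \ref{nota} give $m\gamma \le \gamma([\Lambda_1^\gamma/\Lambda_l^\gamma]-1) < a/2$, hence $s - m\gamma > a - a/2 = a/2 \ge b$; if $s < -a$, then $s - m\gamma < -a < -b$. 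In either case $s - m\gamma \notin [-b,b]$, which is the first assertion.

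For the "in particular" statement, suppose $\al \in \RRR^\gamma_j$, that is $\abs{\al} \ge 2$ and $\al_1\lambda^\gamma_1 +\cdots +\al_k \lambda^\gamma_k = \Lambda^\gamma_j$. Since $\lambda_i \ge \Lambda_l$ and $\gamma < \Lambda_l/2$, each $\lambda_i^\gamma \ge \Lambda_l^\gamma > 0$, so $\abs{\al}\,\Lambda_l^\gamma \le \Lambda_j^\gamma \le \Lambda_1^\gamma$, whence $\abs{\al} \le [\Lambda_1^\gamma/\Lambda_l^\gamma]$. But then the first part of the lemma forbids $\al_1\lambda^\gamma_1 +\cdots +\al_k \lambda^\gamma_k - \Lambda^\gamma_j = 0$, a contradiction; hence $\RRR^\gamma_j = \emptyset$ for every $j$.

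All the estimates are elementary, so there is no genuine obstacle; the only point deserving care is the non-resonant case with $s > 0$, where one must invoke precisely the constraint $\gamma([\Lambda_1^\gamma/\Lambda_l^\gamma]-1) < a/2$ in order to absorb the perturbation $m\gamma$ produced by the spectral shift, together with the right sign bookkeeping so as not to lose the margin needed to beat $b$.
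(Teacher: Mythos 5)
Your argument is correct and coincides with the paper's own proof: the same identity $\al\cdot\lambda^\gamma - \Lambda_j^\gamma = (\al\cdot\lambda - \Lambda_j) - (\abs{\al}-1)\gamma$, the same case split between $\al\in\RRR_j$ and $\al\notin\RRR_j$ via Lemma \ref{donne}, and the same use of the constraint $\gamma([\Lambda_1^\gamma/\Lambda_l^\gamma]-1)<a/2$. Your explicit verification that any $\al\in\RRR_j^\gamma$ automatically satisfies $\abs{\al}\le[\Lambda_1^\gamma/\Lambda_l^\gamma]$ is a small but welcome addition that the paper leaves implicit in its ``in particular''.
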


\begin{proof}
Let us fix $j \in \{ 1 , \cdots , l \}$ et $\alpha \in \N^k$ such that $2 \leq \abs{\al} \leq [\Lambda^\gamma_1 / \Lambda^\gamma_l]$. We have
$$ \al_1 \lambda^\gamma_1 +\cdots +\al_k \lambda^\gamma_k - \Lambda^\gamma_j = \alpha \cdot \lambda - \gamma \vert \alpha \vert - (\Lambda_j - \gamma) = \alpha \cdot \lambda - \Lambda_j - \gamma (\vert \alpha \vert -1) . $$
Assume first that $\al \in \RRR_j$. Since $\alpha \cdot \lambda - \Lambda_j = 0$ and $\vert \alpha \vert \geq 2$, one has 
$$ \al_1 \lambda^\gamma_1 +\cdots +\al_k \lambda^\gamma_k - \Lambda^\gamma_j  =  - \gamma (\vert \alpha \vert -1) \leq - \gamma < - b.$$
Let us now assume that  $\al \notin \RRR_j$. We use here the first assertion of Lemma \ref{donne}.\\
 If $\alpha \cdot \lambda - \Lambda_j > a$ then,  as $\abs{\al} \leq [\Lambda^\gamma_1 / \Lambda^\gamma_l]$, one gets
$$ \al_1 \lambda^\gamma_1 +\cdots +\al_k \lambda^\gamma_k - \Lambda^\gamma_j > a - \gamma  ([\Lambda^\gamma_1 / \Lambda^\gamma_l] - 1) > a/2 \geq b . $$
 If  $\alpha \cdot \lambda - \Lambda_j < -a$ then, as  $\abs{\al} \geq 2$, one gets
$$ \al_1 \lambda^\gamma_1 +\cdots +\al_k \lambda^\gamma_k - \Lambda^\gamma_j <  - a -\gamma (\vert \alpha \vert -1) \leq -a < - b . $$
This completes the proof of the lemma. \finsec
\end{proof}

\vspace{1 cm}

{\footnotesize F. Berteloot}\\
{\footnotesize Universit\'e Toulouse 3}\\
{\footnotesize Institut Mathématique de Toulouse}\\
{\footnotesize Equipe Emile Picard, B\^at. 1R2}\\
{\footnotesize 118, route de Narbonne }\\
{\footnotesize F-31062 Toulouse Cedex 9, France}\\
{\footnotesize berteloo@picard.ups-tlse.fr}\\ 

{\footnotesize C. Dupont}\\
{\footnotesize Universit\'e de Rennes 1}\\
{\footnotesize IRMAR, CNRS UMR 6625}\\
{\footnotesize Campus de Beaulieu, B\^at. 22-23}\\
{\footnotesize F-35042 Rennes Cedex, France}\\
{\footnotesize christophe.dupont@univ-rennes1.fr}\\

\end{document}